\newcommand\BibTeX{{\rmfamily B\kern-.05em \textsc{i\kern-.025em b}\kern-.08em
T\kern-.1667em\lower.7ex\hbox{E}\kern-.125emX}}
\newcommand{\dt}{\Delta t}
\newcommand{\dx}{\Delta x}
\newcommand{\F}{\mathcal{F}}
\newcommand{\G}{\mathcal{G}}
\newcommand\footnoteref[1]{\protected@xdef\@thefnmark{\ref{#1}}\@footnotemark}
\newcommand{\ha}{\frac{1}{2}}
\newcommand{\bi}{l}
\newcommand{\bj}{k}
\def\be{\begin{equation}}
\def\ee{\end{equation}}
\newcommand{\ei}[0]{\end{itemize}}
\newcommand{\beann}[0]{\begin{eqnarray*}}
\newcommand{\eeann}[0]{\end{eqnarray*}}
\def\bea{\begin{eqnarray}}
\def\eea{\end{eqnarray}}
\def\ba{\begin{array}{l}\displaystyle}
\def\ea{\end{array}}
\renewcommand{\epsilon}{\varepsilon }
\newfont{\numerikEleven}{ecrm1000}
\newfont{\numerikTen}{cmss10}
\newfont{\numerikNine}{cmss9}
\newfont{\numerikEight}{cmss8}
\newfont{\numerikSeven}{cmss7}
\newfont{\numerikSix}{cmss6}
\newtheorem{theorem}{Theorem}[section]
\newtheorem{algo}{Algorithm}
\theoremstyle{definition}
\theoremstyle{remark}
\numberwithin{equation}{section}
\renewcommand{\(}{\Bigl(}
\renewcommand{\)}{\Bigr)}
\newcommand{\ab}{\Bar{a}}
\newcommand{\bb}{\Bar{b}}
\newcommand{\cb}{\Bar{c}}
\newcommand{\db}{\Bar{d}}
\newcommand{\Hb}{\Bar{H}}
\newcommand{\p}{\partial}
\renewcommand{\epsilon}{\varepsilon}
\definecolor{GreenForest}{RGB}{0, 140, 80}
\newcommand{\ema}[1]{{\bf\textcolor{GreenForest}{#1}}}
\newcommand{\giovanni}[1]{{\color{red}{#1}}}
\newcommand{\jump}[1]{ \llbracket #1 \rrbracket}
\newcommand{\Jump}[1]{ \biggl\llbracket #1 \biggr\rrbracket} %%%%%%%%%%
\begin{document}
	
\begin{frontmatter}
\title{A third-order finite volume semi-implicit method\\ for the Shallow Water-Exner model}
%\title{A Third order staggered finite volume IMEX method for SW and SWE equation}
%\author{E.D. Fern\'andez-Nieto, J. Garres-D\'iaz, E. Macca, G. Russo}

% AUTHORS
\author[a]{Enrique D. Fernandez-Nieto\corref{}}
\ead{edofer@us.es}

\author[b]{Jose Garres-Diaz}
\ead{jgarres@us.es}

\author[c]{Emanuele Macca\corref{cor1}}  % SOLO LUI HA corref
\ead{emanuele.macca@unict.it}
\cortext[cor1]{Corresponding author} 

% \author[c]{Emanuele Macca}
% \ead{emanuele.macca@unict.it, Corresponding author}

\author[c]{Giovanni Russo}
\ead{russo@dmi.unict.it}

\address[a]{Dpto. Matem\'atica Aplicada I \& IMUS, Universidad de Sevilla, 41012 Sevilla, Spain}
\address[b]{Dpto. Matem\'atica Aplicada II \& IMUS, Universidad de Sevilla, 41092 Sevilla, Spain}
\address[c]{Dpto. Matematica ed Informatica, Universit\`a di Catania, 95125 Catania, Italy}

% \maketitle

\begin{abstract}
In this work, third-order semi-implicit schemes on staggered meshes for the shallow water and Saint-Venant-Exner systems are presented. They are based on a third-order extension of the technique introduced in Cassulli \& Cheng \cite{Casulli1992}. The stability conditions for these schemes depend on the velocity and not on the celerity, allowing us to reduce computational efforts, especially in subcritical flow simulations, which is the regime we are mainly interested in. The main novelty consists in  the third-order approximation of the pressure gradient term in the momentum equation through appropriate polynomial reconstructions. Concretely, CWENO conservative reconstruction is considered for the water thickness $h$  and a centered fourth-degree polynomial is adopted interpolating the cell averages of the free surface $\eta$. For time discretization, a third-order IMEX scheme is applied. In addition, a novel time-dependent semi-analytical solution for Saint-Venant-Exner system is introduced and compared with the numerical ones. Several tests are performed, including accuracy tests showing third-order accuracy, well-balance tests, and simulations of slow bedload processes for large time.
\end{abstract}
\begin{keyword}
	Semi-implicit method\sep Shallow water \sep High-order Finite-Volume \sep Bedload transport
\end{keyword}

% \maketitle

\end{frontmatter}
\tableofcontents
\section{Introduction}

Geophysical flows have been widely studied for many years, both from the modeling and numerical simulation point of view. Concretely, sediments dynamics (transport, suspended load, multiple species) have gained interest among researchers, due to its application to environmental problems related to erosion in rivers, land development in coastal areas, etc. Here we focus on numerical methods applied to such problems.

The development of efficient solvers for atmospheric and hydrodynamics flows is essential when dealing with real applications, in particular when considering large scale simulations, where the computational cost associated to explicit schemes is unaffordable because of the stability restriction on the time step. One may consider different strategies leading to longer time steps in the simulations, which helps reducing CPU effort. Fully implicit methods are usually impractical since they involve solving large and complex nonlinear systems at each time step. On the contrary, semi-implicit methods, where some terms are treated explicitly whereas others are implicitly discretized, have been widely used to simulate compressible and incompressible flows, including shallow water flows. See \cite{Casulli1992,Bonaventura2002,Rosatti2011,Tumolo2013,Busto2020,Busto2021,GarresDiaz2021,Orlando2022,Frolkovic2022,GomezBueno2023,CaballeroCardenas2023,Grosso2023} among many other references. The main drawback of these methods is the fact that they allow slow waves to be accurately solved, whereas fast waves are not well reproduced. If one has to reproduce both fast and slow waves, then a different methods should be considered, such as multi-rate methods (see \cite{Bonaventura2020}), where different time steps are used associated to \emph{fast/slow} variables.

Focusing now on shallow water flows, we recall what has been done in  \cite{Casulli1992,Casulli1994}, where the pressure gradient is implicitly discretized (very often with a linearization), and the discrete momentum equation is used in the mass equation obtaining an elliptic equation for the new  free surface values. Once the elliptic problem is solved, the new values for the discharge are updated. As result, the scheme entails a less restrictive stability (CFL) condition, which does not depend on the celerity but just on the velocity. Thus, this strategy allows a significant reduction in the computational time in case of subcritical regime, making it suitable to the simulation of slow processes. Then, in principle, bedload transport models with a weak fluid/sediment interaction is a natural application for this technique.

Actually, bedload models with weak fluid/sediment interaction are characterized by two different time scales, since the sediment evolution is much slower than the surrounding fluid. Moreover, this phenomena occurs mostly in a subcritical regime. So, it is a natural framework to apply semi-implicit methods. Let us remark that the ideas of \cite{Casulli1992} were used for multi-layer shallow flows in \cite{Bonaventura2018} in subcritical regimes, also with the addition of a decoupled Exner equation, and in \cite{Busto2022} for shallow flows for all Froude numbers also in the 2D case. The same strategy has been already used to design efficient solvers for bedload transport problems. Concretely, in \cite{Macca2024} the authors add the Exner equation with the Grass's closure for the solid transport discharge, and in \cite{GarresDiaz2022} efficient schemes for a family of general bedload models with gravitational effects were proposed. Notice that most of this work consider a staggered mesh instead of the collocated standard mesh commonly used in finite volume methods. It is relevant to emphasize that all the methods in these works are at most second-order accurate. Sediment transport is a very slow processes, therefore third-order methods are useful to reduce the numerical diffusion of the scheme and produce accurate results. To our knowledge, such third- or higher-order schemes for shallow flows using this semi-implicit strategy have not been proposed yet in the literature.

The main goal of this work is to extend the ideas in \cite{Casulli1992} to construct a third-order semi-implicit Finite Volume/Finite Difference method for shallow water flows with application to bedload transport problems. 
To this aim, the main idea introduced in this paper is to approximate the pressure gradient implicitly by using a combination of CWENO reconstruction and a centered third-order polynomial preserving the average of the free surface variable within the cells. Also, a novel semi-analytical solution, which depends on time, for the Exner system with the Grass formula and linear friction is introduced in this paper. This solution is used in the numerical test section to illustrate the performance of the proposed method for a slow erosion process in a large domain.

\bigskip

The paper is organized as follows: Section \ref{sec:num_scheme} is devoted to presenting the third-order semi-implicit scheme for the shallow water system. Concretely, Subsections \ref{Sec:space_reconst} and \ref{sec:time_recons} deal with the spatial and time discretizations, respectively. This method is adapted to the case of bedload sediment transport in Section \ref{sec:num_scheme_exner}. In Subsection \ref{subsec:semi-analitical-sol-SVE} a novel time-dependent semi-analytical solution for the Exner system is introduced. Numerical tests are presented in Section \ref{sec:numerics} and the conclusions of the paper are in Section \ref{se:conclusions}. For completeness, some known semi-analytical solutions of the Exner system are reported in Appendix \ref{App_A}. Finally, CWENO coefficients are reported in Appendix~\ref{App_B}, and the stages of the fully discrete semi-implicit IMEX-RK scheme are summarized in Appendix~\ref{App_C}, for completeness.

\section{Numerical scheme} \label{sec:num_scheme}
In this Section we describe all the details of the numerical scheme. For the sake of clarity, in particular for those readers interested in applying this technique to the shallow water case, we introduce the method for the Saint-Venant system (SV hereinafter) while its extension to bedload problems is dealt in Section \ref{sec:num_scheme_exner}.

Thus, we start by establishing the usual SV system,
\begin{equation}
\label{SW}
    \begin{cases}
        \partial_t \eta + \partial_x q = 0 ,\\
        \partial_t q + \partial_x (q^2/h) + gh\partial_x\eta = 0,
    \end{cases}
\end{equation}
where $h$ denotes the water thickness, $b= b(x)$ the bottom topography (fixed in time) and $\eta = h + b$ is the free surface. $q$ denotes the fluid discharge, $u = q/h$ the velocity and $g$ the gravity acceleration (see Figure \ref{fig:set-up}). 
\begin{figure}[!ht]
	\centering	
	\includegraphics[width=0.9\textwidth]{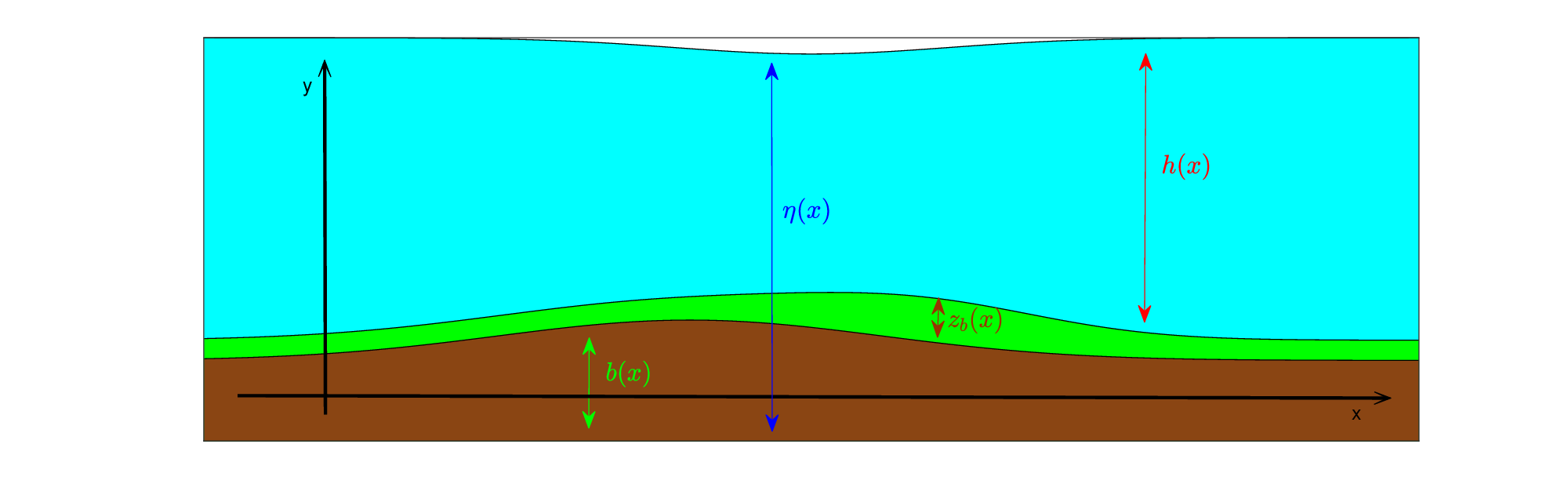}
	\caption{1D Exner model: water surface $\eta(x);$ water-flow $h(x);$ sediment layer $z_b(x)$ and bottom topography $b(x).$}
	\label{fig:set-up}
\end{figure}

\medskip

The computational domain is $\Omega = [x_{a},x_{b}]\times [0,T]$, where $T$ denotes the maximum simulation time.

%Before delving into the numerical schemes presented in this paper, we establish the temporal framework within the time domain $[0,T]$, where $T>0$. 
This domain is discretized into time intervals $[t^n,t^{n+1}]$, indexed by $n \in \mathbb{N}$, each with a time-step 
$\Delta t_n=t^{n+1}-t^n$, adhering to a CFL (Courant-Friedrichs-Lewy) condition \cite{CFL}, which will be discussed later in Section \ref{sec:numerics}. For simplicity of notation, we describe here the methods using a constant $\dt$ 
although in practice it is preferable to use adaptive time steps based on the CFL condition.% \footnote{Although it is preferable to dynamically adjust $\Delta t$ at each time step based on a CFL condition, for simplicity, a constant time step $\Delta t$ is adopted here to streamline notation in the method's description.}. 

Concerning the spatial discretization, we consider a staggered division of a 1D computational domain as follows. We assume a uniform discretization in $N$ cells or control volumes $I_i$, $1\leq i \leq N$. Conventionally, the cell endpoints are identified using half-indexes, such that the control volume is $I_i=[x_{i-1/2},x_{i+1/2}]$, with the cell center located at $x_i=(x_{i+1/2}+x_{i-1/2})/2$. The cell size, denoted by $\Delta x_i = x_{i+1/2}-x_{i-1/2}$, is typically uniform along the mesh, simply referred to as $\dx$.  Moreover, according to the staggered discretization, we introduce the dual cell $I_{i+1/2}=[x_{i},x_{i+1}]$. Then, we denote by $\eta_i^n$ (analogously $h_i^n,z_{b,i}^n$) and $q_{i+1/2}^n$ the approximation of the mean values over cells $I_i$ and $I_{i+1/2}$, respectively, at time $t=t^n$, i.e.,
\[
\eta_i^n\cong \frac{1}{\Delta x}\int_{I_i}\eta(x,t^n)\, dx,\qquad q_{i+1/2}^n\cong \frac{1}{\Delta x}\int_{I_{i+1/2}}q(x,t^n)\, dx. 
\]

By integrating the mass equation in \eqref{SW} over the cell $I_i$ and the momentum equation over $I_{i+\frac{1}{2}}$, the shallow water system, semi‑discretized in time, takes the form
\begin{align}
    \label{eta_t}
    \frac{d}{dt} \eta_i(t) &= - \frac{\hat{q}_{i+\frac{1}{2}} - \hat{q}_{i-\frac{1}{2}}}{\Delta x},\\
    \label{q_t}
    \frac{d}{dt} q_{i+\frac{1}{2}}(t) &= - \frac{\mathcal{F}_{i+1} - \mathcal{F}_{i}}{\Delta x} - g\,\frac{1}{\Delta x}\int_{I_{i+\frac{1}{2}}} h\,\partial_x\eta\,dx,
\end{align}
where $\hat{q}_{i+\frac{1}{2}}$ denotes the pointwise value approximation  of $q(x)$ at $x_{i+\frac{1}{2}}$, corresponding to the numerical flux for the mass equation at $x_{i+\frac12}$, and $\mathcal{F}_i$ denotes the numerical flux of the momentum equation at $x_i$.

For a sufficiently smooth function $w(x)$, the difference between its cell average and its pointwise value is $O(\Delta x^2)$. If we let $w_{i+\frac12}$ be the cell average of $w(x)$ on $I_{i+\frac12}$,
then
	\begin{align*}
		%\hat{w}_{i+\frac12}
		w(x_{i+\frac12}) &= w_{i+\frac12} - \frac{\Delta x^2}{24}\,w''(x_{i+\frac12}) + O(\Delta x^4) \\
		&= w_{i+\frac12} - \frac{1}{24}\Bigl(w_{i+\frac32} - 2w_{i+\frac12} + w_{i-\frac12}\Bigr) + O(\Delta x^4).
\end{align*}
Applying this relation to the flux yields
\[
\hat{q}_{i+\frac{1}{2}} = q_{i+\frac{1}{2}} - \frac{1}{24}\Bigl(q_{i+\frac{3}{2}} - 2q_{i+\frac{1}{2}} + q_{i-\frac{1}{2}}\Bigr),
\]

which, when substituted into \eqref{eta_t}, gives the fourth‑order accurate spatial discretization
\[
\frac{d}{dt}\eta_i(t)
= - \frac{q_{i+\frac{1}{2}} - q_{i-\frac{1}{2}}}{\Delta x}
  + \frac{1}{24\,\Delta x}\Bigl(q_{i+\frac{3}{2}} - 3q_{i+\frac{1}{2}} + 3q_{i-\frac{1}{2}} - q_{i-\frac{3}{2}}\Bigr).
\]

By treating the discharge $q$ implicitly while keeping the flux $\mathcal{F}$ explicit in time, one obtains a quasi‑discrete, fully implicit first‑order (in time) scheme for the Saint‑Venant system:
\begin{align*}    
    \eta_i^{n+1} &= \eta_i^n - \frac{\Delta t}{\Delta x}\bigl(q_{i+\frac{1}{2}}^{n+1} - q_{i-\frac{1}{2}}^{n+1}\bigr)
      + \frac{\Delta t}{24\,\Delta x}\,\Delta_3 q^{n+1}_i, \\
    q_{i+\frac{1}{2}}^{n+1}
    &= q_{i+\frac{1}{2}}^{n} - \frac{\Delta t}{\Delta x}\bigl(\mathcal{F}_{i+1}^{n+1} - \mathcal{F}_{i}^{n+1}\bigr)
      - \frac{\Delta t}{\Delta x}\,g \int_{I_{i+\frac{1}{2}}} h^{n+1}\,\partial_x\eta^{n+1}\,dx,
\end{align*}
where
\[
\Delta_3q_i
\equiv q_{i+\frac{3}{2}} - 3q_{i+\frac{1}{2}} + 3q_{i-\frac{1}{2}} - q_{i-\frac{3}{2}}
\]
approximates $\Delta x^3\,q_i'''$.

To improve efficiency by avoiding solving a fully implicit system, one may treat certain linear corrections explicitly. This leads to the following first‑order, semi‑implicit scheme:
\begin{align}    
    \eta_i^{n+1} &= \eta_i^n - \frac{\Delta t}{\Delta x}\bigl(q_{i+\frac{1}{2}}^{n+1} - q_{i-\frac{1}{2}}^{n+1}\bigr)
      + \frac{\Delta t}{24\,\Delta x}\,\Delta_3 q^{n}_i,
      \label{real_third_ord_eta}\\ \label{real_third_ord_q}
    q_{i+\frac{1}{2}}^{n+1}
    &= q_{i+\frac{1}{2}}^{n} - \frac{\Delta t}{\Delta x}\bigl(\mathcal{F}_{i+1}^n - \mathcal{F}_{i}^n\bigr)
      - \frac{\Delta t}{\Delta x}\,g \int_{I_{i+\frac{1}{2}}} h^n\,\partial_x\eta^{n+1}\,dx.
\end{align}

In this formulation, the nonlinear flux $\mathcal{F}$ is evaluated explicitly, the height $h$ in the term $g\,h\,\p_x\eta$ is taken at time level $n$, and the free‑surface gradient $\p_x\eta$ is treated implicitly. Moreover, in the $\eta$ equation, the primary flux difference is handled implicitly, while the higher‑order correction $\Delta_3q_i/\Delta x$ remains explicit, producing a penta-diagonal linear system instead of a hepta-diagonal one\footnote{As will be shown in detail in Section \ref{Sec:space_reconst} (see Eq.~\eqref{elliptic_eta}), the semi‑implicit treatment entails solving a linear system. In particular, treating the term $\Delta_3q_i/\Delta x$ implicitly leads to a hepta-diagonal matrix rather than the penta-diagonal one obtained under explicit treatment. Remarkably, this narrower‑band matrix still permits the scheme to achieve third‑order accuracy. Further details on its structure and derivation will be presented in the next section.}. A detailed derivation of the resulting linear system matrix is given in Section \ref{Sec:space_reconst}.

Once the terms requiring explicit or implicit treatment are identified, high‑order IMEX time integrators can be employed to achieve higher accuracy in time (see Section \ref{sec:time_recons} and \cite{Boscarino-Filbet,BoscarinoSemplice}).

As we shall see in Section \ref{ssec:accuracy}, the effect of the term $\Delta_3q_i^{n}$ is usually very small, and could be neglected in practical applications, therefore one may also consider a simplified version of the previous system, which ignores this term:
%so that Nonetheless, numerical experiments indicate that neglecting the correction term $\Delta_3(q,x_i)/\Delta x$ still yields overall third‑order accuracy 
%(see Section \ref{sec:numerics}). Consequently, we focus on the following first‑order, quasi‑discrete semi‑implicit Saint‑Venant scheme:
\begin{align}
    \label{eta_1}
    \eta_i^{n+1} &= \eta_i^n - \frac{\Delta t}{\Delta x}\bigl(q_{i+\frac{1}{2}}^{n+1} - q_{i-\frac{1}{2}}^{n+1}\bigr),\\
    \label{q_1}
    q_{i+\frac{1}{2}}^{n+1} &= q_{i+\frac{1}{2}}^{n} - \frac{\Delta t}{\Delta x}\bigl(\mathcal{F}_{i+1}^n - \mathcal{F}_{i}^n\bigr)
      - \frac{\Delta t}{\Delta x}\,g \int_{I_{i+\frac{1}{2}}} h^n\,\partial_x\eta^{n+1}\,dx.
\end{align}
Systems \eqref{real_third_ord_eta}-\eqref{real_third_ord_q} (hereafter referred as the fully third-order scheme) and \eqref{eta_1}–\eqref{q_1} (referred as the simplified scheme) constitute first‑order (in time), quasi‑discrete\footnote{A fully discrete scheme results upon discretizing the integral term, as detailed in Section \ref{Sec:space_reconst}.} semi‑implicit methods for the one‑dimensional shallow water equations. A linearization of the pressure term—freezing $h$ at time $t^n$—avoids solving a nonlinear system at each time step.

Following \cite{Casulli1992}, once the pressure‑integral in the second equation of each system is suitably approximated, one substitutes the resulting $q_{i+\frac{1}{2}}^{n+1}$ into the first equation, yielding a linear system for the free‑surface elevations. The key to attaining third‑order spatial accuracy lies in the precise quadrature used for the pressure integral. This is developed in the next section. From now on, for simplicity, we consider system 
\eqref{eta_1}–\eqref{q_1} in the description of the details of the scheme. The extension to system \eqref{real_third_ord_eta}-\eqref{real_third_ord_q} is straightforward.

\subsection{Third order space reconstruction} \label{Sec:space_reconst}
Let's direct our attention to the momentum equation \eqref{q_1}. Concretely, we focus on the integral over $I_{i+1/2}$, which is the more subtle part of the scheme. 
%Here, $\F_i^n$ denotes the Rusanov-flux $\F\Bigl(U_{i}^-;U_i^+\Bigr)$ approximating the convective term, derived from the third-order CWENO reconstructed values $U_i^{\pm}$, as expressed by
%\begin{equation}
%    \label{rusanov}
%    \F_i^n = \frac{1}{2}\left( f(U_{i}^+) + f(U_i^-) - \alpha_i(U_i^+ - U_i^-)\right),
%\end{equation}
%Here both $h_{i}^\pm,q_i^\pm$ are computed the CWENO reconstructions from the values $h_{i+1/2},q_{i+1/2}$.
%\begin{remark}
%\label{rem_h_bar}
%    It's noteworthy that $q$ pertains to the edges while $h$ to the centers. Consequently, to address this distinction, $\hb$ is introduced through third-order upwind interpolation from $h$, representing $h$ at the edges. 
%\end{remark}
%Meanwhile, the integral over $I_{i+1/2}$ 
The idea is to consider polynomial approximation of $h$ and $\p_x\eta$ of appropriate degree that allows us to approximate this integral up to third order. To do so, we consider a second order polynomial for the height and a third-order polynomial for the free surface. A key aspect is to define the stencil for the polynomial reconstruction taking into account that a staggered mesh is used, being $h_i$ and $\eta_{i}$ the cell averaged approximation on the control volume $I_i$. To overcome this difficulty, the integral in \eqref{q_1} is defined as 
\begin{multline*}
    \int_{I_{i+\ha}} h^n\p_x\eta^{n+1}dx = \int_{x_{i}}^{x_{i+\ha}} h^n\p_x\eta^{n+1}dx + \int_{x_{i+\ha}}^{x_{i+1}} h^n\p_x\eta^{n+1}dx \\ %\approx \\ &
    \approx \int_{x_{i}}^{x_{i+\ha}} P^{h,n}_i \p_x P^{\eta,n+1}_{i+\ha}dx + \int_{x_{i+\ha}}^{x_{i+1}} P^{h,n}_{i+1}\p_x P^{\eta,n+1}_{i+\ha}dx. \nonumber
\end{multline*}
where $P^h(x),P^\eta(x)$ are polynomial of an appropriate degree reconstructing of the height and the free surface variables, with $P^h(x)$ evaluated at time step $n$, while $P^\eta(x)$ is evaluated at time step $n+1$ (see sketch in Figure \ref{skecth_scheme}). 

\begin{figure}[!ht]
	\centering
	\includegraphics[width=0.6\textwidth]{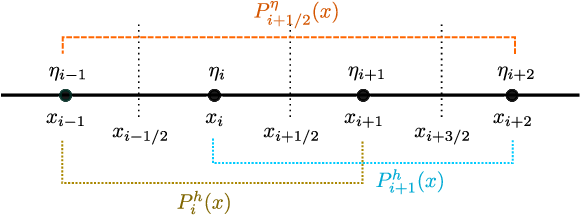}
	\caption{Sketch of the polynomial reconstruction $P^h_i(x),P^h_{i+1}(x),P^\eta_{i+1/2}(x)$ and the stencils around $x_{i+1/2}$.}
	\label{skecth_scheme}
\end{figure}

Thus, the polynomial for the height reconstruction is defined as
$$
    P^h_i(x) = a_{0,i}  + a_{1,i} (x-x_i) + a_{2,i} (x-x_i)^2,
$$
the second order polynomial of the CWENO reconstruction in terms of $\{h_j\}_{j=i-1,i,i+1}$. Concretely, defining the usual CWENO coefficients $\omega^L_i,\omega^C_i,\omega^R_i$ and $d^L,d^C,d^R$ (see Appendix \ref{App_B}), they are
\begin{align}
    a_{2,i}  &= \frac{\omega^C_i}{2d^C \dx^2}\(h_{i-1} - 2h_i + h_{i+1}\), \\ 
    a_{1,i}  &= \frac{\omega^L_i}{\dx}(h_i - h_{i-1}) - \frac{\omega^R_i}{\dx}(h_i - h_{i+1}) - \frac{\omega^C_i}{\dx d^C} 
    \( \ha(h_{i-1} - h_{i+1}) + d^L(h_i - h_{i-1}) - d^R(h_i - h_{i+1})\), \\
    a_{0,i}  &= h_i\omega^L_i + h_i\omega^R_i - \frac{\omega^C_i}{d^C}\(\frac{1}{24}(h_{i-1} - 26h_i + h_{i+1}) + d^Lh_i + d^Rh_i\)  .
\end{align}
The polynomial $P^h_{i+1}(x)$ is analogously defined. 

For the free surface variable we need a third-order polynomial in order to get a second order polynomial approximating $\p_x\eta$. Then, we consider the polynomial
\begin{multline*}\label{eq:pol_eta}
	P^{\eta}_{i+\ha}(x) = b_{0,i+\ha} + b_{1,i+\ha}(x-x_{i+\ha}) + b_{2,i+\ha}\((x-x_{i+\ha})^2 - \frac{\dx^2}{12}\) +   b_{3,i+\ha}(x-x_{i+\ha})^3,  
\end{multline*}     
where the term $\dx^2/12$ is introduced in order to have 
\[
\dfrac{1}{\dx}\displaystyle\int_{I_{i+\frac12}} P^{\eta}_{i+1/2}(x)dx = b_{0,i+\ha}
\]
whose coefficients are computed in such a way that
$$
\frac{1}{\Delta x} \int_{I_j} P^\eta_{i+1/2}(x)=\eta_{j}, \qquad  \mbox{for} \qquad j=i-1,i,i+1,i+2.
$$
Concretely, after some algebraic computations, we get
$$
b_{0,i+\ha}=\frac{-\eta_{i-1}+9 \eta_i+9 \eta_{i+1}-\eta_{i+2}}{16},
\qquad 
b_{1,i+\ha}=\frac{\eta_{i-1}-15 \eta_i+15 \eta_{i+1}-\eta_{i+2}}{12 \Delta x},
$$
$$
b_{2,i+\ha}=\frac{\eta_{i-1}- \eta_i- \eta_{i+1}+\eta_{i+2}}{4 \Delta x^2},
\qquad 
b_{3,i+\ha}=\frac{-\eta_{i-1}+3 \eta_i-3 \eta_{i+1}+\eta_{i+2}}{6 \Delta x^3}.
$$ 

\noindent Let be $q^*$ the explicit part of the momentum equation, 
\begin{equation}
    \label{q_star}
    q^*_{i+\ha} = q_{i+\ha}^n - \frac{\dt}{\dx}\Bigl(\F_{i+1}^n-\F_{i}^n\Bigr).
\end{equation}
then the discrete momentum equation reads
\begin{equation}
	q_{i+\ha}^{n+1} = q_{i+\ha}^* - \frac{\dt}{\dx}g \left( \int_{x_{i}}^{x_{i+\ha}} P^h_i \partial_xP^{\eta}_{i+\ha}dx + \int_{x_{i+\ha}}^{x_{i+1}} P^h_{i+1}\partial_xP^{\eta}_{i+\ha}dx\right) .    \label{eq:int_press}
\end{equation}

Once the polynomials $P^h_i(x),P^h_{i+1}(x),P^\eta_{i+1/2}(x)$ are defined, the integrals in the previous equation can be written in terms of the free surface values. For the sake of completeness, we write in details all this coefficients in what follows.

Defining the constants values
\begin{equation}\label{eq:constants}
    \zeta = \dx\left(\dfrac14 - \frac{\sqrt{3}}{12}\right), \qquad \tau = \dx\left(\dfrac14 + \frac{\sqrt{3}}{12}\right),
\end{equation}
we consider 
$$\beta^\pm_\delta = \frac{\delta}{2\dx^2}\pm \dfrac{\delta^2}{2\dx^3}, \qquad \gamma^\pm_\delta = \frac{\delta}{2\dx^2}\pm 3\dfrac{\delta^2}{2\dx^3},
$$
and $\alpha^{\delta,L/R}_{i+\ha}$  as
$$
\begin{array}{l}
	\alpha^{\delta,L}_{i+\ha} =\dfrac{1}{4} \(a_{0,i}  + a_{1,i}  \, \delta + a_{2,i}  \, \delta^2\),\\ \\
	\alpha^{\delta,R}_{i+\ha} =\dfrac{1}{4}\(a_{0,i+1}  - a_{1,i+1}  \, \delta + a_{2,i+1}  \, \delta^2\),
\end{array}
$$
for $\delta \in \{\zeta, \tau\}$. Now, the following notation is introduced: $\xi^{L/R}_{i+\ha,j}$ for $j \in \{i-1,i,i+1,i+2\}$ represents the factor that multiplies  $\eta_j^{n+1}$ coming from the left $(L)$ and right $(R)$ integrals in \eqref{eq:int_press}, respectively, when discretizing the momentum equation integrated on $I_{i+1/2}$. They are defined as follows,
%$\xi^j_{i+\ha,i+r}$ for $j \in \{i,i+1\}$, $r\in \{-1,0,1,2\}$, 
\begin{equation} \label{eq:defxiL}
    \begin{array}{lcl}      
    \xi^L_{i+\ha,i-1} &=& \alpha^{\zeta,L}_{i+\ha}\left(\frac{1}{12\dx}-\beta_\tau^+\right) + \alpha^{\tau,L}_{i+\ha}\left(\frac{1}{12\dx}-\beta_\zeta^+\right),\\
    \xi^L_{i+\ha,i} &=& \alpha^{\zeta,L}_{i+\ha}\left(\frac{-5}{4\dx}+\gamma_\tau^+\right) + \alpha^{\tau,L}_{i+\ha}\left(\frac{-5}{4\dx}+\gamma_\zeta^+\right),\\
    \xi^L_{i+\ha,i+1} &=& \alpha^{\zeta,L}_{i+\ha}\left(\frac{5}{4\dx}+\gamma_\tau^-\right) + \alpha^{\tau,L}_{i+\ha}\left(\frac{5}{4\dx}+\gamma_\zeta^-\right),\\
    \xi^L_{i+\ha,i+2} &=& \alpha^{\zeta,L}_{i+\ha}\left(\frac{-1}{12\dx}-\beta_\tau^-\right) + \alpha^{\tau,L}_{i+\ha}\left(\frac{-1}{12\dx}-\beta_\zeta^-\right),  
\end{array} 
\end{equation}
and
\begin{equation} \label{eq:defxiR}
\begin{array}{lcl}      
	\xi^R_{i+\ha,i-1} &=& \alpha^{\zeta,R}_{i+\ha}\left(\frac{1}{12\dx}+\beta_\tau^-\right) + \alpha^{\tau,R}_{i+\ha}\left(\frac{1}{12\dx}+\beta_\zeta^-\right),\\
	\xi^R_{i+\ha,i} &=& \alpha^{\zeta,R}_{i+\ha}\left(\frac{-5}{4\dx}-\gamma_\tau^-\right) + \alpha^{\tau,R}_{i+\ha}\left(\frac{-5}{4\dx}-\gamma_\zeta^-\right),\\
	\xi^R_{i+\ha,i+1} &=& \alpha^{\zeta,R}_{i+\ha}\left(\frac{5}{4\dx}-\gamma_\tau^+\right) + \alpha^{\tau,R}_{i+\ha}\left(\frac{5}{4\dx}-\gamma_\zeta^+\right),\\
	\xi^R_{i+\ha,i+2} &=& \alpha^{\zeta,R}_{i+\ha}\left(\frac{-1}{12\dx}+\beta_\tau^+\right) + \alpha^{\tau,R}_{i+\ha}\left(\frac{-1}{12\dx}+\beta_\zeta^+\right).  
\end{array}
\end{equation}

%
%\chi^R_{i+1/2,i-1} = ( azetaR*( 1/(12*dx) + gammaTau_m) + atauR*( 1/(12*dx) + gammaZeta_m) )*etam1
%\chi^R_{i+1/2,i}   = ( azetaR*(-5/(4*dx)  - gammaTau_m) + atauR*(-5/(4*dx)  - gammaZeta_m) )*etai 
%\chi^R_{i+1/2,i+1} = ( azetaR*( 5/(4*dx)  - gammaTau_p) + atauR*( 5/(4*dx)  - gammaZeta_p) )*etap1 
%\chi^R_{i+1/2,i+2} = ( azetaR*(-1/(12*dx) + gammaTau_p) + atauR*(-1/(12*dx) + gammaZeta_p) )*etap2

Thus, the discrete moment equation \eqref{eq:int_press} reads
%\begin{align}
%	\label{q_new}
%	q_{i+\ha}^{n+1} = q_{i+\ha}^* -&\dt g\Bigg((\xi_{i+\ha,i-1}^L+\xi_{i+\ha,i-1}^R ) \eta_{i-1}^{n+1} + (\xi_{i+\ha,i}^L +\xi_{i+\ha,i}^R)\eta_{i}^{n+1} \\ \nonumber 
%	& \qquad + (\xi_{i+\ha,i+1}^L +\xi_{i+\ha,i+1}^R)\eta_{i+1}^{n+1} + (\xi_{i+\ha,i+2}^L +\xi_{i+\ha,i+2}^R)\eta_{i+2}^{n+1}   \Bigg).    
%\end{align}
\begin{multline}
	\label{q_new}
	q_{i+\ha}^{n+1} = q_{i+\ha}^* \ -\ \dt g\Bigg((\xi_{i+\ha,i-1}^L+\xi_{i+\ha,i-1}^R ) \eta_{i-1}^{n+1} + (\xi_{i+\ha,i}^L +\xi_{i+\ha,i}^R)\eta_{i}^{n+1} \\  
	 + (\xi_{i+\ha,i+1}^L +\xi_{i+\ha,i+1}^R)\eta_{i+1}^{n+1} + (\xi_{i+\ha,i+2}^L +\xi_{i+\ha,i+2}^R)\eta_{i+2}^{n+1}   \Bigg).    
\end{multline}

Now, these values of $q_{i+\ha}$ are replaced in \eqref{eta_1}, obtaining the elliptic equation for $\eta$
\begin{align}
    \nonumber
    & \eta_{i-2}^{n+1} \, \frac{\dt^2}{\dx}g\(\xi^L_{i-\ha,i-2} +\xi^R_{i-\ha,i-2} \) 
    \\ \nonumber
    + \, &\eta_{i-1}^{n+1} \, \frac{\dt^2}{\dx}g\( \xi^L_{i-\ha,i-1} + \xi^R_{i-\ha,i-1} - \xi^L_{i+\ha,i-1} - \xi^R_{i+\ha,i-1} \)  
    \\ \nonumber
    +\, &\eta_{i}^{n+1} \, \bigg( 1+ \frac{\dt^2}{\dx}g\( \xi^L_{i-\ha,i} + \xi^R_{i-\ha,i} - \xi^L_{i+\ha,i} - \xi^R_{i+\ha,i} \)  \bigg)
    \\ \nonumber
    + \, &\eta_{i+1}^{n+1} \, \frac{\dt^2}{\dx}g\( \xi^L_{i-\ha,i+1} + \xi^R_{i-\ha,i+1} - \xi^L_{i+\ha,i+1} - \xi^R_{i+\ha,i+1} \) 
    \\
    \label{elliptic_eta}
    + \, & \eta_{i+2}^{n+1} \, \frac{\dt^2}{\dx}g\(-\xi^L_{i+\ha,i+2} -\xi^R_{i+\ha,i+2} \) = \eta_i^*
\end{align}
where $\eta_{i}^*$ collects the explicit contributions
\begin{equation}
\label{eq:etaStar}
\eta_{i}^* = \eta_{i}^n - \frac{\dt}{\dx}\(q^{*}_{i+\ha} - q^*_{i-\ha}\). 	
\end{equation} 
Once $\eta_{i}^{n+1}$, $i=1,\dots,N$ has been computed, then $q_{i+1/2}^{n+1}$, $ i=0,\dots,N$, follows from equation \eqref{q_new}.

\subsubsection{Convective term computation} \label{Aux_h}
In this subsection, we detail how the convective term $\F_i^n$ in \eqref{q_1} is computed. First, the fact of using the staggered discretization makes it necessary to interpolate the height values at the interfaces $h_{i+1/2}$. To this aim, we consider a third-order upwind interpolation from $h_i$ as follows. We construct the quadratic polynomials $P_{i+1/2}^L$,$P_{i+1/2}^R$ such that
$$\dfrac{1}{\Delta x} \displaystyle \int_{I_j} P^L_{i+1/2}(x)=h_{j}, \quad 
j=i-1,i,i+1; \qquad \dfrac{1}{\Delta x} \displaystyle \int_{I_k} P^R_{i+1/2}(x)=h_{k}, \quad k=i,i+1,i+2.
$$
Now, the interpolated height $h_{i+1/2}$ is defined as the upwind value, i.e.,
\begin{equation*}
	h_{i+\ha} = \begin{cases}
		P_{i+\ha}^L(x_{i+\ha}) \quad {\rm if }\; q_{i+\ha}\ge0  ,     \\
		P_{i+\ha}^R(x_{i+\ha}) \quad {\rm if }\; q_{i+\ha}<0   .     
	\end{cases}
\end{equation*}
where
\begin{align*}
	P_{i+\ha}^L(x_{i+\ha}) &= h_{i-1} + \frac{3}{2}\(h_{i} - h_{i-1}\) + \frac{3}{8}\(h_{i+1} - 2h_{i} + h_{i-1} \), \\    
	P_{i+\ha}^R(x_{i+\ha}) &= h_{i} + \frac{1}{2}\(h_{i+1} - h_{i}\) - \frac{1}{8}\(h_{i+2} - 2h_{i+1} + h_{i} \).
\end{align*}
 
Once we have $h_{i+1/2},q_{i+1/2}$,  the term $\F_i^n$ is computed using the Rusanov flux, as
%\begin{equation}
%	\label{rusanov}
%	\F_i = \F\Bigl(U_{i}^{-};U_i^{+}\Bigr) = \frac{1}{2}\left( f(U_{i}^+) + f(U_i^-) \,-\, \alpha_i(U_i^+ - U_i^-)\right),
%\end{equation}
\begin{equation}
	\label{rusanov}
	\F_i = \F\Bigl(h_i^-,q_i^-,h_i^+,q_i^+\Bigr) = \frac{1}{2}\left( f(h_i^+,q_i^+) + f(h_i^-,q_i^-) \,-\, \alpha_i(q_i^+ - q_i^-)\right),\qquad i=1,...,N,
\end{equation}
where
\[
    f(h,q) \equiv \frac{q^2}{h}
\]
and  $h_{i}^{\pm},q_{i}^{\pm}$ are the reconstructed values from $h_{i+1/2},q_{i+1/2}$ by the usual third-order CWENO procedure \cite{levy1999central}, recalled in Appendix \ref{App_B},  and $\alpha_i=\max\{|u_{i}^-|,|u_{i}^+|\}$ an approximation of the maximum wave speed.

%As previously mentioned, due to the disparity in the point-wise definitions of $q$ and $h$, an auxiliary reconstruction is necessary to determine $h$ at the edges of a volume $I_i$. Thus, it's worth noting that, for a third-order reconstruction, an upwinding solution can be employed between a third-order left polynomial $P_{i+\frac{1}{2}}^L(x)$ and a third-order right polynomial $P_{i+\frac{1}{2}}^R(x)$ evaluated at $x_{i+\frac{1}{2}}$. \\
%In practice,
%\begin{align*}
%    P_{i+\ha}^L(x_{i+\ha}) &= h_{i-1} + \frac{3}{2}\(h_{i} - h_{i-1}\) + \frac{3}{8}\(h_{i+1} - 2h_{i} + h_{i-1} \); \\    
%    P_{i+\ha}^R(x_{i+\ha}) &= h_{i} + \frac{1}{2}\(h_{i+1} - h_{i}\) - \frac{1}{8}\(h_{i+2} - 2h_{i+1} + h_{i} \);
%\end{align*}
%and 
%\begin{equation*}
%    \hb_{i+\ha} = \begin{cases}
%        P_{i+\ha}^L(x_{i+\ha}) \quad {\rm if }\; u_{i+\ha}\ge0       \\
%        P_{i+\ha}^R(x_{i+\ha}) \quad {\rm if }\; u_{i+\ha}<0        
%    \end{cases}
%\end{equation*}

\subsection{Third order time discretization} \label{sec:time_recons}
High order in time is obtained by adopting Implicit-Explicit Runge-Kutta methods (IMEX-RK) \cite{boscarino2024implicit}.
Such schemes are characterized by a double Butcher table of the form:
\begin{table*}[htbp]
	%\footnotesize
	\begin{center}
		\begin{tabular}{c|c}
       $c_E$ & $A_E$ \\
			\hline
            & \\[-3mm]
			& $b_E^\top$
		\end{tabular} 
		\qquad \qquad \qquad
		\begin{tabular}{c|c}
       $c_I$ & $A_I$ \\
			\hline
            & \\[-3mm]
			& $b_I^\top$
		\end{tabular} 
\label{IMEX_tableau}
	\end{center}
\end{table*}

The explicit table is on the left, while the implicit one is on the right. Column vectors $c$ and $b$ have $s$ components, matrix $A_E$ is lower triangular with zero diagonal, while $A_I$ is lower triangular, i.e.\ the implicit scheme is diagonally implicit Runge-Kutta (DIRK). 
        
Drawing from the concept proposed in \cite{Boscarino-Filbet}, where IMEX Runge-Kutta methods are employed for systems exhibiting stiffness that may not conform strictly to an additive or partitioned structure, we express system \eqref{SW} as a comprehensive system of ordinary differential equations (ODEs). Here, we incorporate appropriate discrete operators for approximating spatial derivatives. The pivotal insight from \cite{Boscarino-Filbet} lies in discerning which particular terms justify implicit treatment and which can be handled explicitly. 

Following \cite{Boscarino-Filbet}, we rewrite system \eqref{SW} in the form
\begin{equation}
    \label{ode_bosca_semi}
    U' = H(U,U),
\end{equation}
where we assume that the dependence of $H$ on the first argument is non stiff, while the dependence on the second argument is stiff. 
IMEX-RK schemes can be applied in this context 
by making use of two sets of stage values, as follows:
    \begin{itemize}
        \item Computation of the stage values: For $\bi=1,\ldots,s$ compute
        \begin{align*}
            U^{(\bi)}_E & = U^n + \Delta t\sum_{\bj=1}^{\bi-1}a_{\bi,\bj}^E{H}\left(U_E^{(\bj)},U_I^{(\bj)}\right)\\
            U^{(\bi)}_I & = U^n + \Delta t
            \sum_{\bj=1}^{\bi-1}a_{\bi,\bj}^I {H}\left(U_E^{(\bj)},U_I^{(\bj)}\right) + \Delta t \, a_{l,l}^I {H}\left(U_E^{(\bi)},U_I^{(\bi)}\right).
        \end{align*}
        \item Computation of the numerical solution:
        $$U^{n+1} = U^{n} + \dt\sum_{\bi = 1}^s b_\bi{H}\(U_E^{(\bi)},U_I^{(\bi)}\).$$
    \end{itemize}
Note that, because of the triangular structure, the stages are not coupled. 
Also, note that one set of stage fluxes, 
${H}\left(U_E^{(\bj)},U_I^{(\bj)}\right)$, appears in the method, which implies that there is only one vector $b = b^E = b^I$ which will be used in the evaluation of the numerical solution. For more information on this approach, please consult 
\cite{Boscarino-Filbet} or \cite{boscarino2024implicit}, Sec.~3.3.
In light of what exposed above,  
 system \eqref{SW} can be written in the form 
\eqref{ode_bosca_semi}, 
 with $U = [\eta,q]^\top$ and 
 \begin{equation}
    \label{ode_form_semi}
    H(U_E,U_I) = 
    \begin{bmatrix}
        -\p_x q_I \\
        -\p_x(q_Eu_E) - gh_E\p_x \eta_I 
    \end{bmatrix}
\end{equation}
After space discretization, the system can be written in the form
\[  
    U' = \widetilde{H}(U,U)
\]
 
% in both additive and partitioned form in order to apply the IMEX strategy. Here we consider the partitioned form. First, system \eqref{SW} is written in the form
%\begin{equation}
%    \label{ode_bosca_semi}
%    U' = H(U).
%\end{equation}
%where $U = [\eta, q]^T$ and $H(U)$ is given by
%\begin{equation}
%    \label{ode_form_semi}
%    H(U) = 
%    \begin{bmatrix}
%        -\p_x q \\
%        -\p_x(qu) - gh\p_x \eta 
%    \end{bmatrix}
%\end{equation}
%%with the subscript $E$ and $I$ denoting which term has to be treated explicitly and which implicitly. 
%
%Then, by doubling the variables, the semi-implicit scheme can be written in the form
%    \begin{equation}
%        \label{ode_bosca}
%        [U_E,U_I]' = \widetilde{H}(U_E,U_I).
%    \end{equation}
%
%

with
\begin{equation}
    \label{ode_form_part}
    \widetilde{H}(U_E,U_I) = 
    \begin{bmatrix}
       0   & -D^1_x(q_I) \\ - D^3_x(q_E \, u_E) &  -gh_E D^2_x(\eta_I) 
    \end{bmatrix},
\end{equation}
where $D_x^k$ denote suitable discrete approximation of $\partial_x$. In our case, $D_{x}^1(q_I)$ is defined at each control volume $I_i$;  $(h_ED_x^2(\eta_I))$ and $D^3_x((qu)_E)$ are defined at each dual control volume $I_{i+1/2}$. Concretely, they are:
\label{page:D}
\begin{itemize}
  	\item $D_{x}^1(q_I) = \dfrac{q_{i+\ha} - q_{i-\ha}}{\dx}$, since $q_{i\pm\ha}$ are suitably defined on cell edges in terms of $q_I$. 
   	\item According to \eqref{q_new}, we have    		
   	\begin{multline*}
    		(h_ED_x^2(\eta_I)){i+\frac12} = (\xi_{i+\frac{1}{2},i-1}^L+\xi_{i+\frac{1}{2},i-1}^R ) \eta_{i-1} + (\xi_{i+\frac{1}{2},i}^L +\xi_{i+\frac{1}{2},i}^R)\eta_{i}  \\
    		+ (\xi_{i+\frac{1}{2},i+1}^L +\xi_{i+\frac{1}{2},i+1}^R)\eta_{i+1} + (\xi_{i+\frac{1}{2},i+2}^L +\xi_{i+\frac{1}{2},i+2}^R)\eta_{i+2},
   	\end{multline*} 
   	with coefficients $\xi$, defined by (\ref{eq:defxiL}) and (\ref{eq:defxiR}), in terms of the explicit variables $h_E$, and $\eta$ values in terms of the implicit variables $\eta_I$.
   	\item $D^3_x(q_E \, u_E)= \dfrac{\mathcal{F}_{i+1}-\mathcal{F}_{i}}{\dx}$, where $\mathcal{F}_{i}$ is the Rusanov flux defined by \eqref{rusanov} in terms of $U_E$. 
\end{itemize}  
    
With this notation, the semi-discrete in time first order semi-implicit scheme can be written as:
    \begin{equation}
    	\begin{cases}
    		\label{First order}
    		\eta^{n+1} = \eta^n - \dt D_x^1(q^{n+1}),\\
    		q^{n+1} = q^n - \dt D_x^3(q^nu^n) - \dt gh^nD_x^2(\eta^{n+1}).
    	\end{cases}
    \end{equation}

Then, in practice, the fully-discrete first order of the semi-implicit scheme for the SW equations can be written as follows:
\begin{align}
    \label{eta_1-3} & \eta_i^{n+1} = \eta_i^{n}  - \frac{\Delta t}{\Delta x}\Bigl(q_{i+\frac{1}{2}}^{n+1} - q_{i-\frac{1}{2}}^{n+1}\Bigr), \\
    \label{q_1-3} & q_{i+\frac{1}{2}}^{n+1} = q_{i+\frac{1}{2}}^{n} - \frac{\Delta t}{\Delta x}\Bigl(\mathcal{F}_{i+1}^{n}-\mathcal{F}_{i}^{n}\Bigr) -\Delta t g\Bigg((\xi_{i+\frac{1}{2},i-1}^{L,n}+\xi_{i+\frac{1}{2},i-1}^{R,n} ) \eta_{i-1}^{n+1} + (\xi_{i+\frac{1}{2},i}^{L,n} +\xi_{i+\frac{1}{2},i}^{R,n})\eta_{i}^{n+1} \nonumber \\  
	& \qquad + (\xi_{i+\frac{1}{2},i+1}^{L,n} +\xi_{i+\frac{1}{2},i+1}^{R,n})\eta_{i+1}^{n+1} + (\xi_{i+\frac{1}{2},i+2}^{L,n} +\xi_{i+\frac{1}{2},i+2}^{R,n})\eta_{i+2}^{n+1}   \Bigg).
\end{align}

With this in mind, we apply an IMEX (Implicit-Explicit) scheme to system \eqref{ode_form_semi}. Here we consider the IMEX SSP3(4,3,3) scheme defined by the double Butcher table \ref{tableau} (see \cite{PareschiRusso,Boscarino-Filbet}). Applying this method to the system, we find $U^{n+1}$ following Algorithm \ref{al:imex}. Notice that only steps 2, 4, 6 and 8 in this algorithm involve solving a five-diagonal linear system, while the rest of steps consist of collecting terms.\\

\begin{table}[htbp]
	%\footnotesize
	\begin{center}
		\begin{tabular}{c|cccc}
			$0$ & $0$ & $0$ & $0$ & $0$  \\
			$0$ & $0$ & $0$ & $0$ & $0$ \\ 
			$1$ & $0$ & $1$ & $0$ & $0$ \\ 
			$1/2$ & $0$ & $1/4$ & $1/4$ & $0$ \\
			\hline
			& $0$ & $1/6$ & $1/6$ & $2/3$
		\end{tabular} 
		\qquad \qquad \qquad
		\begin{tabular}{c|cccc}
			$\ab$ & $\ab$ &  $0$ &  $0$ &  $0$  \\
			 $0$ & $-\ab$ & $\ab$ & $0$ & $0$ \\ 
			 $1$ & $0$ & $1-\ab$ & $\ab$ & $0$ \\ 
			$1/2$ & $\bb$ & $\cb$ & $\db$ & $\ab$ \\
			\hline
			& $0$ & $1/6$ & $1/6$ & $2/3$
		\end{tabular}
		\caption{Butcher tableaux of the explicit (left) and implicit (right) counterparts of the third-order IMEX SSP3(4,3,3) scheme. Here, we use $\ab = 0.24169426078821$, $\bb = 0.06042356519705$, $\cb = 0.12915286960590$ and $\db = 0.5 - \ab -\bb-\cb.$ }\label{tableau}
	\end{center}
\end{table}

%\begin{algorithm}
%	\caption{}
%	\begin{algorithmic}[1]\label{al:imex}
%		\State{$U_E^{(1)} = U^n$,}
%		\State{$U_I^{(1)} = U_1^* + \ab\dt \Hb(U_E^{(1)},U_I^{(1)})$\qquad with \qquad $U_1^* = U^n$, }
%		\State{$U_E^{(2)} = U^n$, }
%		\State{$U_I^{(2)} = U_2^* + \ab\dt \Hb(U_E^{(2)},U_I^{(2)})$\qquad with \qquad $U_2^* = U^n - \widehat{U}_1$}
%		\State{$U_E^{(3)} = U^n + \dfrac{1}{\ab}\widehat{U}_2$, }
%		\State{$U_I^{(3)} = U_3^* + \ab\dt \Hb(U_E^{(3)},U_I^{(3)})$\qquad with \qquad $U_3^* = U^n +\dfrac{(1-\ab)}{\ab}\widehat{U}_2 $, }
%		\State{$U_E^{(4)} = U^n + \dfrac{1}{4\ab}(\widehat{U}_2 +\widehat{U}_3)$,}
%		\State{$U_I^{(4)} = U_4^* + \ab\dt \Hb(U_E^{(4)},U_I^{(4)})$\qquad with \qquad $U_4^* = U^n +\dfrac{1}{\ab}\left(b\widehat{U}_1 + c\widehat{U}_2 + d\widehat{U}_3\right), $}
%		\State{$U^{n+1} = U^n + \dfrac{1}{6\ab}\left(\widehat{U}_2 + \widehat{U}_3 + 4\widehat{U}_4\right).$ }
%	\end{algorithmic}
%	where $\widehat{U}_k =  U_I^{(k)}-U^*_k$, for $k=1,...,4$.
%\end{algorithm}

\hrule\vspace{-2mm}
\begin{algo}{(to apply IMEX SSP3(4,3,3) method in Table \ref{tableau})}\label{al:imex}
	\hrule
	%\caption{}
	\begin{algorithmic}[1]
		\State{$U_E^{(1)} = U^n$,}
		\State{$U_I^{(1)} = U_1^* + \ab\dt \Hb(U_E^{(1)},U_I^{(1)})$\qquad\qquad with \qquad $U_1^* = U^n$, }
		\State{$U_E^{(2)} = U^n$,}
		\State{$U_I^{(2)} = U_2^* + \ab\dt \Hb(U_E^{(2)},U_I^{(2)})$\qquad\qquad with \qquad $U_2^* = U^n - \widehat{U}_1$,}
		\State{$U_E^{(3)} = U^n + \dfrac{1}{\ab}\widehat{U}_2$, }
		\State{$U_I^{(3)} = U_3^* + \ab\dt \Hb(U_E^{(3)},U_I^{(3)})$\qquad\qquad with \qquad $U_3^* = U^n +\dfrac{(1-\ab)}{\ab}\widehat{U}_2 $, }
		\State{$U_E^{(4)} = U^n + \dfrac{1}{4\ab}(\widehat{U}_2 +\widehat{U}_3)$,}
		\State{$U_I^{(4)} = U_4^* + \ab\dt \Hb(U_E^{(4)},U_I^{(4)})$\qquad\qquad with \qquad $U_4^* = U^n +\dfrac{1}{\ab}\left(b\widehat{U}_1 + c\widehat{U}_2 + d\widehat{U}_3\right), $}
		\State{$U^{n+1} = U^n + \dfrac{1}{6\ab}\left(\widehat{U}_2 + \widehat{U}_3 + 4\widehat{U}_4\right).$ }
	\end{algorithmic}
where $\widehat{U}_k =  U_I^{(k)}-U^*_k$, $k=1,...,4$.
\end{algo}
\vspace{-1mm}\hrule

%Applying this scheme to the system and optimizing the evaluations $\Hb$ \cite{Macca2024}, we find $U^{n+1}$ following Algorithm \ref{al:imex}.
%\begin{enumerate}
%    \item $U_E^{(1)} = U^n;$
%    
%    \item $U_I^{(1)} = U^n + \ab\dt \Hb(U_E^{(1)},U_I^{(1)});$
%    
%    \item $U_E^{(2)} =  U^n;$
%    
%    \item $U_I^{(2)} = U_*^1 + \ab\dt\Hb(U_E^{(2)},U_I^{(2)});$
%
%    \item $U_E^{(3)} = U^n + \Bar{U}^2/\ab;$
%
%    \item $U_I^{(3)} = U_*^2 + \ab\dt\Hb(U_E^{(3)},U_I^{(3)}); $
%
%    \item $U_E^{(4)} = U^n + (\Bar{U}^2 + \Bar{U}^3)/(4\ab);$
%    
%    \item $U_I^{(4)} = U_*^3 + \ab\dt\Hb(U_E^{(4)},U_I^{(4)}); $    
%    
%    \item $U^{n+1} = U^n + (\Bar{U}^2 + \Bar{U}^3 + 4\Bar{U^4})/(6\ab);$
%\end{enumerate}
%in which $\Bar{U}$ and $U_*$ are so defined:
%\begin{align*}
%    \Bar{U}^1 &= U_I^{(1)} - U^n \quad\quad \Bar{U}^2 = U_I^{(2)} - U_*^1 \quad \quad \Bar{U}^3 = U_I^{(3)} - U^2_* \quad\quad \Bar{U}^4 = U_I^{(4)} - U^3_* 
%    \\
%    U^1_* & = U^n-\Bar{U}^1  \quad\quad U^2_* = U^n + \frac{1-\ab}{\ab}\Bar{U}^2 \quad \quad U^3_* = U^n + \frac{\bb\Bar{U}^1 + \cb\Bar{U}^2 + \db\Bar{U}^3}{\ab}.
%\end{align*}

\vspace{2mm}
The complete derivation of the high-order fully-discrete scheme is provided in Appendix~\ref{App_C}, specifically in equations~\eqref{l-stages}-\eqref{num_sol_imex}.
\subsection{Numerical property}
In this section some numerical properties of the semi-implicit scheme introduced above have been reported such as: well-balance property and stability.

\subsubsection{Well-balanced property for lake at rest}\label{ssec:wb}
When simulating geophysical flows it is essential to consider well-balanced schemes, especially when the time dependent solution is a small perturbation of a stationary one. In a few words, a scheme is said to be well-balanced for a family of steady states if it preserves such states exactly, or at leat up to a certain order of approximation (see \cite{GomezBueno2023} and references therein). For the shallow water system, it is desirable to obtain numerical methods that are able to exactly preserve lake-at-rest solutions, that is, constant free surface and null velocity.

Here, we establish the well-balanced properties of the first and third-order IMEX schemes for lake-at-rest states. Concretely, the following result holds:
\begin{theorem}
	Let $U(x)=[\bar{\eta},0]^\top$, with $\bar{\eta}\in\mathbb{R}$ constant, a \emph{lake-at-rest} state. Then the semi-implicit scheme described by \eqref{q_star}, \eqref{q_new}, \eqref{elliptic_eta}, \eqref{eq:etaStar} is well-balanced for $U(x)$.
\end{theorem}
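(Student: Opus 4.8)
The plan is to start from lake-at-rest data at time level $n$, namely $\eta_i^n=\bar\eta$ for all $i$ and $q_{i+\ha}^n=0$ for all $i$, and to propagate it through the four building blocks \eqref{q_star}, \eqref{q_new}, \eqref{elliptic_eta}, \eqref{eq:etaStar}, checking at each step that the state is reproduced exactly. The whole argument reduces to three consistency facts: (i) the Rusanov flux $\F_i^n$ vanishes when the discharge is identically zero; (ii) the discrete pressure-gradient operator annihilates constants, i.e.\ $\sum_{j}\bigl(\xi^L_{i+\ha,j}+\xi^R_{i+\ha,j}\bigr)=0$ for every edge; and (iii) the elliptic operator in \eqref{elliptic_eta} is invertible, so that exhibiting one solution identifies it uniquely.

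First I would evaluate the explicit stage. Since $q\equiv 0$, the CWENO reconstructions of the discharge give $q_i^{\pm}=0$, and because $f(h,q)=q^2/h$ satisfies $f(h,0)=0$, each Rusanov flux \eqref{rusanov} reduces to $\F_i^n=\tfrac12(0+0-\alpha_i\cdot 0)=0$. Consequently $q^*_{i+\ha}=0$ from \eqref{q_star}, and then $\eta_i^{*}=\eta_i^n=\bar\eta$ from \eqref{eq:etaStar}. The crux is fact (ii). I would establish it by noting that, when $\eta_{i-1}=\eta_i=\eta_{i+1}=\eta_{i+2}=\bar\eta$, the explicit formulas for $b_{0,i+\ha},\dots,b_{3,i+\ha}$ give $b_{0,i+\ha}=\bar\eta$ and $b_{1,i+\ha}=b_{2,i+\ha}=b_{3,i+\ha}=0$; hence $P^{\eta}_{i+\ha}(x)\equiv\bar\eta$ and $\p_xP^{\eta}_{i+\ha}\equiv 0$. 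The pressure integral in \eqref{eq:int_press} therefore vanishes identically, independently of the height reconstruction $P^h$, which is exactly the statement $\sum_j(\xi^L_{i+\ha,j}+\xi^R_{i+\ha,j})\bar\eta=0$, i.e.\ $\sum_j(\xi^L_{i+\ha,j}+\xi^R_{i+\ha,j})=0$.

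With (i)--(iii) in hand I would substitute the constant ansatz $\eta_j^{n+1}=\bar\eta$ into the elliptic equation \eqref{elliptic_eta}. The coefficients multiplying $\xi$-terms on the edge $i-\ha$ sum to $\sum_j(\xi^L_{i-\ha,j}+\xi^R_{i-\ha,j})=0$ and those on the edge $i+\ha$ sum to $0$ as well, so all the $\tfrac{\dt^2}{\dx}g(\cdots)$ contributions cancel and the left-hand side collapses to $\bar\eta\cdot 1=\bar\eta$, matching the right-hand side $\eta_i^{*}=\bar\eta$. Thus the constant state solves the linear system; by the invertibility asserted in (iii) it is the \emph{unique} solution, giving $\eta_i^{n+1}=\bar\eta$. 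Feeding this back into \eqref{q_new} and using (ii) once more yields $q_{i+\ha}^{n+1}=q^*_{i+\ha}-\dt g\,\bar\eta\sum_j(\xi^L_{i+\ha,j}+\xi^R_{i+\ha,j})=0$, which closes the step and proves the claim.

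The main obstacle is not the bookkeeping but the verification of fact (ii): one must be certain that $P^{\eta}$ reproduces constants exactly, which is the genuine content of well-balancing and is what makes the telescoping in \eqref{elliptic_eta} work. The invertibility in (iii) is the other point needing a word of justification; for the lake-at-rest argument it suffices that the $\dt^2$-perturbation of the identity in \eqref{elliptic_eta} is nonsingular under the stated CFL restriction. Finally, the extension to the third-order IMEX scheme of Section~\ref{sec:time_recons} follows by the same reasoning applied stage by stage: each stage inherits $\F=0$ and $\p_xP^{\eta}=0$ on the constant state, so every stage value equals $[\bar\eta,0]^\top$ and the combination in Algorithm~\ref{al:imex} preserves it.
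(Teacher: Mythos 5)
Your proposal is correct and follows the same overall skeleton as the paper's proof: show the explicit contributions vanish ($\F_i^n=0$, hence $q^*_{i+\ha}=0$ and $\eta_i^*=\bar\eta$), establish the zero-sum identity $\sum_{j}\bigl(\xi^L_{i+\ha,j}+\xi^R_{i+\ha,j}\bigr)=0$, use invertibility of the pentadiagonal system to conclude $\eta_i^{n+1}=\bar\eta$ uniquely, and feed back into \eqref{q_new} to get $q_{i+\ha}^{n+1}=0$. The one genuine difference is how you prove the key identity (the paper's Eq.~\eqref{eq:coef_proof}): the paper sums the explicit formulas \eqref{eq:defxiL}--\eqref{eq:defxiR} and verifies the cancellation by direct algebra, whereas you observe that the interpolation $P^{\eta}_{i+\ha}$ reproduces constants exactly ($b_{0,i+\ha}=\bar\eta$, $b_{1,i+\ha}=b_{2,i+\ha}=b_{3,i+\ha}=0$), so $\p_x P^{\eta}_{i+\ha}\equiv 0$ and the pressure quadrature vanishes pointwise, independently of $P^h$. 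Your route is more conceptual and makes transparent \emph{why} the coefficients must cancel (it is exactly the statement that the reconstruction preserves constants), at the cost of relying on the identification of the $\xi$-coefficients with the Gauss quadrature of \eqref{eq:int_press}; the paper's computation is less illuminating but verifies the cancellation directly on the coefficients as actually implemented. On the invertibility point, your appeal to \eqref{elliptic_eta} being an $O(\dt^2/\dx^2)$ perturbation of the identity under the CFL restriction is at least as defensible as the paper's claim that zero row sum implies strict diagonal dominance of $I+\mathcal{A}$ (which, strictly speaking, also needs a smallness or sign argument on the off-diagonal entries); neither proof is fully airtight on this step, but both suffice for the intended conclusion.
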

\begin{proof}
	Assuming we start from the state $U(x)$, we have $\eta_i^n = \bar{\eta}$, $q_{i+\ha}^n =  0$, and consequently $\mathcal{F}_{i}^n,\mathcal{F}_{i+1}^n$ are also zero.
	Thus, we trivially get
	$$q_{i+\ha}^* = 0 \qquad\mbox{and}\qquad \eta_i^* = \bar{\eta}.$$
	Looking at the coefficient $\xi_{i\pm\frac{1}{2},j}^{L/R}$, we observe that 
	$$
		\begin{array}{ccrcr}      
			\xi^L_{i+\ha,i-1}+\xi^R_{i+\ha,i-1} &=& \left(\frac{1}{12\dx}-\frac{\tau^2}{\dx^3}\right)\left(\alpha^{\zeta,L}_{i+\ha}+\alpha^{\zeta,R}_{i+\ha}\right) &+& \left(\frac{1}{12\dx}-\frac{\zeta^2}{\dx^3}\right)\left(\alpha^{\tau,L}_{i+\ha}+\alpha^{\tau,R}_{i+\ha}\right),\\[2mm]
			\xi^L_{i+\ha,i}+\xi^R_{i+\ha,i} &=& \left(\frac{-5}{4\dx}+\frac{\tau^2}{\dx^3}\right)\left(\alpha^{\zeta,L}_{i+\ha}+\alpha^{\zeta,R}_{i+\ha}\right) &+& \left(\frac{-5}{4\dx}+\frac{\zeta^2}{\dx^3}\right)\left(\alpha^{\tau,L}_{i+\ha}+\alpha^{\tau,R}_{i+\ha}\right),\\[2mm]
			\xi^L_{i+\ha,i+1}+\xi^R_{i+\ha,i+1} &=& \left(\frac{5}{4\dx}-\frac{\tau^2}{\dx^3}\right)\left(\alpha^{\zeta,L}_{i+\ha}+\alpha^{\zeta,R}_{i+\ha}\right) &+& \left(\frac{5}{4\dx}-\frac{\zeta^2}{\dx^3}\right)\left(\alpha^{\tau,L}_{i+\ha}+\alpha^{\tau,R}_{i+\ha}\right),\\[2mm]
			\xi^L_{i+\ha,i+2}+\xi^R_{i+\ha,i+2} &=& \left(-\frac{1}{12\dx}+\frac{\tau^2}{\dx^3}\right)\left(\alpha^{\zeta,L}_{i+\ha}+\alpha^{\zeta,R}_{i+\ha}\right) &+& \left(\frac{-1}{12\dx}+\frac{\zeta^2}{\dx^3}\right)\left(\alpha^{\tau,L}_{i+\ha}+\alpha^{\tau,R}_{i+\ha}\right),
		\end{array}
	$$
    where the coefficients are defined in Section \ref{Sec:space_reconst}.
    From the above relations it follows that 
	\begin{equation}
		\label{eq:coef_proof}
	\displaystyle \sum_{j=i-1}^{i+2} \left(\xi^L_{i+\ha,j}+\xi^R_{i+\ha,j}\right) = 0.
\end{equation}
The elliptic equation \eqref{elliptic_eta} can be written in vector notation as:
\[
\Bigl(I + \mathcal{A} \Bigr)[\eta_1^{n+1}, \dots, \eta_N^{n+1}]^T = [\bar{\eta}, \dots, \bar{\eta}]^T,
\]
where $I$ denotes the identity matrix, and $\mathcal{A} $ is a five-diagonal matrix, with elements $O(g(\Delta t/\Delta x)^2)$. Assuming $ h_i^n > 0 $, this system forms a regular pentadiagonal linear system. Since $\mathcal A$ is a zero row sum matrix, $I+\mathcal A$ is  strictly diagonally dominant. By the Gershgorin-Hadamard theorem \cite{horn2013}, such a matrix has non-zero eigenvalues, ensuring the system has a unique solution, namely $\eta_i^{n+1} = \bar{\eta}$ for $i = 1, \dots, N$. This result follows from the property that $\mathcal{A}[1, \dots, 1]^T = 0$, which is guaranteed by \eqref{eq:coef_proof}.

Finally, using again \eqref{eq:coef_proof} in \eqref{q_new} with $\eta_i^{n+1} = \bar{\eta}$, $q_{i+\ha}^{\star} = 0$, we get $q_{i+\ha}^{n+1}=0$ and the proof is concluded.
\end{proof}
A similar rationale applies to the third-order IMEX scheme, affirming that each stage maintains the equilibrium state, consequently ensuring the numerical solution remains in equilibrium.

\subsubsection{Stability condition} \label{ssec_stability}
For an explicit scheme, the time step $\Delta t$ is determined at stage $t^n$ by the Courant-Friedrichs-Lewy (CFL) condition as $\Delta t = {\rm CFL}\,\Delta x/\rho_{\rm max}$, where the CFL restriction is governed by the maximum spectral radius of the matrix $A(U)$ defining the hyperbolic system 
\begin{equation}\label{eq:CFL_ex}
    \textrm{CFL} = \frac{\rho^n_{\rm max} \Delta t^n}{\Delta x} \leq C_{\textrm{ex}},
\end{equation}
where 
\[
    \rho^n_{\rm max} = \max_i\rho(A(U^n_i) = \max_i(|u^n_i|+\sqrt{g h^n_i})
\]
%being $\lambda_{\rm max} = \max\abs{\lambda_\pm}$, with $\lambda_{\pm} = u \pm \sqrt{gh}$, 
and $C_{\textrm{ex}}$ is a constant close to one.

In the case of our semi-implicit scheme \eqref{eta_1}-\eqref{q_1}, we empirically derive the following stability condition based on the velocity and not the celerity:
\begin{equation}\label{eq:MCFL}
    \textrm{MCFL} = \frac{u^n_{\rm max} \Delta t^n}{\Delta x} \leq C_{\textrm{im}},
\end{equation}
where $u_{\rm max} = \max_j{|u_j|}$ and $C_{\textrm{im}} \approx 0.75$.

This condition is considerably less restrictive than \eqref{eq:CFL_ex}, since from conditions \eqref{eq:CFL_ex} and \eqref{eq:MCFL} one has 
\[
    \frac{\Delta t_E}{\Delta t_I} = \frac{\rm CFL}{\rm MCFL}\frac{u_{\rm max}}{\rho_{\rm max}} \ll 1
\]
%as the condition for the classical CFL can be expressed as
%\[ 
%     \textrm{CFL} \ =\  \frac{\lambda^n_{\rm max}}{u^n_{\rm max}}\, \textrm{MCFL} \ \leq\  \frac{\lambda^n_{\rm max}}{u^n_{\rm max}} C_{\rm im}
%\]
and $\rho^n_{\rm max}/u^n_{\rm max}\gg 1$ for small Froude numbers.

\section{Application to bedload transport: Saint-Venant-Exner model}\label{sec:num_scheme_exner}
In this section we extend the semi-implicit method described in the previous section to the application of bedload transport, which is modelled through the Saint-Venant-Exner (SVE) system. It consist of the coupling of the shallow water equation \eqref{SW} with the bedload transport equation:
\begin{equation}
    \label{sediment_equation}
    \p_t z_b + \p_x q_b = 0
\end{equation}
where $z_b(x,t)$ denotes the height of the sediment layer and $q_b(h,q)(x,t)$ represents the solid transport discharge, see Figure \ref{fig:set-up}.

For the sake of simplicity, we consider here $q_b$ computed by the Grass model \cite{Grass,CastroNieto,QianLi}:
\begin{equation}
    \label{q_b}
    q_b = \xi A_g u|u|^{m_g-1}
\end{equation}
where, empirically, it has been found that $m_g \in [1,4]$, $A_g \in ]0,1[$, and $\xi = 1/(1-\rho_0)$ in which $\rho_0$ is the porosity of the sediment layer, constant in our setting, and $A_g$ symbolizes the strength of the interaction between the fluid and the sediment.  

Consequently, the SVE 1D system is given by:
\begin{equation}
    \label{Ex_sis}
    \begin{cases}
        \partial_t\eta + \partial_x (q + q_{b}) = 0,\\
        \p_t q + \partial_x(q^2/h) + g\partial_x \eta = 0, \\
        \p_t z_b + \p_x q_b = 0,
    \end{cases}
\end{equation}
where the free-surface is defined at $\eta(x,t) = h(x,t) + b(x) + z_b(x,t)$. Notice that here $b(x)$ is a fix (non-erodible) bottom. System \eqref{Ex_sis} can be rewritten in non conservative form as:
\begin{equation}
    \label{non_cons_term_1}
    \frac{\partial W}{\partial t} + A(W)\frac{\partial W}{\partial x} = 0,
\end{equation}
where $W=[h,q,z_b]^T$ and
\[
  %  W = \begin{bmatrix}
   %     h \\ q \\ z_b
   % \end{bmatrix}, \qquad 
   A(W) = \begin{bmatrix} 0 & 1 & 0 \\ gh - \frac{u^2}{h} & 2u & gh \\ \alpha & \beta & 0 \end{bmatrix},\qquad\mbox{with}\quad \alpha = \frac{\partial q_b}{\partial h},\ \beta = \frac{\partial q_b}{\partial q}
\]
We have $\beta = {m_g \xi A_g |u|^{m_g-1}}/{h}$ and $\alpha = -u\beta$. Then, the eigenvalues of the systems are given by the roots of the characteristic polynomial
\[
    p_{\lambda}(\lambda) = -\lambda((u-\lambda)^2 -gh) + gh\beta(\lambda -u).
\]
Some approximations of the eigenvalues have been proposed different authors (see e.g. \cite{deVries1965,Armanini2018}). In the case of Grass model and Froude number ($Fr$) sufficiently different from 1, the approximation proposed by de Vries is
$$
\lambda_1 = u - \sqrt{g h }, 
\quad 
\lambda_2=\beta \frac{u}{1-Fr^2},
\quad 
\lambda_2 = u+ \sqrt{g h},
$$
where $\lambda_3 < \lambda_2$ for $Fr <1$ and $\lambda_3 > \lambda_2$ for $Fr >1$.  Remark that when $A_g =0$ the three eigenvalues become $\lambda = u-\sqrt{gh},\,0,\,u+\sqrt{gh}$. Detailed analysis of Exner model with Grass equation  have been conducted in Macca \emph{et al.} \cite{Macca2024,MaccaRussoBumi}, {\color{blue} where authors proposed the eigenvalues approximations 
$$
\lambda_1 = u - \sqrt{g h }-\beta \frac{\sqrt{g h}}{2(1-Fr)}, 
\quad 
\lambda_2 = \beta \frac{u}{1-Fr^2},
\quad
\lambda_3 = u+ \sqrt{g h}+\beta \frac{\sqrt{g h}}{2(1-Fr)},
$$
with $Fr=|u|/\sqrt{gh}$. A comparison of the exact eigenvalues for the case $h=1$, $\xi A_g=0.001$ and $Fr\in [0,2]$ is presented in Figure~\ref{fig:CompEigenvalues} with de Vries \cite{deVries1965} and Macca et al. \cite{MaccaRussoBumi,Macca2024} approximations. We observe that for $Fr\gtrsim 1.3$ the approximation proposed by Macca et al. is the most accurate for the intermediate eigenvalue. For $Fr\lesssim0.7$ all approximations produce similar results.}

\begin{figure}[!ht]
	\centering	
	\includegraphics[width=0.8\textwidth]{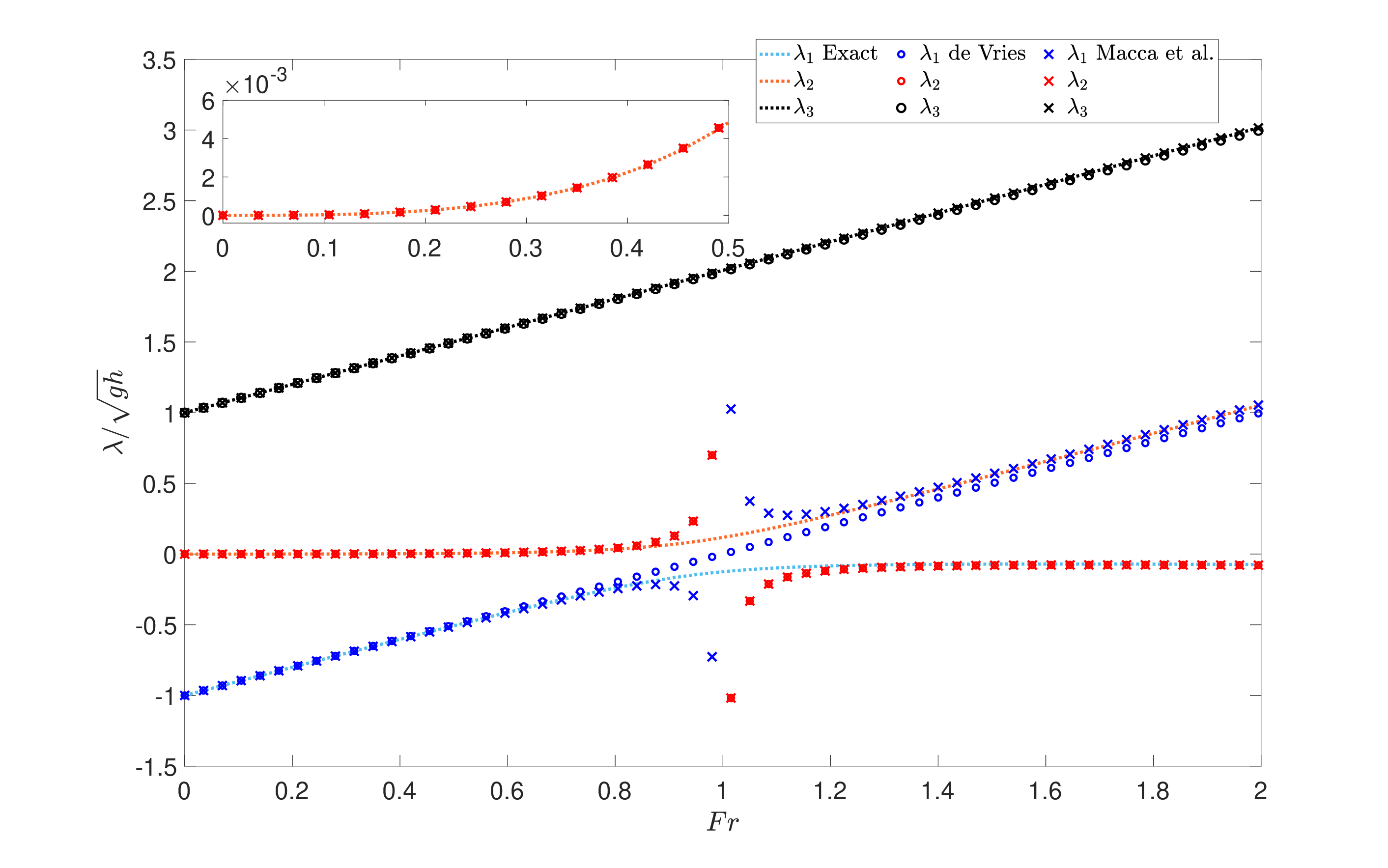}
	\caption{Comparison of the exact eigenvalues and the approximations proposed by de Vries \cite{deVries1965} and Macca et al. \cite{MaccaRussoBumi,Macca2024}. {The Inset figure is a zoom of the intermediate eigenvalue for small Froude numbers.}}
	\label{fig:CompEigenvalues}
\end{figure}

\subsection{Numerical scheme for Exner model} \label{Num_scheme_Ex}
In this section, we focus on incorporating the spatial and temporal discretizations introduced in Section \ref{sec:num_scheme} for the shallow water equations into the SVE model with the Grass equation. For the sake of clarity here we explain these discretizations separately.

\paragraph{Space discretization} Retaining the same framework as proposed in Section \ref{sec:num_scheme}, the discrete system reads 

%\giovanni{The superscripts in $\G$ should be switched. I did it in the first and third equations}

\begin{align}
	\label{eta_1_zb} & \eta_i^{n+1} = \eta_i^n  
    - \frac{\dt}{\dx}\Bigl(q_{i+\frac{1}{2}}^{n+1} - q_{i-\frac{1}{2}}^{n+1}\Bigr) 
    - \frac{\dt}{\dx}\Bigl( \G^{\eta,n}_{i+\ha} - \G^{\eta,n}_{i-\ha} \Bigr), \\[2mm]
	\label{q_zb} & q_{i+\frac{1}{2}}^{n+1} = q_{i+\frac{1}{2}}^{n} - \frac{\Delta t}{\Delta x}\Bigl(\mathcal{F}_{i+1}^n-\mathcal{F}_{i}^n\Bigr) - \frac{\Delta t}{\Delta x}g\int_{I_{i+\frac{1}{2}}} h^n\p_x\eta^{n+1}\, dx ,\\
	\label{zb_zb} & z_{b,i}^{n+1} = z_{b,i}^n 
    -  \frac{\dt}{\dx}\Bigl( \G^{z_b,n}_{i+\ha} - \G^{z_b,n}_{i-\ha} \Bigr),
\end{align}
where the only difference are the terms $ \G^{\eta,n}_{i+\ha}, \G^{z_b,n}_{i+\ha}$ in equations \eqref{eta_1_zb} and \eqref{zb_zb}, correspending to discretizations of $\p_x q_b$ in the mass and the sediment equations, respectively. They are defined by the usual Rusanov flux:
\begin{equation}
	\label{rusanov_eta_Ex}
	\begin{array}{l}
	\G^{\eta,n}_{i+\ha} = \dfrac12\left(q_{b,i+\ha}^+ + q_{b,i+\ha}^- - \alpha_{i+\ha}\left(\eta_{i+\ha}^+-\eta_{i+\ha}^-\right)\right),\\[2mm]
	 \G^{z_b,n}_{i+\ha} = \dfrac12\left(q_{b,i+\ha}^+ + q_{b,i+\ha}^- - \alpha_{i+\ha}\left(z_{b,i+\ha}^+-z_{b,i+\ha}^-\right)\right),
	\end{array}
\end{equation}
where, similarly to what is done in Subsection \ref{Aux_h}, to compute $\F_{i+\ha}$, the values $h_{i+\ha}^\pm,$ $q_{i+\ha}^\pm$ and $z_{b,i+\ha}^\pm$ adopted in \eqref{rusanov_eta_Ex} are the CWENO reconstruction from the values $h_i$, $q_i$ and $z_{b,i}$ respectively.

Finally, an analogous discretization of the pressure term in \eqref{q_zb} as in Section \ref{sec:num_scheme} leads to the first-order in time third order in space semi-implicit scheme for the SVE model \eqref{Ex_sis}.

\paragraph{Time discretization} Similarly to the time reconstruction described in Section \ref{sec:time_recons}, we appropriate define the quantities $W= [\eta,q,z_b]^T$ and $K(W_E,W_I)$ to adapt the iterative process used for the shallow water equations to the Exner model. In particular,
similarly to equation \eqref{ode_form_semi}, the ODE system becomes
\[
    W' = K(W),\qquad \mbox{with}\quad K(W)= \begin{bmatrix}
    -\p_x (q_{b} +q) \\
        -\p_x(qu) - gh\p_x\eta  \\
        -\p_xq_{b}
\end{bmatrix}
\]
%where again $E$ and $I$ represent the explicit and implicit treatment of the corresponding terms. 
where again, 
the semi-implicit approach described above can be adopted to reach high order in time, with
\[
    K(W_E,W_I) = 
    \begin{bmatrix}
              -\p_x (q_{bE} +q_I) \\
              -\p_x(qu)_E - gh_E\p_x\eta_I  \\
              -\p_xq_{bE}
          \end{bmatrix}
\]
which, after space discretization, becomes
\[
    \tilde{K}(W_E,W_I) = 
    \begin{bmatrix}
        -D_x^{4,\eta}(q_{b_E}) & - D_x^1(q_I) \\
        -D_x^3(q_E\, u_E) & - gh_ED_x^2(\eta_I)  \\
        -D_x^{4,z_b}(q_{b_E}) & 0
    \end{bmatrix},
\]

%by doubling the unknowns, the semi-implicit scheme is written as 
%$$ [W_E,W_i]' = \tilde{K}(W_E,W_I) = \begin{bmatrix}
%    -D_x^{4,\eta}(q_{b_E}) & - D_x^1(q_I) \\
%        -D_x^3(q_E\, u_E) & - gh_ED_x^2(\eta_I)  \\
%        -D_x^{4,z_b}(q_{b_E}) & 0
%\end{bmatrix},$$
where the differential operators $D_x^j, j=1,2,3$ are defined as in Section \ref{sec:time_recons}, page \pageref{page:D}, and 
\[
    D_x^{4,w }(q_{b_E}) = \dfrac{\G_{i+\ha}^w-\G_{i-\ha}^w}{\dx},  \quad
    w  \in \{\eta,z_b\},
\]
with $\G_{i+\ha}^{\eta,n}$ and $\G_{i+\ha}^{z_b,n}$ are given by \eqref{rusanov_eta_Ex}, in terms of $W_E$. 

Thus, the semi-discrete in time first order semi-implicit scheme can be written as:
\begin{equation}
\label{First_ord_Exner}
\begin{cases}
	\eta^{n+1} = \eta^n - \dt D_x^1(q^{n+1}) - \dt D_x^{4,\eta}(q_b^n),\\
	q^{n+1} = q^n - \dt D_x^3(q^nu^n) - \dt gh^nD_x^2(\eta^{n+1}),\\
	z_b^{n+1} = z_b^n- \dt D_x^{4,z_b}(q_b^n).
\end{cases}
\end{equation}

%\paragraph{Time discretization} Similarly to the time reconstruction described in Section \ref{sec:time_recons}, we appropriate define the quantities $W= [\eta,q,z_b]^T$ and $K(W_E,W_I)$ to adapt the iterative process used for the shallow water Equations to the Exner model. In particular,
%similarly to equation \eqref{ode_form_semi}, the ODE system becomes
%$$ W' = K(W_E,W_I) = \begin{bmatrix}
%	-\p_x (q_{b_E} +q_I) \\
%	-\p_x(qu)_E - gh_E\p_x\eta_I  \\
%	-\p_xq_{b_E}
%\end{bmatrix}$$
%where $E$ and $I$ represent the explicit and implicit treatment. 
%
%The semi-implicit scheme can be written as 
%$$ W' = \tilde{K}(W_E,W_I) = \begin{bmatrix}
%	-\hat{D}_x(q_{b_E}) & - D_x(q_I)_x \\
%	-\hat{D}_x((qu)_E)& - gh_ED_x(\eta_I)  \\
%	-\hat{D}_x(q_{b_E}) & 
%\end{bmatrix}.$$
%The differential operator $D_x$ and $\hat{D}_x$ are defined as in Section \ref{sec:time_recons}.
%Thus, the semi-discrete in time first order semi-implicit scheme can be written as:
%\begin{equation}
%	\label{First_ord_Exner}
%	\begin{cases}
%		\eta^{n+1} = \eta^n - \dt \hat{D}_x(q_b^n) - \dt D_x(q^{n+1}), \\ 
%		q^{n+1} = q^n - \dt \hat{D}_x(q^nu^n) - \dt g h^nD_x(\eta^{n+1}),\\
%		z_b^{n+1} = z_b^n - \dt \hat{D}_x(q_b^n).
%	\end{cases}
%\end{equation}

To achieve a higher-order solution at time $t^{n+1}$ based on the solution at time $t^n$, an IMEX $s$-stage Runge-Kutta method is employed, as detailed in Section \ref{sec:time_recons}.

%\begin{remark}
%As reported for the shallow water equation Section \ref{Aux_h}, in the Exner model $h_{\i+\ha}$ and $z_{b,i+\ha}$ have to be computed due to the disparity in the point-wise definition of $q$, $h$ and $z_b$.
%\end{remark}

\subsection{Time dependent semi-analytical solution with linear friction}\label{subsec:semi-analitical-sol-SVE}
Deriving analytical solutions for the Exner system is useful to validate the numerical method. Very often, it is not easy to obtain exact solutions, although it is possible to get semi-analytical solutions, obtained by solving a small system of ODEs, adopting a standard method for ODEs. To this aim, it is common to consider a stationary regime for the fluid, and solve the sediment equation under some hypothesis. We remark that this is an approximate solution, since in reality the fluid is not strictly stationary if the sediment changes in time. However, if the sediment is much slower than the fluid, we can assume the fluid motion is almost stationary.

In that case, the semi-analytical solution is obtained by solving (maybe numerically) a scalar PDE. This is the case of some semi-analytical solutions that have been already presented in the literature (see \cite{Hudson2005}, \cite{Macca2024}). With the purpose of completeness, they are presented in Appendix \ref{App_A}, and will be considered in the numerical tests section \ref{sec:numerics}.  

Here, we introduce a novel semi-analytical solution for the Exner system, which depends on time, with the Grass formula with $m_g=3$ and a linear friction term. The main difference with respect to the literature is that, in this case, we do not consider a stationary regime for the fluid but we assume some profiles for the height, velocity and sediment layer at initial time. Then, the hydrodynamic variables $(h,u)$ evolve together with the bottom ($z_b$). It is a time-dependent solution. This solution is obtained by solving (numerically) an ODE system. 

Let us start from the SVE system \eqref{Ex_sis} with $m_g=3$, to which we add a friction term:
\begin{equation} \label{SVEcflinear}
	\left\{ 
	\begin{array}{l}
    \displaystyle
		\partial_t{h}+ \partial_x (h \, u )=0, \\[4mm]
    \displaystyle
		\partial_t( h u) + \partial_x ( h u^2 + g h^2/2) = - g h \partial_x z_b, \\[4mm]
    \displaystyle
		\partial_t(z_b)+\partial_x (\xi A_g u^3)=0.
	\end{array}
	\right.
\end{equation}

We set the function
\[
    h_{\textrm{ini}}(x) = h_0 \in \mathbb{P}_0[x],\qquad u_{\textrm{ini}}(x)= a x+ b\in \mathbb{P}_1[x], \qquad 
    z_{b,\textrm{ini}}(x)= \frac{3 \xi A_g a}{c}(ax+b)^2\in \mathbb{P}_2[x].
\]
for $h_0,a,b,c\in\mathbb{R}$ and $h_0>0$. Let $\tilde{W}(t)=(\, \tilde{h}(t), \, \tilde{u}(t),\, \tilde{z}_b(t))$ be a solution of the ODE system 
$$
\left\{
\begin{array}{l}
	\displaystyle \frac{{\rm d}}{{\rm d}t} \tilde{h}+a \, \tilde{u} \, \tilde{h}=0, \\[2mm]
	\displaystyle \frac{{\rm d}}{{\rm d}t} \tilde{u}+a \, \tilde{u}^2+ \frac{6 g \xi A_g a^2}{c} \tilde{z}_b= 0,\\[2mm]
	\displaystyle \frac{{\rm d}}{{\rm d}t} \tilde{z}_b + c \, \tilde{u}^3=0, \\
%	\tilde{h}(0)=h_0, \qquad 	\tilde{u}(0)=1,  \qquad 	\tilde{z}_b(0)=1.
\end{array} 
\right.
$$
with $(\tilde{h}(0),\tilde{u}(0),\tilde{z}_b(0)) = (h_0,1,1)$. Then, 
$$
W(x,t)= \bigg(  \tilde{h}(t), \, \tilde{h}(t) \,  u_{\textrm{ini}}(x) \, \tilde{u}(t), \, z_{b,\textrm{ini}}(x) \, \tilde{z}_b(t) \bigg)
$$
is a smooth solution of system \eqref{SVEcflinear} with initial condition
$
W(x,0)=(h_0, h_0 u_{\textrm{ini}}(x),z_{b,\textrm{ini}}(x) )
$
and suitable boundary conditions. This semi-analytical solution will be considered in the numerical tests (see Section \ref{sec:numerics}).

\subsection{Absorbing boundary condition (ABC)} \label{ssec:ABC}
%In computational simulations of hyperbolic partial differential equations (PDEs), 

When simulating long time behaviour of the solution, with waves that leave the computational domain, 
accurately imposing boundary conditions is essential to prevent unwanted reflective waves from interfering with the numerical solution. These nonphysical reflections can significantly compromise the accuracy of simulations, particularly in applications like sediment transport in the Exner model and gas dynamics in the Euler equations.

Reflective waves occur at the computational domain boundaries where the discretization does not perfectly align with the continuous boundary conditions. This misalignment causes waves to bounce back into the domain instead of exiting smoothly, leading to inaccuracies. To minimize these reflections and ensure that outgoing waves are effectively absorbed, efficient absorbing boundary conditions (ABCs) are necessary. Techniques such as relaxation methods and far-field operators are commonly used to achieve this. These approaches typically involve a sponge layer, a region within the computational domain where the numerical solution is damped to mitigate reflections. The main challenge is to implement these methods in a way that ensures computational efficiency while accurately absorbing outgoing waves, thereby maintaining the simulation's overall integrity \cite{Mayer1998,CarlosMunoz}.

In this section, we address the imposition of outflow boundary conditions for the one-dimensional Exner model. %, as spurious reflective waves can interact with the numerical solution, thereby actively affecting the accuracy of sediment evolution. Although a highly accurate numerical solution can be obtained within the interior of the domain, boundary discretization can lead to nonphysical reflections.
In order to address this issue, a class of relaxation methods based on far-field operators \cite{Karni,CarlosMunoz} offers a viable alternative to the technique developed in \cite{MaccaRussoBumi}, which involved adding an extra domain to dampen waves and consequently reduce reflections. The new technique, which also requires an additional domain, achieves excellent results without significantly increasing the size of the computational domain.

As discussed in \cite{CHOI2020}, the performance of this ABC is influenced by the selection of the weight function $\Gamma(x)$, which decreases from 1 to 0 within the sponge layer. The relaxation step involves updating the numerical solution $U^{n+1}$ by forming a convex combination of the computed solution $\widetilde{U}^{n+1}$ and a target function, typically the constant far-field state $U^{\rm eq}$. Karni \cite{Karni} emphasized that the weight function must meet the smooth transition condition to ensure the continuity of modal coefficients: $$ \Gamma(x_b) = 1, \quad \Gamma'(x_b) = \Gamma''(x_b) = \Gamma'''(x_b) = \ldots = 0, $$
being $[x_a,x_b]$ is the computational domain and $]x_b,\infty[$ is the absorbing domain. Following this criterion, the weight function proposed by Engsig-Karup et al. \cite{Engsig-Karup2006} is
$$
\Gamma(x) = -2 (1 - \varphi(x))^3 + 3 (1 - \varphi(x))^2,
$$
where 
$$
\varphi(x) = \left( \frac{x - x_b}{x_{c} - x_b} \right) \chi_{]x_b, x_{c}]}(x) + \chi_{]x_{c}, +\infty[}(x).
$$
Nevertheless, from a numerical point of view, it is advised that the length of the sponge layer $ |x_{c} - x_b| $ is greater than $2L$, where $L$ is the longest wavelength in the solution.

From a practical point of view, once the numerical solution $\tilde{U}^{n+1}$ has been obtained on the domain $[x_a,x_c]$, the numerical solution $U^{n+1}_i$ is defined as follows:
\begin{equation}
    U^{n+1}_i = \begin{cases}
        \tilde{U}^{n+1}_i \quad {\rm if} \; x_i\in[x_a,x_b]; \\ 
        \tilde{U}^{n+1}_i\Gamma(x_i) + U^{eq}(1-\Gamma(x_i)) \quad {\rm otherwise}.
    \end{cases}
\end{equation}

\section{Numerical Experiments} \label{sec:numerics}
In this paper, numerical tests related to the one-dimensional shallow water equations and Exner model with Grass equation are considered. 
Our methodology of testing relies on several classical and demanding test cases: 
\begin{enumerate}
    \item \textit{Order of accuracy}. These tests measure the scheme's capability to achieve high-order accuracy for smooth solutions in both the shallow water and Exner models.
    \item \textit{Well-balanced property.} This assessment focuses on the scheme's ability to preserve stationary solutions (e.g., lake at rest) up to machine precision.
    \item \textit{Riemann problems}. Various versions of 1D Riemann problems are simulated to check the shock capturing capabilities of the scheme. Specifically, we consider Riemann problems that satisfy the Rankine-Hugoniot condition. While the schemes are generally non-conservative, the conservation error remains acceptable\footnote{The non-conservative treatment is employed to avoid iterative processes during implicit stages.}.

    Additionally, we examine the schemes' performance with discontinuous problems to assess their capability to handle non-smooth cases. In such scenarios, the schemes could introduce spurious oscillations. This aspect is not the primary focus of this paper.
    \item \textit{Sediment evolution}. The non-conservative semi-implicit treatments aim to accurately capture the evolution of sediment, which typically occurs at a much slower rate compared to water and wave velocities.
    \item \textit{Semi-Analytical ODE-type}. This test focuses on a comparison of the non-conservative third-order semi-implicit scheme with a time-dependent semi-analytical solution, for a large domain and final time.
    \item \textit{Exner waves}. This test has been designed to highlight the effectiveness of the scheme in modelling sediment evolution when a wave train has been imposed on the surface. By varying the wave frequency and adopting different spatial and temporal discretizations, it is possible to achieve excellent results, even when the surface waves are not finely discretized in either space or time.
 
\end{enumerate}

In an explicit scheme, the time step $ \Delta t^n $ is determined by the CFL condition, expressed as  
\begin{equation}  
    \label{timestep_exp}  
    \Delta t^n = \mathrm{CFL_{\rm EXP}}\frac{\Delta x}{\lambda_{\max}^n},  
\end{equation}  
where $ \lambda_{\max}^n $ represents the maximum spectral radius of the Jacobian matrices $ \partial F / \partial U $ across all computational cells. This condition arises from the stability constraints of explicit schemes, which require that information does not travel more than one grid cell per time step. Consequently, explicit methods are often subject to stringent time step restrictions.  

In contrast, for a semi-implicit scheme, the time step $ \Delta t^n $ must satisfy the material CFL (MCFL) condition (see \eqref{eq:MCFL}), which is generally less restrictive than the standard explicit CFL condition\footnote{Empirical studies suggest that the material CFL condition should not exceed 0.5 \cite{Bosca2024}.}. This leads to the definition  
\begin{equation}  
    \label{timestep_mcfl}  
    \Delta t^n = \mathrm{MCFL}\frac{\Delta x}{u_{\max}^n},  
\end{equation}  
where $ u_{\max}^n $ denotes the maximum material speed.  

However, a more general formulation of the time step is required to fully leverage the advantages of the semi-implicit approach. Unlike fully explicit methods, which impose strict stability constraints due to the presence of fast waves, a semi-implicit scheme allows for a more relaxed condition. In this scheme, certain terms — typically the stiffest components of the system — are treated implicitly, thereby mitigating the stability limitations imposed by explicit time stepping. As a result, the time step is not necessarily restricted by the largest eigenvalue of the Jacobian but can instead be increased beyond the explicit CFL limit. 

Ideally the time step for the semi-implicit scheme should be dictated by accuracy requirements as well as by the CFL stability imposed by the explicit term. A detailed analysis of the most suitable time step controller is beyond the scope of the present paper, and will be subject of future investigation.

In the present paper we adopt an alternative CFL condition for the semi-implicit scheme, where the time step is given by  
\begin{equation}  
    \label{timestep_semi_implicit}  
    \Delta t^n = \min\Bigl(\mathrm{MCFL}\frac{\Delta x}{u_{\max}^n},\mathrm{CFL_{\rm IMEX}}\frac{\Delta x}{\lambda_{\max}^n}\Bigr).  
\end{equation}  
Here, the parameter $ \mathrm{CFL_{\rm IMEX}} $ is typically greater than 1, and is selected empirically based on accuraccy, rather than stability requirements. For this reason, the final time step is thus chosen as the minimum between the material CFL condition and the relaxed stability constraint of the IMEX scheme, ensuring a balance between computational efficiency and numerical accuracy.
Theoretically, when $u \approx 0$, the  time step is not limited by stability,  and in this case it depends only on the CFL$_{\rm IMEX}$\footnote{Since the CFL$_{\rm IMEX}$ could be greater than 1 without upper limitation, we impose CFL$_{\rm IMEX} \le \tilde{C}$, with $\tilde{C}$ a parameter that changes from test to test.}.

In all numerical experiments, the gravitational constant is set to $ g = 9.81\ \text{m/s}^2 $, while the numerical relative errors are evaluated using the $ L^1 $ norm, computed as  
\begin{equation}
    \textrm{Error} = \frac{||w_{\rm num} - w_{\rm ref}||_1}{||\bar{w} - w_{\rm ref}||_1},  
\end{equation}
where $ w_{\rm ref} $ denotes the reference solution obtained with the third-order semi-implicit scheme on a finer mashes; while $\bar{w}$ is the exact equilibrium.

\subsection{Order of accuracy} \label{ssec:accuracy}
This section is dedicated to verifying that the semi‐implicit schemes applied to the shallow water equations and the Exner model, as introduced earlier, accurately reproduce solutions that conform to the theoretical order of the scheme when the solutions are smooth. In addition to considering both the shallow water case—with and without bottom topography—and the Exner model, we also perform a rigorous numerical test to validate the accuracy of the fully third-order scheme and the simplified one. These numerical experiments are designed to confirm that the scheme achieves the expected third-order convergence, demonstrating the robustness of implicit discretization, as well as the effectiveness of explicit correction in improving spatial accuracy.

Given the small difference between the two methods, the rest of the tests are performed just with the simplified scheme.

\subsubsection{Shallow water with flat bottom} \label{sssec:acc:SW_flat} 
The common settings of this experiment are: $[x_a,x_b]=[-4,6]$ the interval; the CFL at initial condition is $15.4$, although it is adapted step by step such that MCFL$\leq 0.4$%, \footnote{\label{noteCFL_MCFL} The CFL condition is adapted step by step such that MCFL$=0.4$.}; 
, flat bottom topography $b(x) \equiv 0$ and  
\begin{comment}
\begin{equation}
    \label{Ord_acc_SW_1_eta_0} 
    U_0(x) =     \begin{bmatrix}
    	\eta_0(x) \\ q_0(x) 
     \end{bmatrix}   
     =    	
    \begin{bmatrix}
        \displaystyle 0.7 + 0.2 e^{-5(x-1)^2} \\ 
        0        
    \end{bmatrix}    
\end{equation}
\end{comment}

\begin{equation}
    \label{Ord_acc_SW_1_eta_0_1} 
    U_0(x) =     \begin{bmatrix}
    	\eta_0(x) \\ q_0(x) 
     \end{bmatrix}   
     =    	
    \begin{bmatrix}
        \displaystyle 0.7 + 0.2 e^{-3(x-1)^2} \\ 
        0        
    \end{bmatrix}    
\end{equation}

\begin{table}[!ht]
\numerikNine
\centering
\begin{tabular}{|c||cc|cc|cc|cc|}
\hline \multicolumn{9}{|c|}{\textbf{Shallow water - Rate of convergence}} \\
\hline  
\hline   
 & \multicolumn{4}{|c|}{\textbf{Simplified Scheme  \eqref{eta_1}-\eqref{q_1}}} & \multicolumn{4}{|c|}{\textbf{Fully Third Order Scheme \eqref{real_third_ord_eta}-\eqref{real_third_ord_q}}}  \\
 \hline  
 \hline   
& \multicolumn{2}{c|}{\textbf{$\eta$}} & \multicolumn{2}{c|}{\textbf{$q$}} & \multicolumn{2}{c|}{\textbf{$\eta$}} & \multicolumn{2}{c|}{\textbf{$q$}} \\ 
$N$ &  $L^1$ error &  order  & $L^1$ error &  order & $L^1$ error &  order & $L^1$ error &  order \\ \hline
& & & & \\[-3mm] 
100 & 5.54$\times$10$^{-1}$  &  --  & 5.37$\times$10$^{-1}$ & --   & 5.47$\times$10$^{-1}$  &  --  & 5.25$\times$10$^{-1}$ & --   \\ 
200 & 7.95$\times$10$^{-2}$  & 2.80 & 7.74$\times$10$^{-2}$ & 2.79 & 7.65$\times$10$^{-2}$  & 2.84 & 7.51$\times$10$^{-2}$ & 2.81 \\
400 & 1.42$\times$10$^{-2}$  & 2.49 & 1.41$\times$10$^{-2}$ & 2.46 & 1.39$\times$10$^{-2}$  & 2.46 & 1.41$\times$10$^{-2}$ & 2.42 \\
800 & 1.75$\times$10$^{-3}$  & 3.02 & 1.71$\times$10$^{-3}$ & 3.04 & 1.52$\times$10$^{-3}$  & 3.20 & 1.54$\times$10$^{-3}$ & 3.19 \\
1600& 2.86$\times$10$^{-4}$  & 2.61 & 2.86$\times$10$^{-4}$ & 2.58 & 2.01$\times$10$^{-4}$  & 2.92 & 2.09$\times$10$^{-4}$ & 2.87 \\
\hline
\end{tabular}
\caption{Test  \ref{sssec:acc:SW_flat}: Smooth perturbation of the lake at rest for shallow water on flat bottom with initial condition \eqref{Ord_acc_SW_1_eta_0_1}.  Convergence rates and $L^1$-norm errors for free-surface $\eta$ and $q$ between the numerical solution and the reference solution at $t_{\text{final}} = 1\,s$ on uniform mesh and CFL$ = 15.4$ such that MCFL$\leq0.4$. The reference solution has been computed on a $6400$ uniform mesh.}
\label{tab:Ord_acc_SW_flat_2}
\end{table}

% -------------
%
% --- Fig ---
\begin{figure}[!ht]
         \centering
         \includegraphics[width=0.49\textwidth]{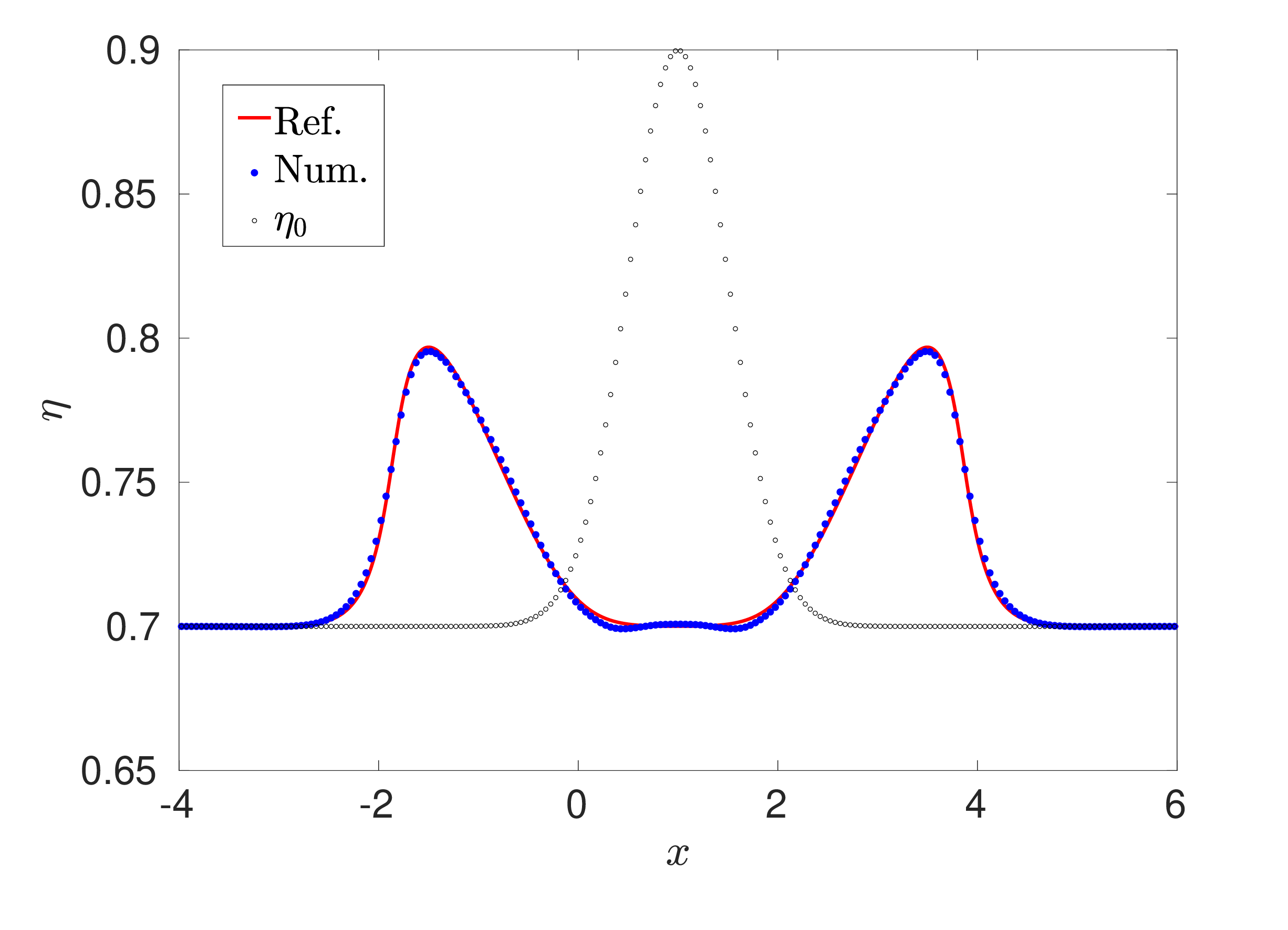}
         \includegraphics[width=0.49\textwidth]{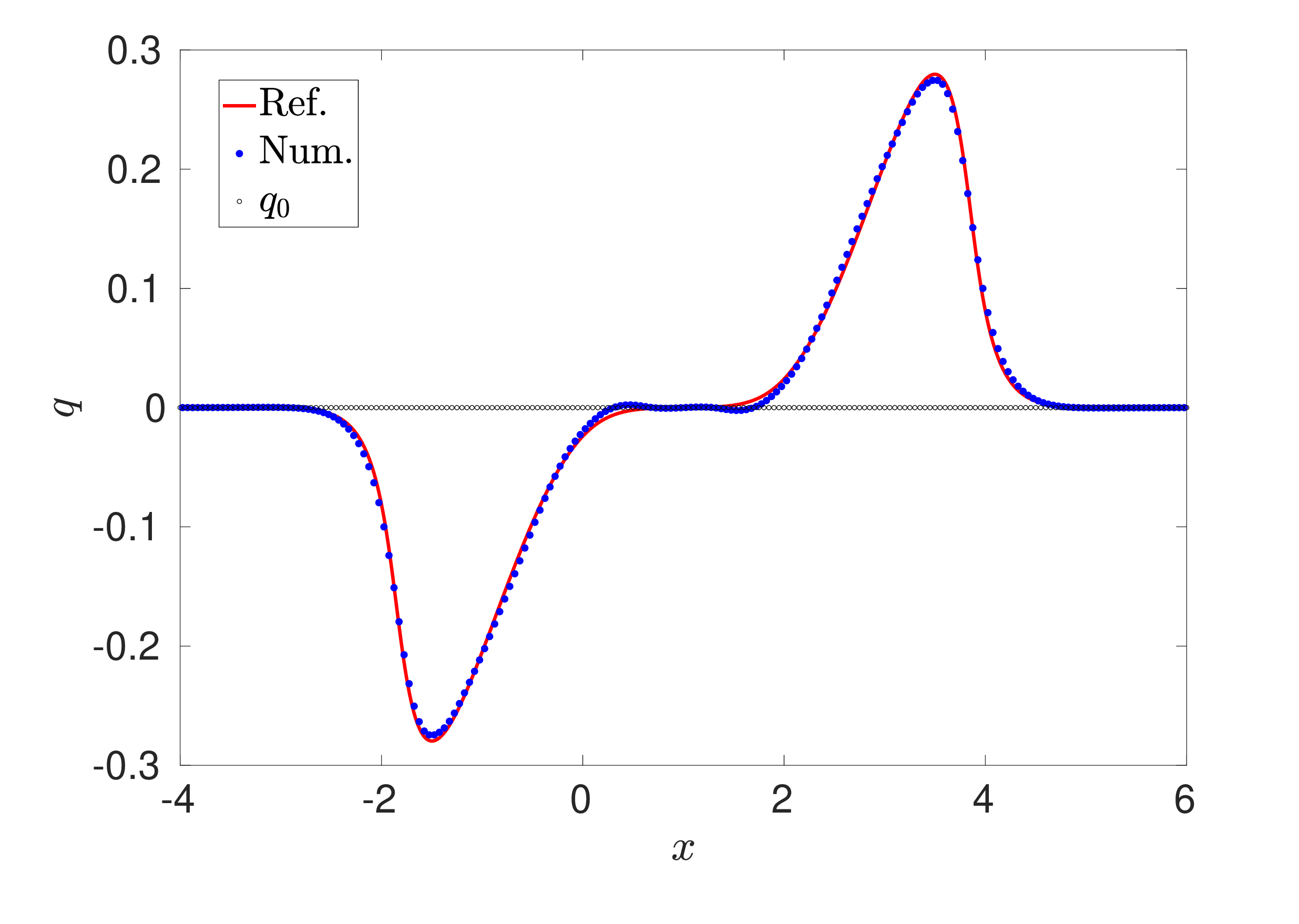}
     \caption{Test \ref{sssec:acc:SW_flat}: Order accuracy for shallow water with flat bottom topography. Initial conditions $(\eta_0,q_0)$ and numerical approximation obtained with a third-order semi-implicit staggered scheme adopting a $200$ uniform mesh and MCFL$=0.4$ at time $t=1\ s$. Free surface (top) and discharge (bottom). The reference solution has been obtained on a $6400$ uniform mesh.}
     \label{Exp:Acc:SW:Test_1_flat}
\end{figure}
% ---------
\begin{figure}[!ht]
     \centering
     \begin{subfigure}[b]{0.33\textwidth}
         \centering
         \includegraphics[width=\textwidth]{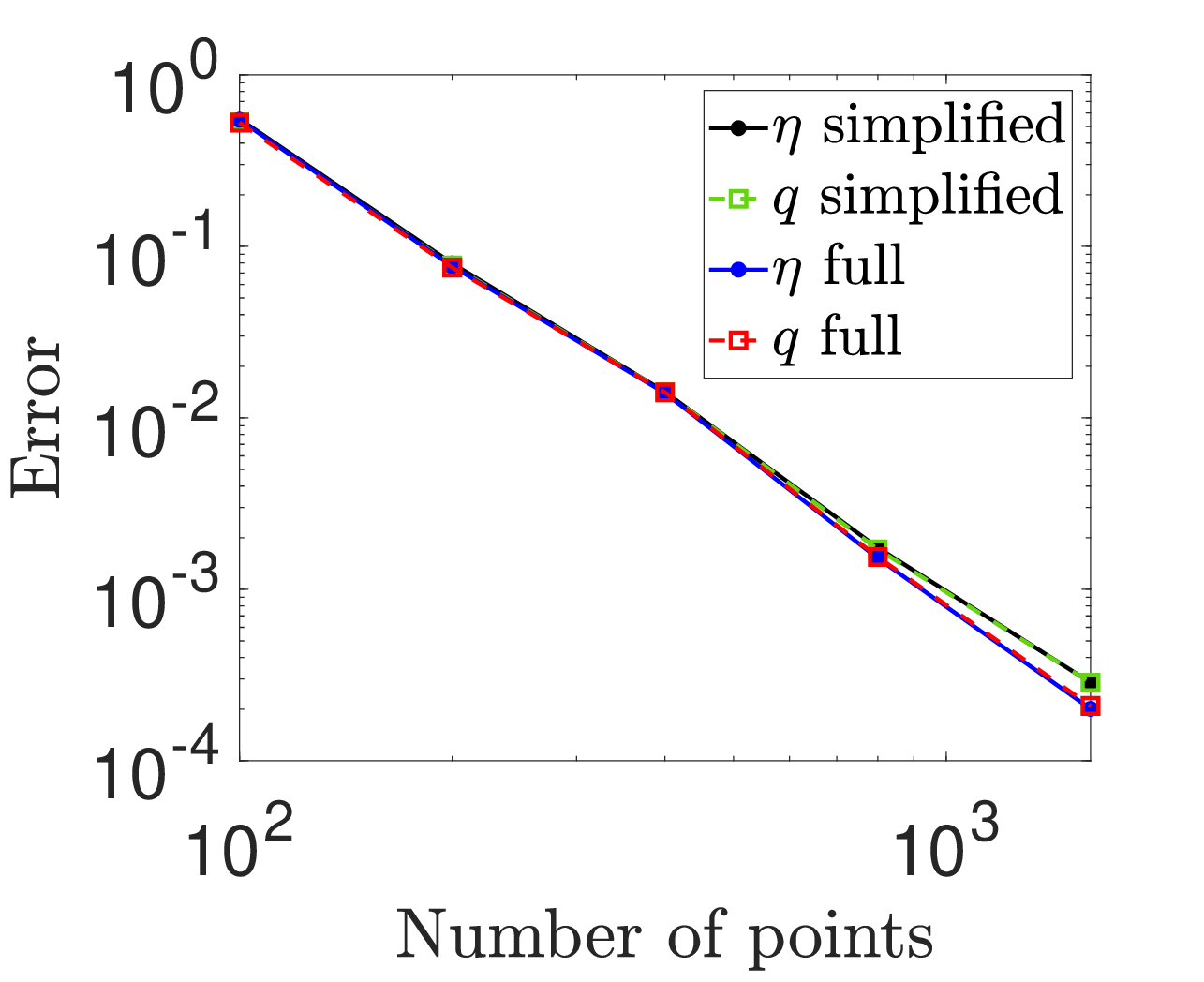}
         \caption{Points vs Errors in log scale.}
         \label{Exp:Acc:SW:P_E_flat}
     \end{subfigure}
     \hfill
     \begin{subfigure}[b]{0.33\textwidth}
         \centering
         \includegraphics[width=\textwidth]{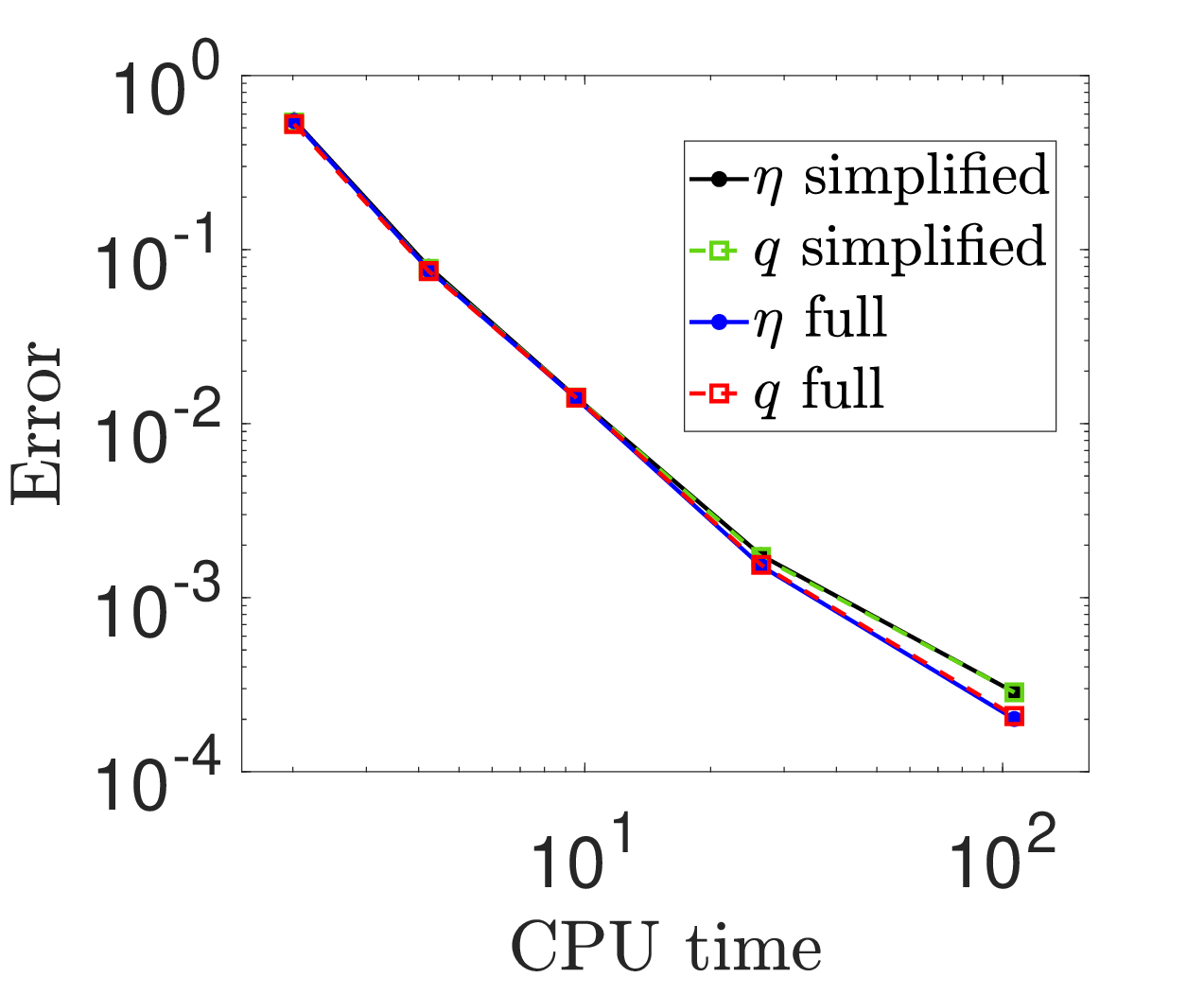}
         \caption{CPU times vs Errors in log scale.}
         \label{Exp:Acc:SW:CPU_E_flat}
     \end{subfigure}
     \hfill
     \begin{subfigure}[b]{0.29\textwidth}
         \centering
         \includegraphics[width=\textwidth]{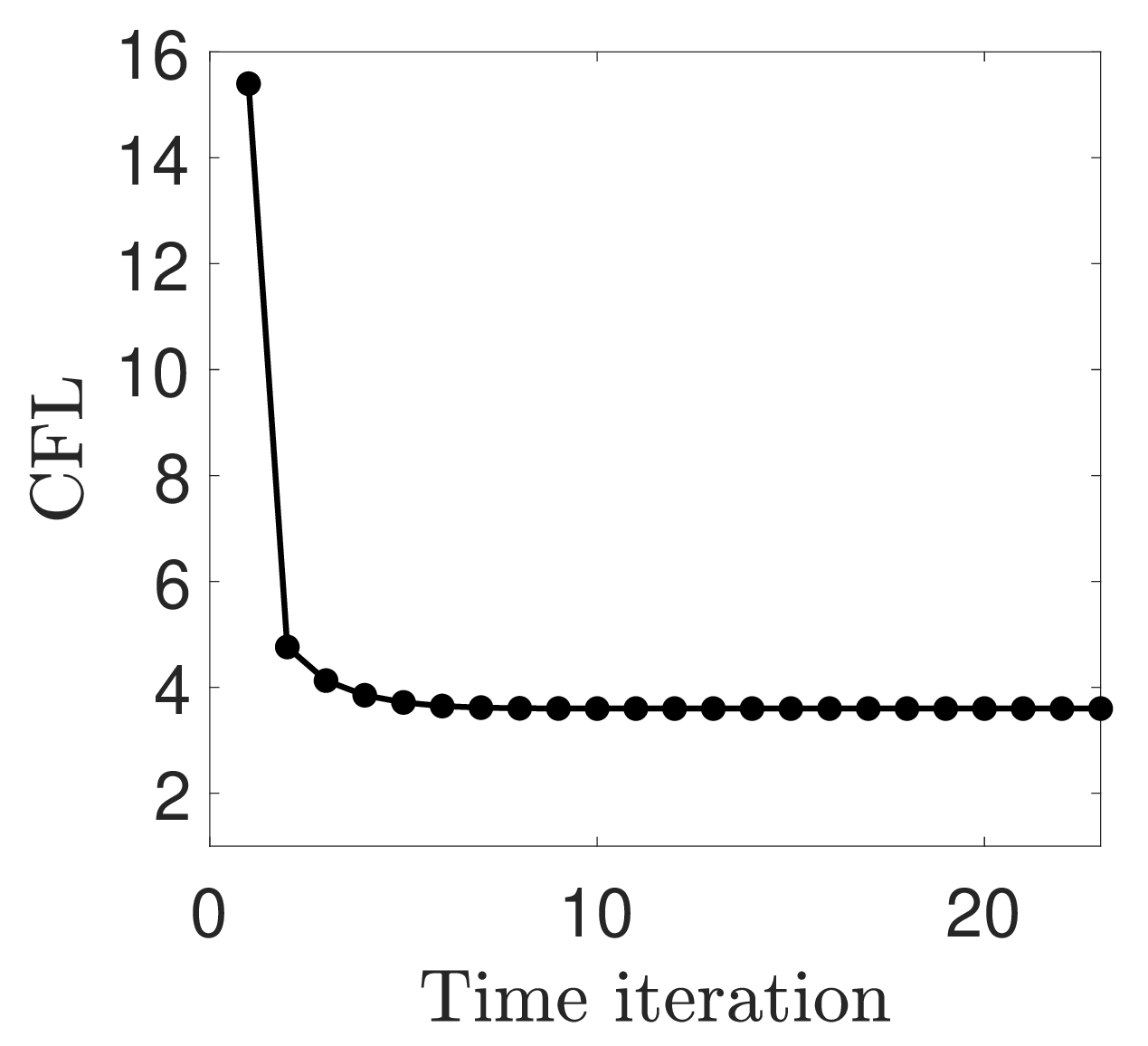}
         \caption{CFL for each time iteration.}
         \label{Exp:Acc:SW:CFL_flat}
     \end{subfigure}
     \caption{Test \ref{sssec:acc:SW_flat}: Order accuracy for shallow water with flat bottom topography. L$^1$ errors vs number of points and L$^1$ errors vs cpu time in seconds in log scale for $\eta$ and $q$, respectively \ref{Exp:Acc:SW:P_E_flat}-\ref{Exp:Acc:SW:CPU_E_flat}. CFL condition at each time step for the numerical solution computed on a $400$ uniform mesh \ref{Exp:Acc:SW:CFL_flat}. The final time is $0.8\ s$ and the reference solution has been obtained on a $6400$ uniform mesh.} 
     \label{Exp:Acc:SW:CPU_CFL_flat}
\end{figure}
% --------------
% 

Figure~\ref{Exp:Acc:SW:Test_1_flat} depicts the initial condition, the reference solution obtained on a uniform grid of 6400 points, and the numerical approximation for $\eta$ and $q$ at the final time $t = 0.8\ s$ computed on a uniform grid with 200 points and a MCFL value of $0.4$. 

In Table~\ref{tab:Ord_acc_SW_flat_2} the convergence rates and L$^1$-norm errors are presented, including those computed for the fully third-order scheme and the simplified one. It is evident from the data that the errors and convergence rates of the two schemes are quite similar, with a noticeable difference only for fine grids, which justifies neglecting the explicit term $\Delta_3q_i$ in practical applications. Moreover, in Figure~\ref{Exp:Acc:SW:CPU_CFL_flat}, the relationship between the number of points and the error is shown, together with the computational cost and the error on the log scale \ref{Exp:Acc:SW:P_E_flat}-\ref{Exp:Acc:SW:CPU_E_flat}. The progression of the CFL iteration through iterations is also shown in figure \ref{Exp:Acc:SW:CFL_flat}. We see that the CFL decreases to 3.6 approximately after a few iterations due to MCFL $\leq 0.4$.

\subsubsection{Shallow water with non-flat bottom} \label{sssec:acc:SW_no_flat} 
We consider a similar test configuration, except for the bottom and free surface. In this case, we define
\begin{equation}
	\label{Ord_acc_SW_2_eta_0} 
	b(x) = 0.01 + 0.2 e^{-10(x-1)^2}; \qquad	U_0(x) = 
	\begin{bmatrix}
		0.7 - 0.2 \, e^{-\ha(x-1)^2} \\ 0.01                
	\end{bmatrix}  ,  
	\end{equation}

% ----- Tab ----------
\begin{table}[!ht]
\numerikNine
\centering
\begin{tabular}{|c||cc|cc|}
\hline \multicolumn{5}{|c|}{\textbf{Shallow water - Rate of convergence}} \\
\hline  
\hline   
& \multicolumn{2}{c|}{\textbf{$\eta$}} & \multicolumn{2}{c|}{\textbf{$q$}} \\ 
$N$ &  $L^1$ error &  order & $L^1$ error &  order\\ \hline 
& & & & \\[-3mm]
100 & 1.42$\times$10$^{-1}$  &  --  & 3.03$\times$10$^{-0}$ & --   \\ 
200 & 1.37$\times$10$^{-2}$  & 3.37 & 3.46$\times$10$^{-1}$ & 3.13 \\
400 & 1.92$\times$10$^{-3}$  & 2.83 & 4.54$\times$10$^{-2}$ & 2.93 \\
800 & 1.77$\times$10$^{-4}$  & 3.44 & 4.14$\times$10$^{-3}$ & 3.45 \\
1600& 2.34$\times$10$^{-5}$  & 2.91 & 5.52$\times$10$^{-4}$ & 2.91 \\
\hline
\end{tabular}
\caption{Test \ref{sssec:acc:SW_no_flat}: Smooth initial condition for shallow water with non-flat bottom. Convergence rates and $L^1-$norm errors for free-surface $\eta$ and $q$ between the numerical solution and the reference solution at $t_{\text{final}} = 0.8\ s$ on uniform mesh and CFL$ = 15.4$ such that MCFL$\leq0.4.$ The reference solution has been computed on a $6400$ uniform mesh.}
\label{tab:Ord_acc_SW_no_flat}
\end{table}
% -------------
%
% -------------
%\begin{figure}[!ht]
%	\centering
%		\includegraphics[width=0.5\textwidth]{Figures/Experiments/Accuracy/SW/m_Test_SW_2_IC_eta.eps}
%\caption{Test \ref{sssec:acc:SW_no_flat}: Order accuracy for shallow water with non-flat bottom topography. Initial condition.}
%\label{Exp:Acc:SW:Test_2_no_flat_ic}
%\end{figure}
			
\begin{figure}[!ht]
         \centering
         \includegraphics[width=0.49\textwidth]{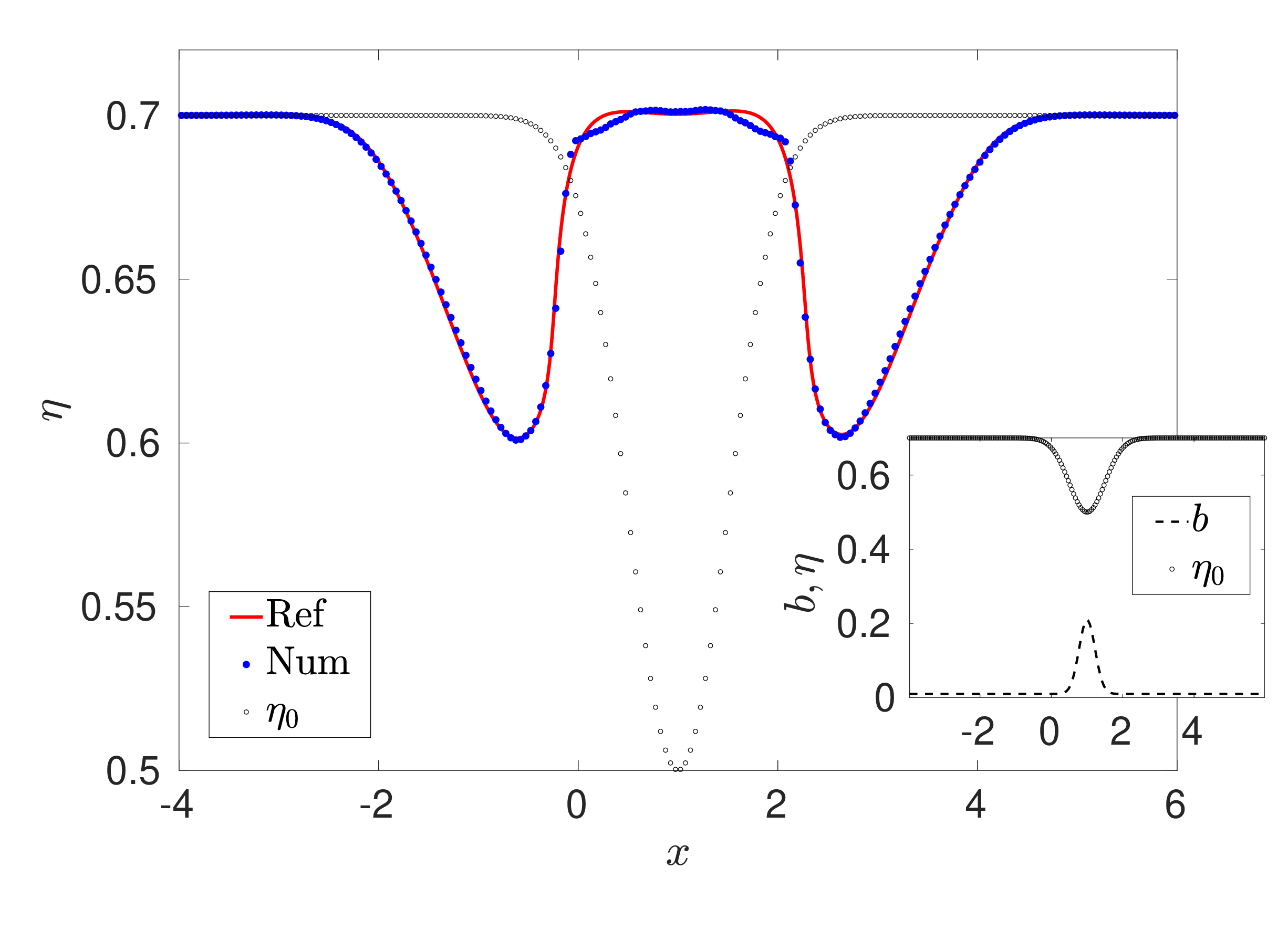}
        \includegraphics[width=0.49\textwidth]{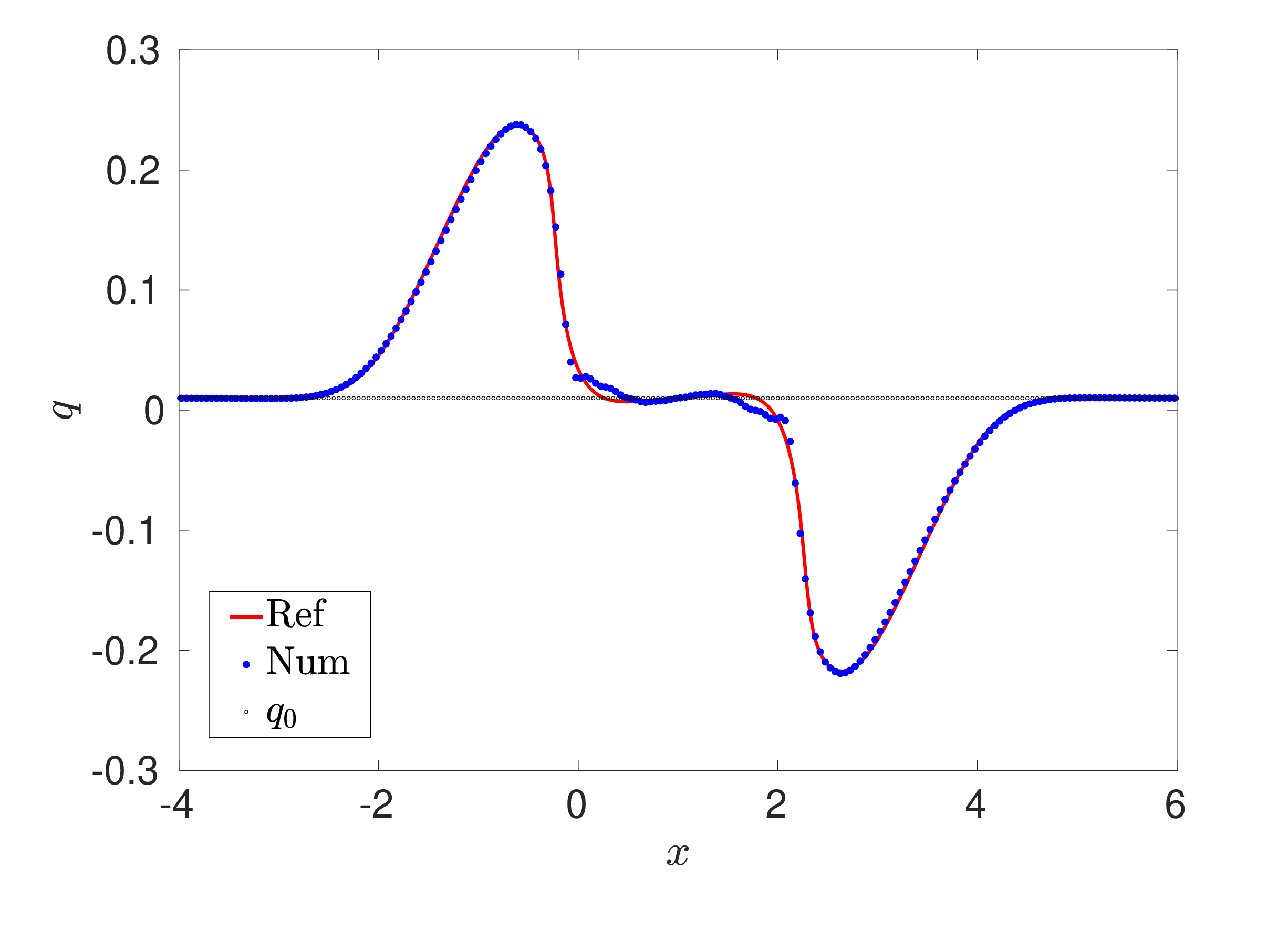}
     \caption{Test \ref{sssec:acc:SW_no_flat}: Order accuracy for shallow water with non-flat bottom topography. Comparison of the numerical approximation obtained with the third-order semi-implicit staggered scheme adopting a $200$ uniform mesh and MCFL$\leq0.4$ at time $t=0.8\ s$ with a reference solution. Free surface (right) and discharge (left). The reference solution has been obtained on a $6400$ uniform mesh. Inner figure in left figure is the initial condition for the bottom and free surface.}
     \label{Exp:Acc:SW:Test_2_no_flat}
\end{figure}
% ---------
\begin{figure}[!ht]
     \centering
     \begin{subfigure}[b]{0.32\textwidth}
         \centering
         \includegraphics[width=\textwidth]{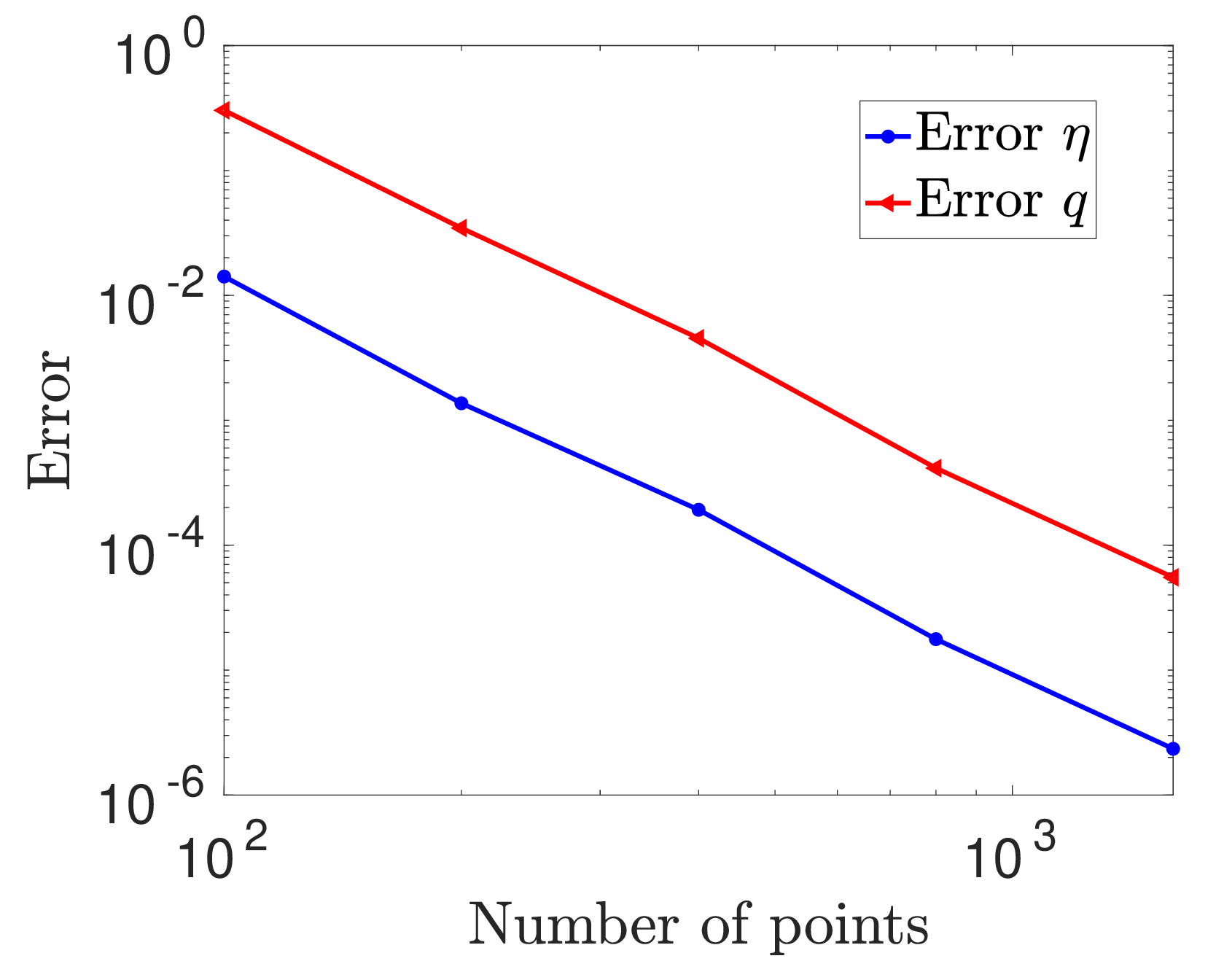}
         \caption{Points vs Errors in log scale.}
         \label{Exp:Acc:SW:P_E_no_flat}
     \end{subfigure}
     \hfill
     \begin{subfigure}[b]{0.32\textwidth}
         \centering
         \includegraphics[width=\textwidth]{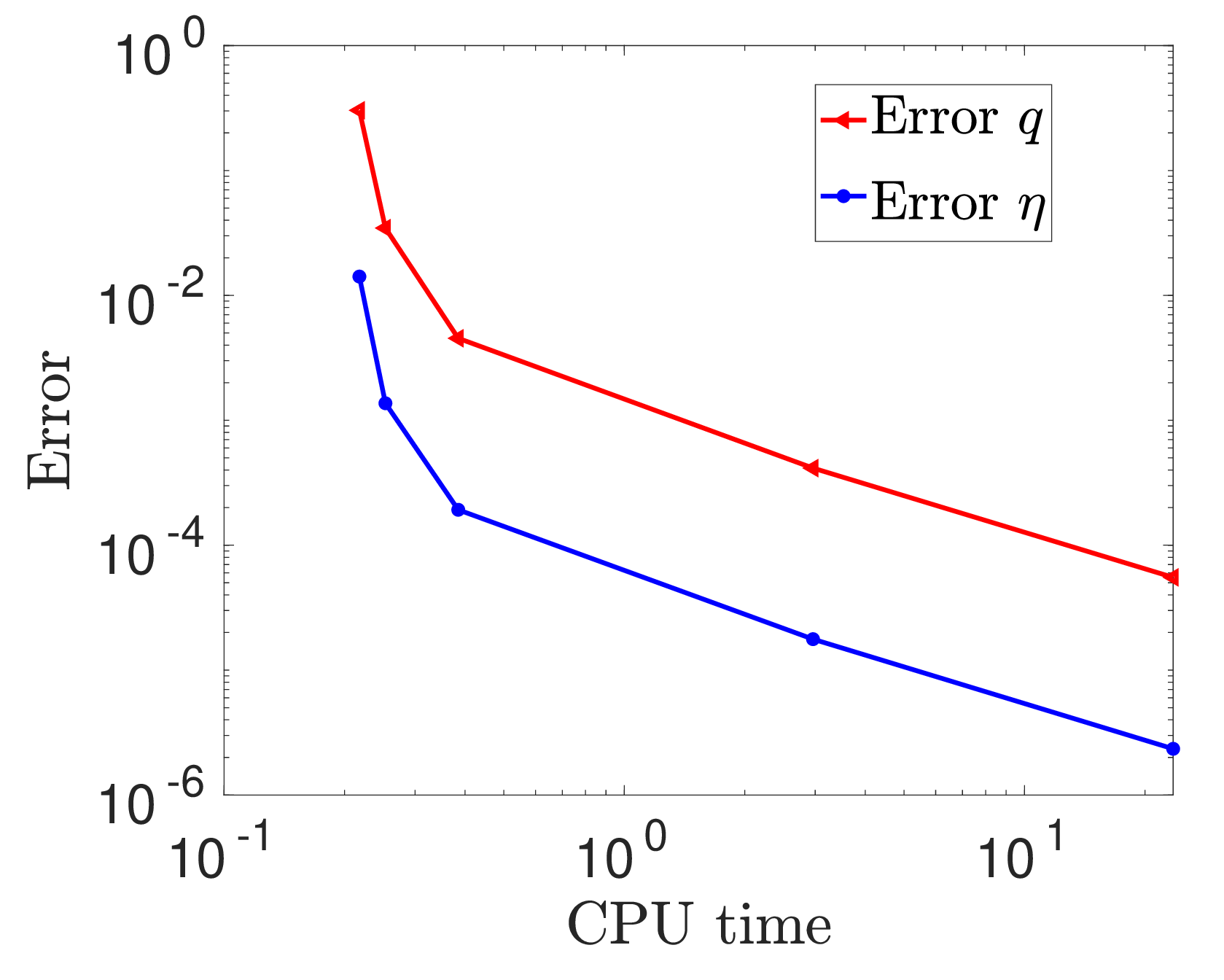}
         \caption{CPU times vs Errors in log scale.}
         \label{Exp:Acc:SW:CPU_E_no_flat}
     \end{subfigure}
     \hfill
     \begin{subfigure}[b]{0.32\textwidth}
         \centering
         \includegraphics[width=\textwidth]{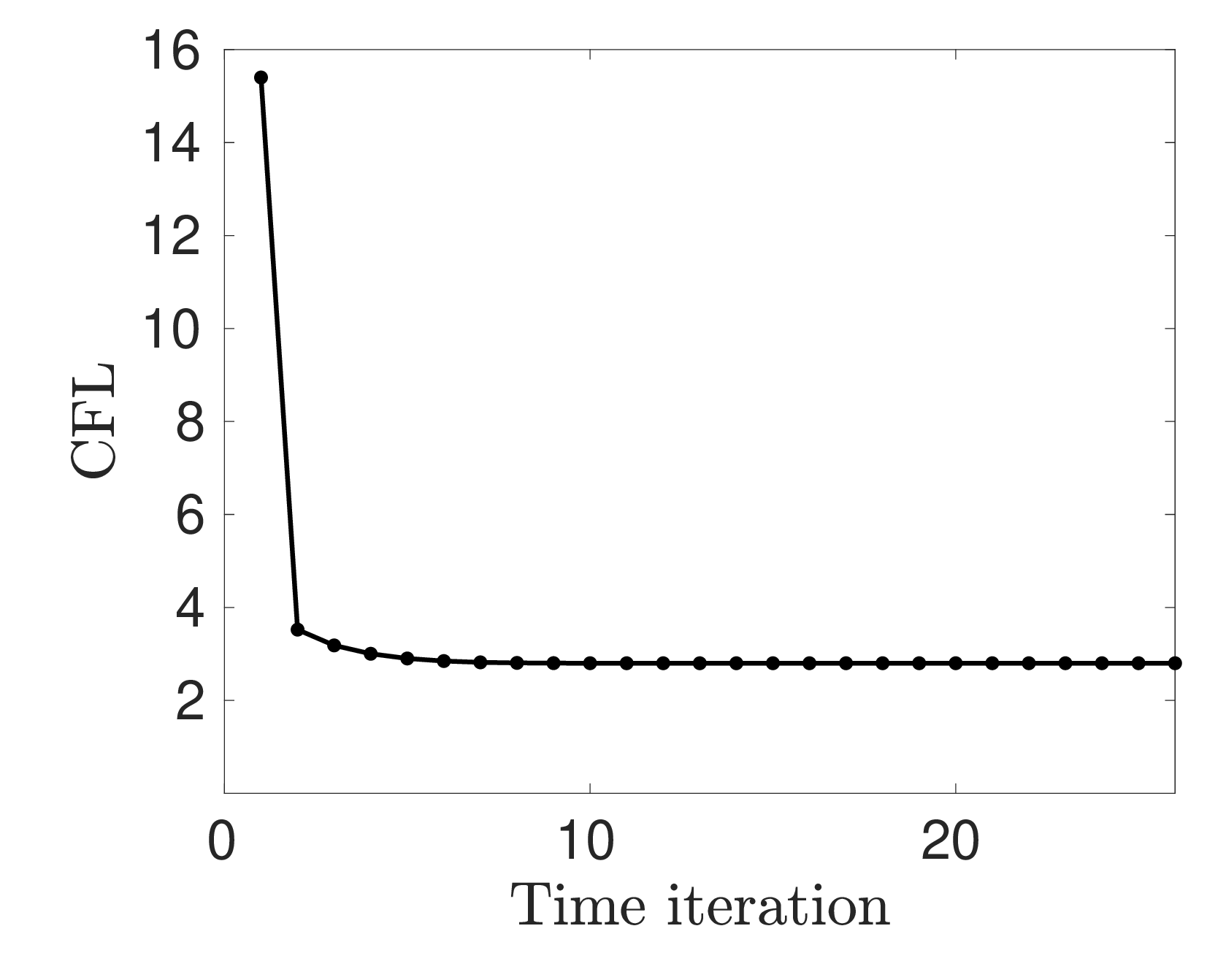}
         \caption{CFL for each time iteration.}
         \label{Exp:Acc:SW:CFL_no_flat}
     \end{subfigure}
     \caption{Test \ref{sssec:acc:SW_no_flat}: Order accuracy for shallow water with non-flat bottom topography \ref{sssec:acc:SW_no_flat}. L$^1$ errors vs number of points and L$^1$ errors vs cpu time in seconds in log scale for $\eta$ and $q$, respectively \ref{Exp:Acc:SW:P_E_no_flat}-\ref{Exp:Acc:SW:CPU_E_no_flat}. CFL condition at each time step for the numerical solution computed on a $400$ uniform mesh \ref{Exp:Acc:SW:CFL_no_flat}. The final time is $0.8\ s$ and the reference solution has been obtained on a $6400$ uniform mesh. }
     \label{Exp:Acc:SW:CPU_CFL_no_flat}
\end{figure}
% --------------
% 

%{\color{red}{\bf Modificar si se deja como inner figure} Figure~\ref{Exp:Acc:SW:Test_2_no_flat_ic} depicts the initial condition for the free surface and the bottom defined by equation (\ref{Ord_acc_SW_2_eta_0}). In Figure \ref{Exp:Acc:SW:Test_2_no_flat}, the reference solution obtained on a uniform grid of 6400 points, and the numerical solution for $\eta$ and $q$ at the final time $t = 0.8\ s$ computed on a uniform grid with 200 points and a MCFL value of $0.4$ are presented.  }
Figure ~\ref{Exp:Acc:SW:Test_2_no_flat} depicts the numerical and reference solutions obtained in this case, as well as the initial condition for the free surface and bottom (see inner figure).

Table~\ref{tab:Ord_acc_SW_no_flat} presents the convergence rates and errors in the L$^1$-norm. Moreover, in Figure~\ref{Exp:Acc:SW:CPU_CFL_no_flat}, the relationship between the number of points and the error is displayed, along with the computational cost and error in log scale \ref{Exp:Acc:SW:P_E_no_flat}-\ref{Exp:Acc:SW:CPU_E_no_flat}. The progression of the CFL number with the iterations is also shown in this figure \ref{Exp:Acc:SW:CFL_no_flat}. In this case, the CFL decreases to 2.9 after some iterations.

\subsubsection{Exner model with non-flat sedimentation} \label{sssec:acc:Ex_flat}
We consider now a similar accuracy test for the Exner case.
The common settings of this experiment are: $[x_a,x_b]=[-200,200]$ the interval; the CFL at initial condition is $3.43$; flat bottom topography $b(x) \equiv 0.01$; $A_g = 0.1;$ $ m_g = 3$ and $\xi = 1/(1-\rho_0)$ with $\rho_0 = 0.2.$  The initial condition is 
\begin{equation}
    \label{Ord_acc_Ex_1_eta_0} 
    W_0(x) = \begin{bmatrix}
    	\eta_0(x) \\ q_0(x) \\ z_{b_0}(x) 
    	\end{bmatrix}
    =
    \begin{bmatrix}
        10 \\ 
        10    \\ 
        0.1 + 2e^{-x^8/10^{14}}
    \end{bmatrix}    .
\end{equation}
% 
% FIG--------
\begin{figure}[!ht]
	\centering
	\includegraphics[width=0.49\textwidth]{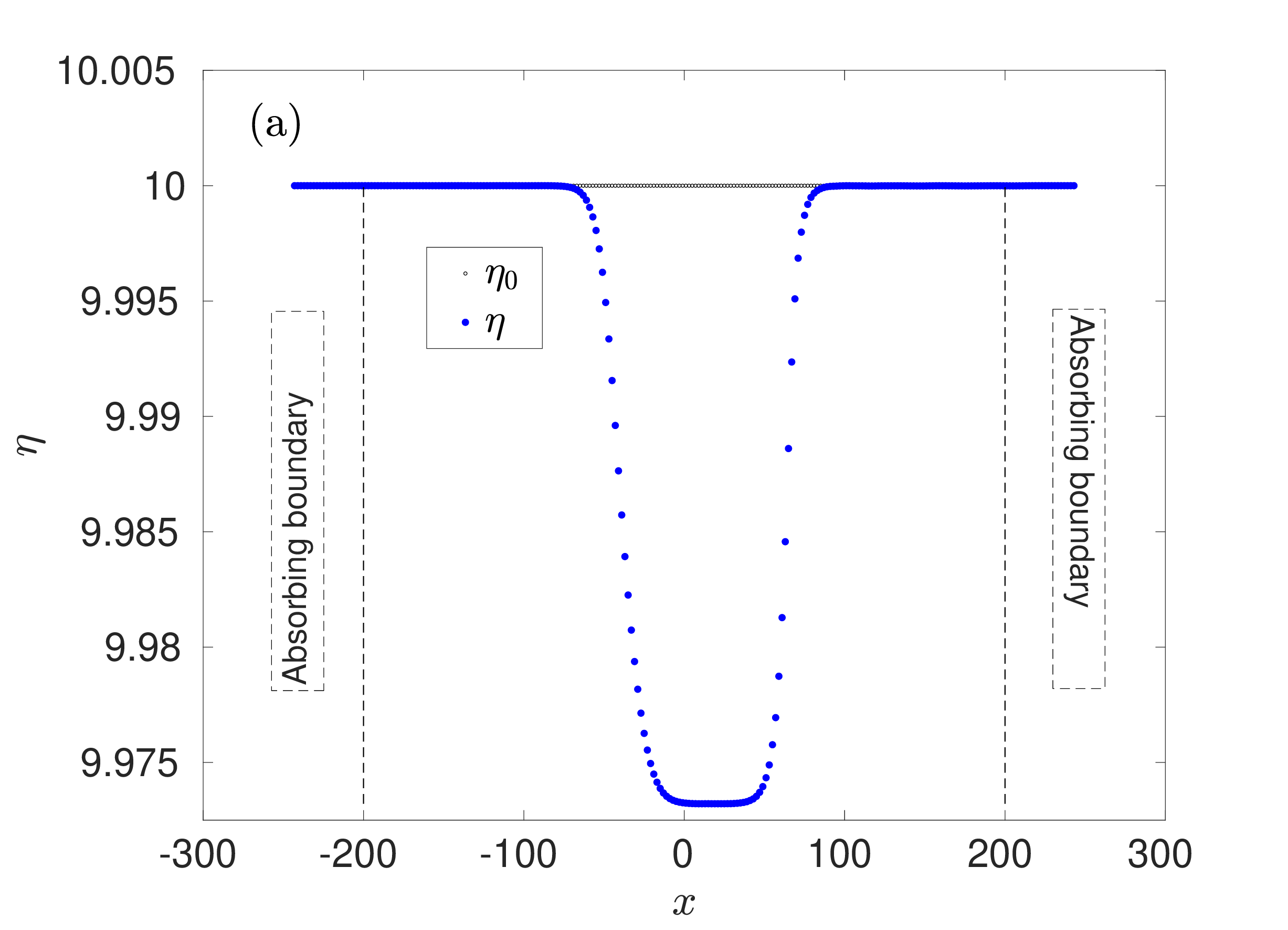}
	\includegraphics[width=0.49\textwidth]{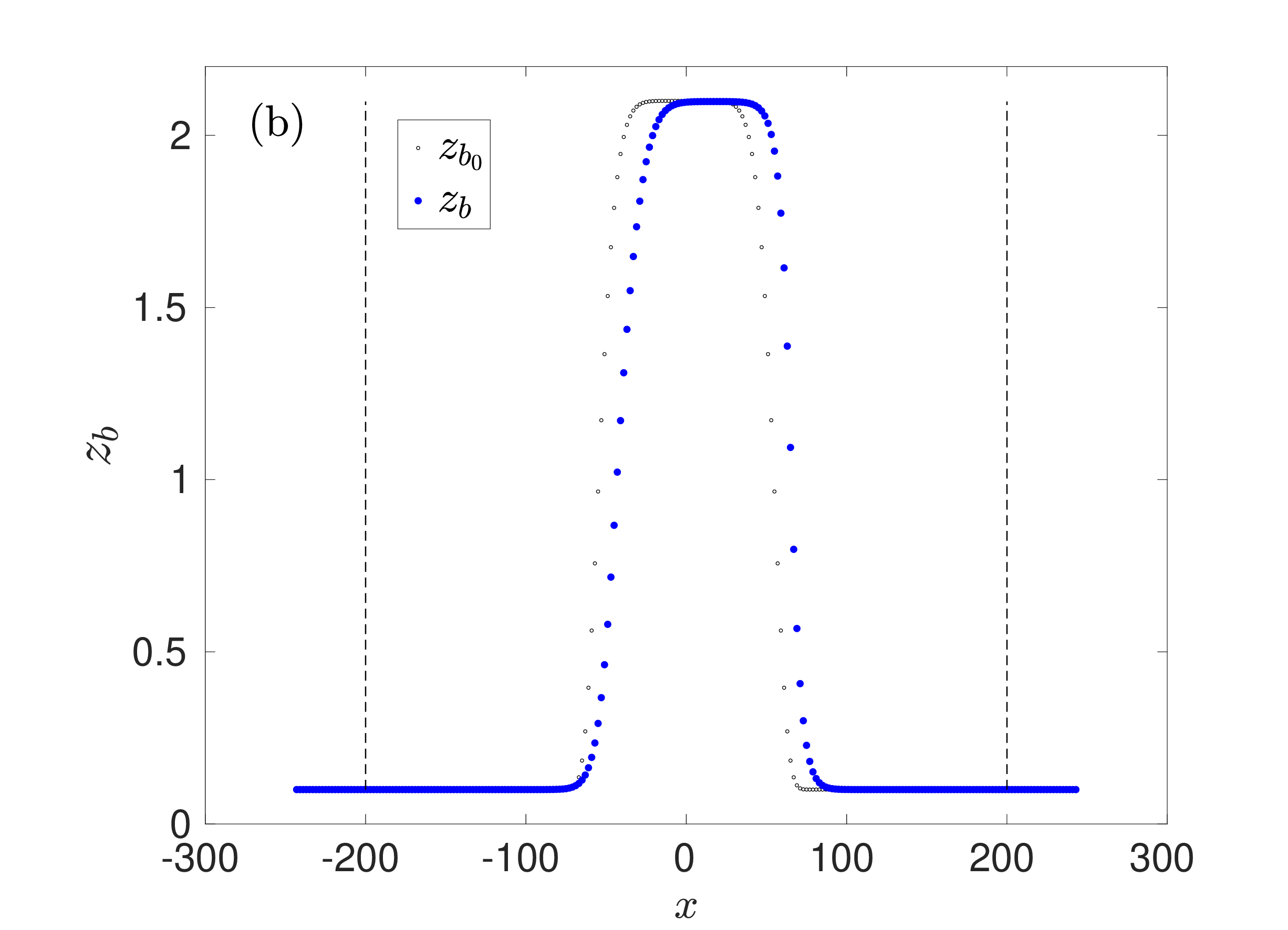}
	\includegraphics[width=0.49\textwidth]{Figures/Experiments/Accuracy/EX/m_zb.eps}
	\includegraphics[width=0.49\textwidth]{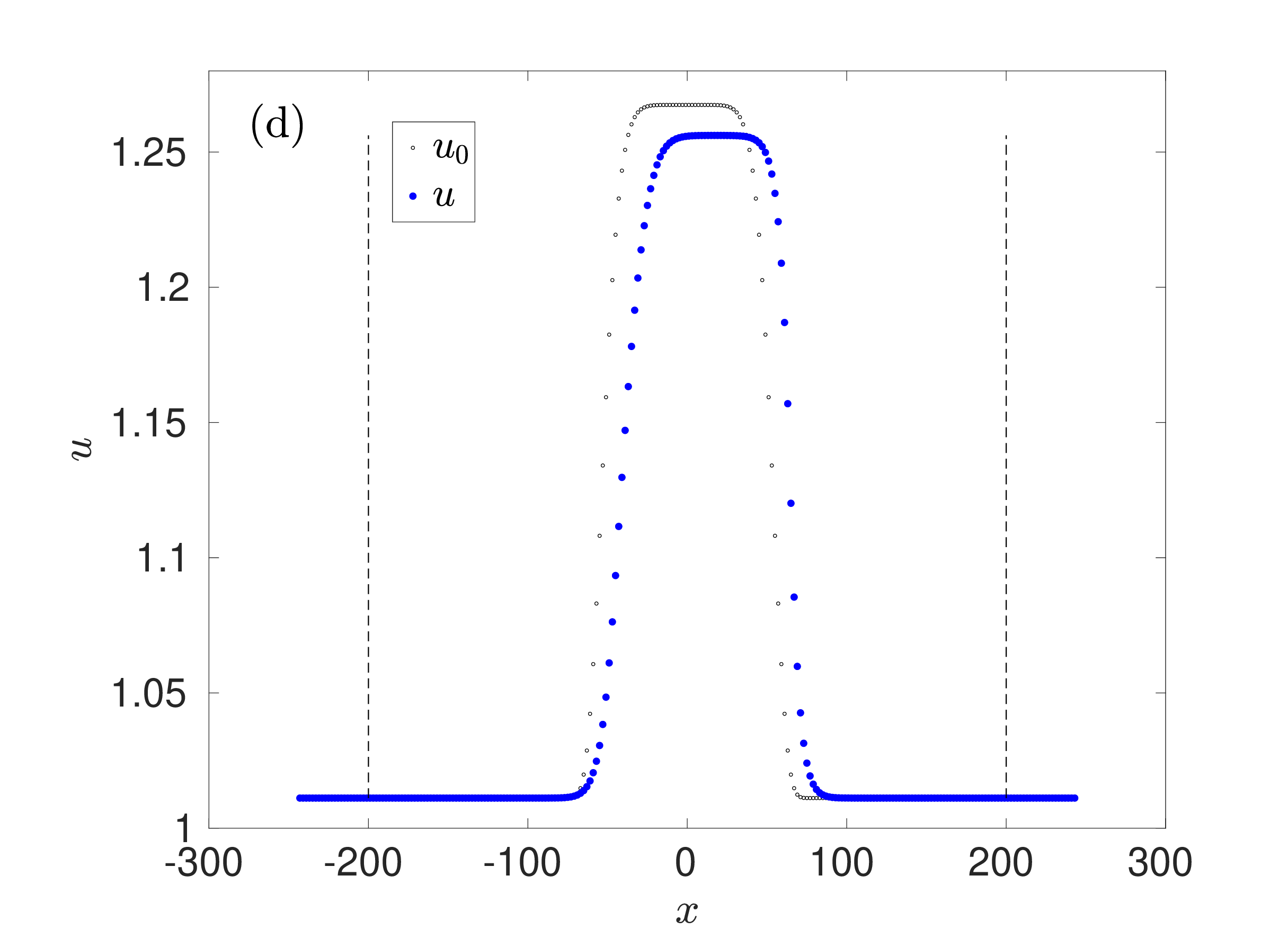}
	\caption{Test \ref{sssec:acc:Ex_flat}: Accuracy test for Exner model. Numerical solutions for (a) $\eta$; (b) $z_b$; (c) $q$; (d) $u$ obtained with the third-order semi-implicit staggered scheme adopting a $200$ uniform mesh and CFL$\approx 3.4$ at time $t=200\ s$. $A_g$ is equals to $0.1$ and $m_g = 3.$}
	\label{Exp:AC:Ex:Jose}
\end{figure}
% ---------
% 
% ---------
\begin{figure}[!ht]
     \centering
     \begin{subfigure}[b]{0.32\textwidth}
         \centering
         \includegraphics[width=\textwidth]{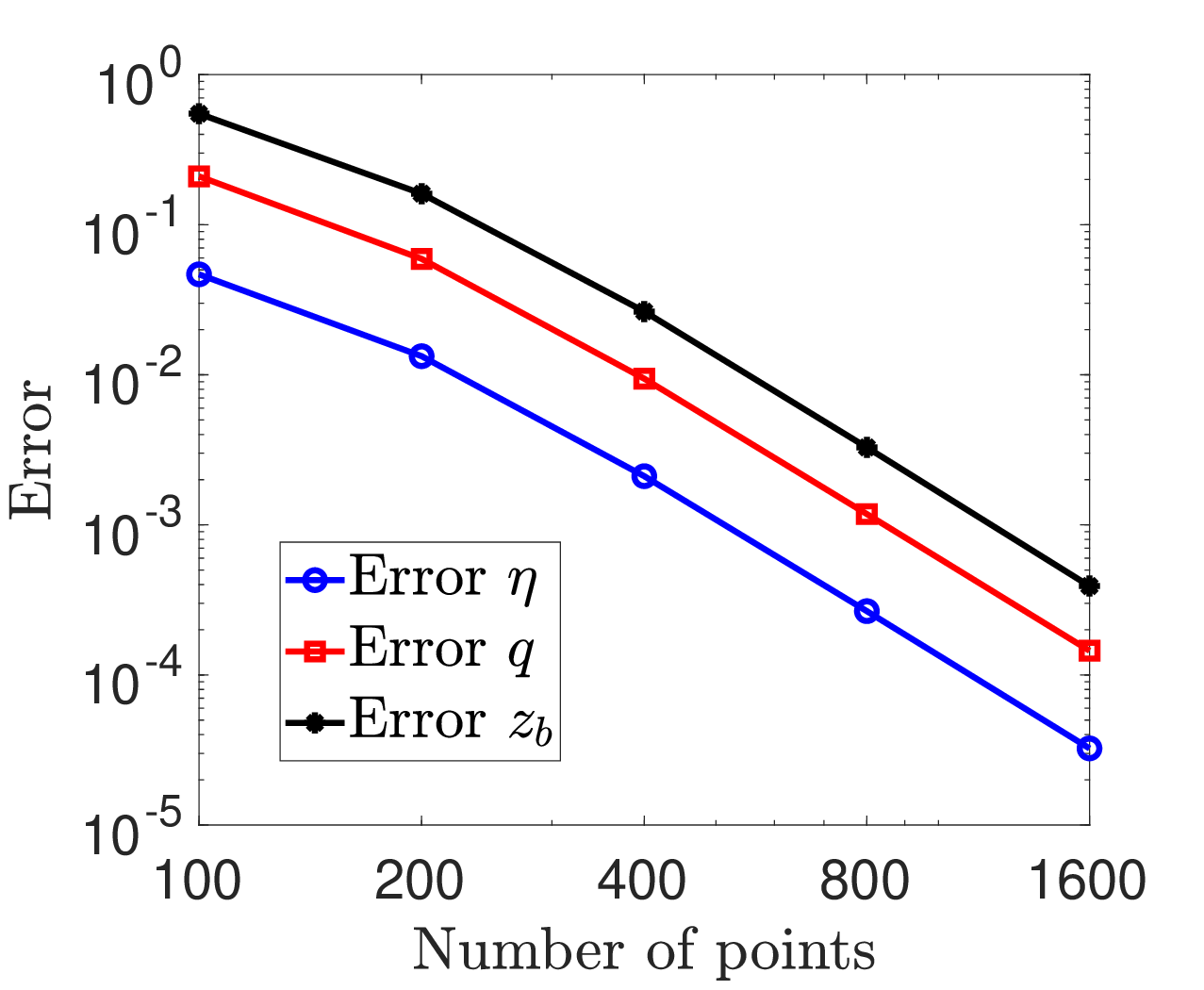}
         \caption{Points vs Errors in log scale.}
         \label{Exp:Acc:Ex:P_E_flat}
     \end{subfigure}
     \hfill
     \begin{subfigure}[b]{0.32\textwidth}
         \centering
         \includegraphics[width=\textwidth]{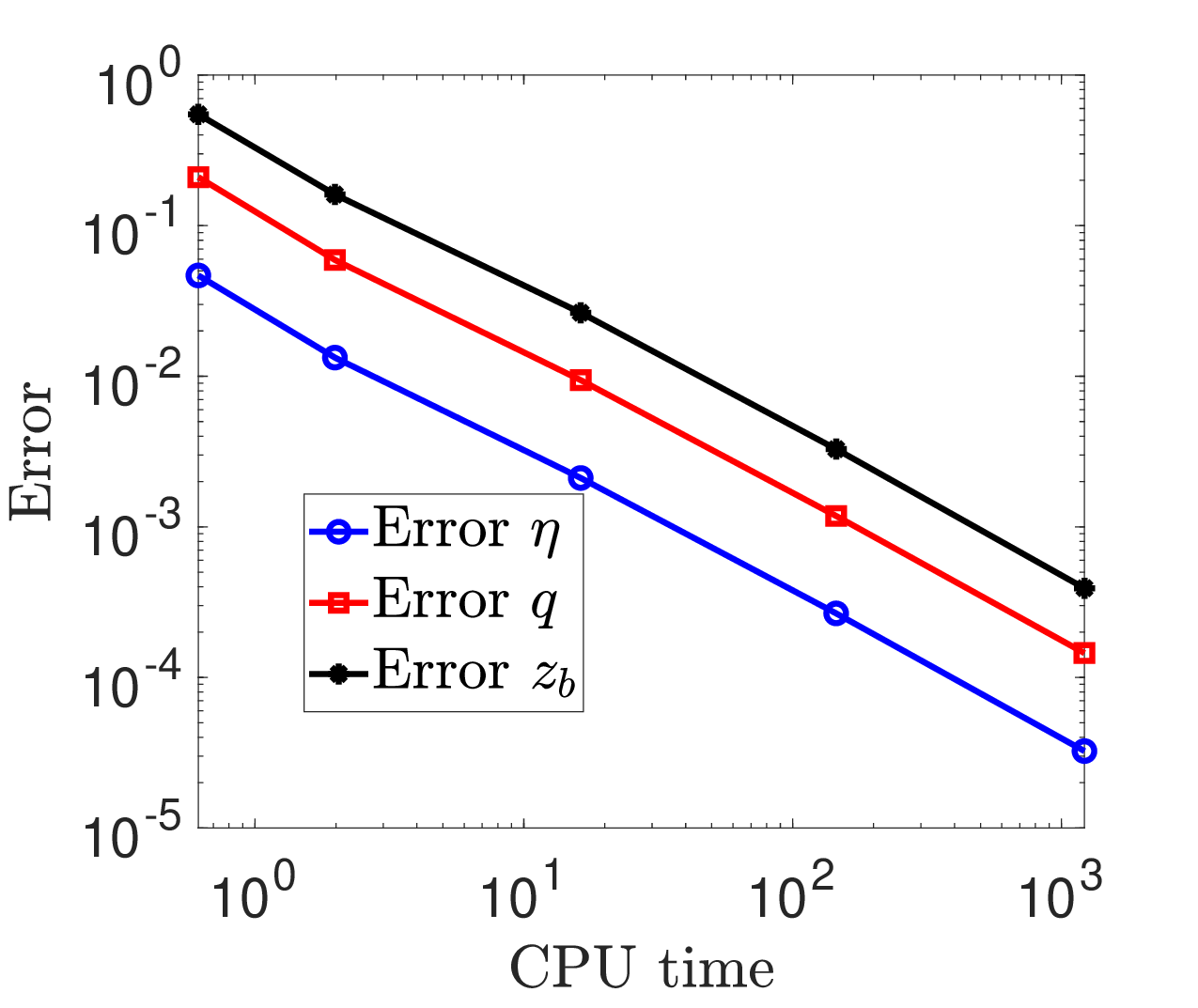}
         \caption{CPU times vs Errors in log scale.}
         \label{Exp:Acc:Ex:CPU_E_flat}
     \end{subfigure}
     \hfill
     \begin{subfigure}[b]{0.32\textwidth}
         \centering
         \includegraphics[width=\textwidth]{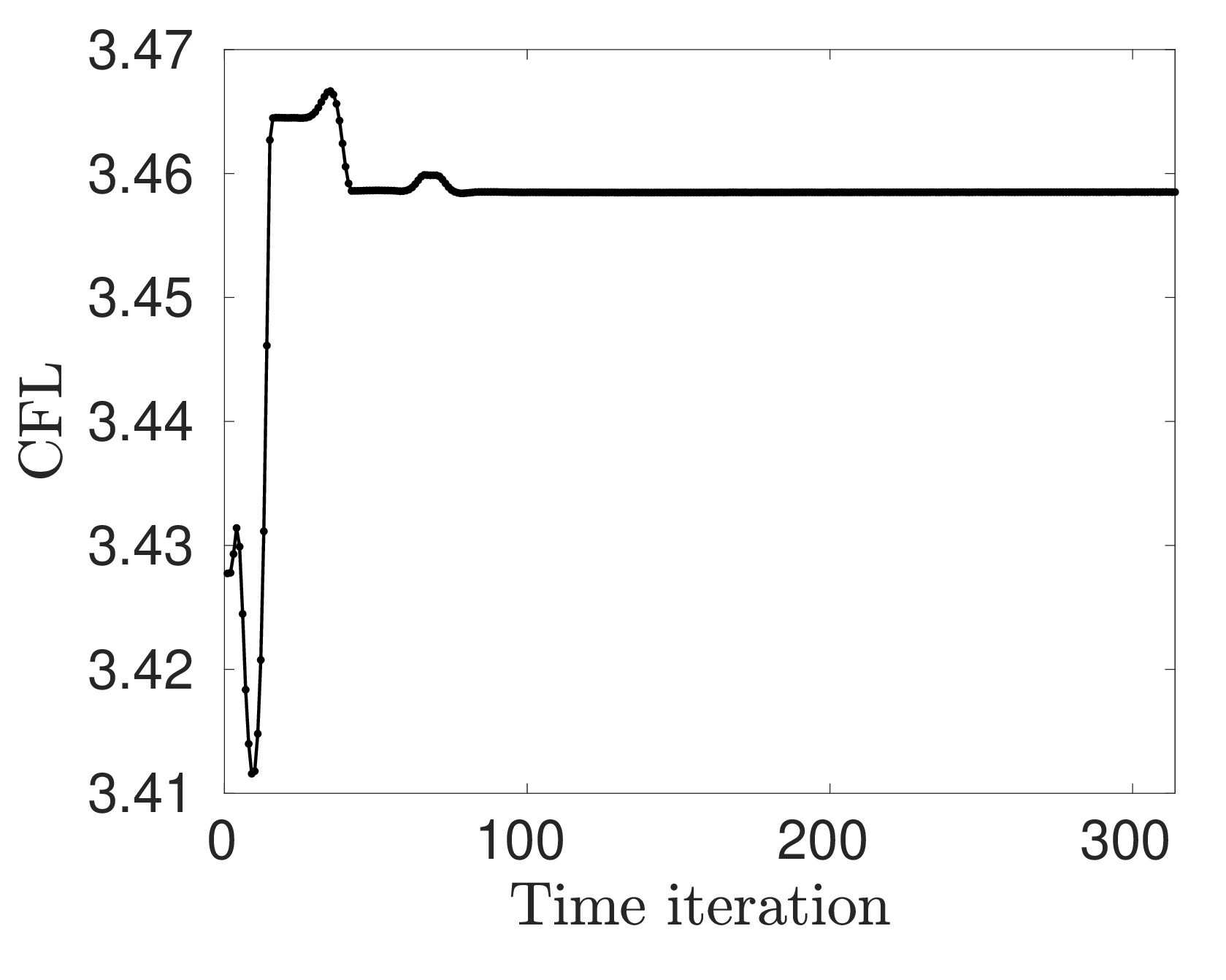}
         \caption{CFL for each time iteration.}
         \label{Exp:Acc:Ex:CFL_flat}
     \end{subfigure}
     \caption{Test \ref{sssec:acc:Ex_flat}: Order of accuracy for Exner model with flat bottom topography . L$^1$ errors vs number of points and L$^1$ errors vs cpu time in seconds in log scale for $\eta$, $q$ and $z_b$, respectively \ref{Exp:Acc:Ex:P_E_flat}-\ref{Exp:Acc:Ex:CPU_E_flat}. CFL condition at each time step for the numerical solution computed on a $200$ uniform mesh \ref{Exp:Acc:Ex:CFL_flat}. The final time is $200\ s$ and the reference solutions have been computed on a $6400$ uniform mesh. The errors have been computated in the computational domain $[-200,200]$. }
     \label{Exp:Acc:Ex:CPU_CFL_flat}
\end{figure}

% ---------
% 
% ----- Tab ----------
\begin{table}[!ht]
\numerikNine
\centering
\begin{tabular}{|c||cc|cc|cc|}
\hline \multicolumn{7}{|c|}{\textbf{Exner model - Rate of convergence}} \\
\hline  
\hline   
& \multicolumn{2}{c|}{\textbf{$\eta$}} & \multicolumn{2}{c|}{\textbf{$q$}} & \multicolumn{2}{c|}{\textbf{$z_b$}} \\ 
$N$ &  $L^1$ error &  order & $L^1$ error &  order & $L^1$ error &  order\\ \hline 
& & & & & &  \\[-3mm]
100  & 4.67$\times$10$^{-2}$  &  --  & 2.10$\times$10$^{-1}$ & --   &  5.49$\times$10$^{-1}$ & --   \\ 
200  & 1.33$\times$10$^{-2}$  & 1.81 & 5.92$\times$10$^{-2}$ & 1.83 &  1.61$\times$10$^{-1}$ & 1.77 \\
400  & 2.11$\times$10$^{-3}$  & 2.64 & 9.42$\times$10$^{-3}$ & 2.65 &  2.64$\times$10$^{-2}$ & 2.61 \\
800  & 2.66$\times$10$^{-4}$  & 2.99 & 1.18$\times$10$^{-3}$ & 2.99 &  3.28$\times$10$^{-3}$ & 3.01 \\
1600 & 3.24$\times$10$^{-5}$  & 3.04 & 1.45$\times$10$^{-4}$ & 3.03 &  3.91$\times$10$^{-4}$ & 3.07 \\
\hline
\end{tabular}
\caption{Test \ref{sssec:acc:Ex_flat}: Smooth initial condition for Exner model with flat bottom.  Convergence rates and $L^1-$norm errors for free-surface $\eta$, $q$ and $z_b$ computed with the Richardson extrapolation at $t_{\text{final}} = 200\ s$ on uniform mesh and CFL$ \approx 3.4$ such that MCFL$=0.4.$}
\label{tab:Ord_acc_Ex_flat}
\end{table}
%---------
%
Figure~\ref{Exp:AC:Ex:Jose} shows the initial and final solutions for $\eta$ (top-left); $z_b$ (top-right); $q$ (bottom-left) and $u$ (bottom-right) obtained with the third-order semi-implicit scheme developed above computed on a $200$ uniform mesh, CFL $\approx$ 3.4, see Figure~\ref{Exp:Acc:Ex:CFL_flat}, at final time $t = 200\ s$.  In Figure~\ref{Exp:Acc:Ex:CPU_CFL_flat} the relationships between the number of points and the errors are displayed, along with the computational costs and errors in log scale (see Figure \ref{Exp:Acc:Ex:P_E_flat}-\ref{Exp:Acc:Ex:CPU_E_flat}). The progression of the CFL across iterations is also shown in Figure \ref{Exp:Acc:Ex:CFL_flat}. As can be observed, at the initial time step, the CFL number is set to $3.45$ since the restriction due to the material waves, $\lambda_{\rm max}^n/u_{\rm max}^n$, impose a CFL lower than $8$. However, from the first iteration onwards, the surface waves generated by the sedimentation $z_b$ automatically reduce the CFL number such that the MCFL condition is satisfied. Once these waves leave the computational domain $[x_a, x_b]$, the CFL number returns to $3.45$. To prevent wave reflection due to the boundary conditions, two absorption domains, namely $[x_d, x_a]$ and $[x_b, x_c]$, each with a width of $45\ m$, are used at the boundaries of the computational domain (see \cite{Karni}). In figure~\ref{Exp:AC:Ex:Jose} the absorbing boundary domain are identified by dashed grey lines. These domains, with the absorbing technique presented above, ensure the passage of waves through $x_a$ and $x_b$ while automatically damping reflected ones. The final time is $200\ s$, while the errors are computed with a reference solution obtained with a $6400$ uniform mesh. Additionally, Table~\ref{tab:Ord_acc_Ex_flat} presents the convergence rates and errors in the $L^1$-norm.

\subsection{Well-balanced}
\label{ssec:well_balanced}
This section aims to demonstrate that the semi-implicit schemes employed for the shallow water equations and the Exner model, as previously introduced, possess the capability to maintain a static water surface (referred to as "lake at rest") with precision extending to machine-level accuracy. 
\subsubsection{Shallow water} \label{sssec:WB:SW}
Let us consider a similar configuration of tests \ref{sssec:acc:SW_flat} and \ref{sssec:acc:SW_no_flat}, thus flat and non-flat bottom topography, defined by $b(x) = 0.1 + \epsilon e^{-10(x-1)^2}$, with $\epsilon=0$ and $\epsilon = 0.2$, respectively.

\begin{figure}[!ht]
	\centering
		\centering
		\includegraphics[width=0.45\textwidth]{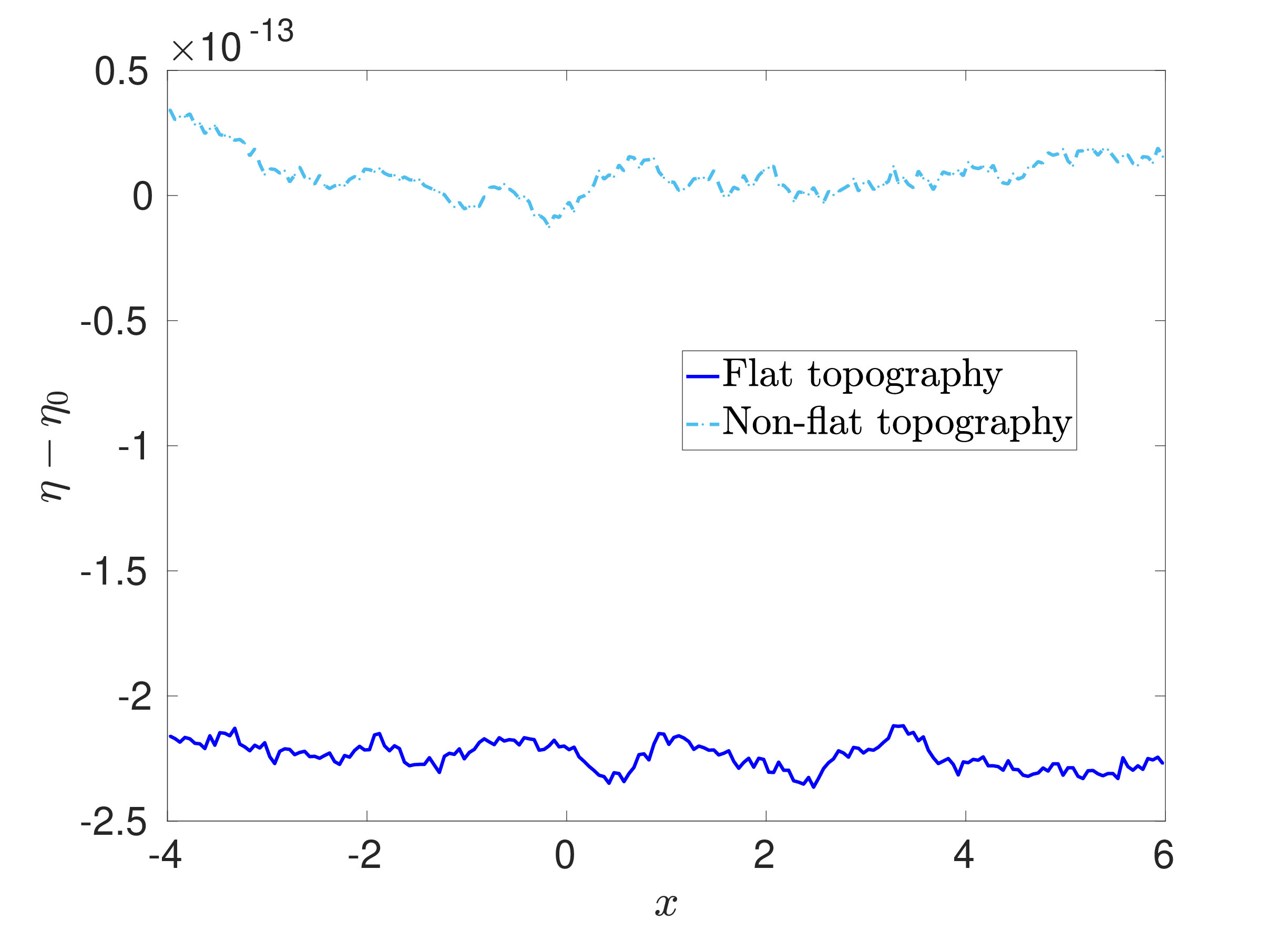}
		 \includegraphics[width=0.45\textwidth]{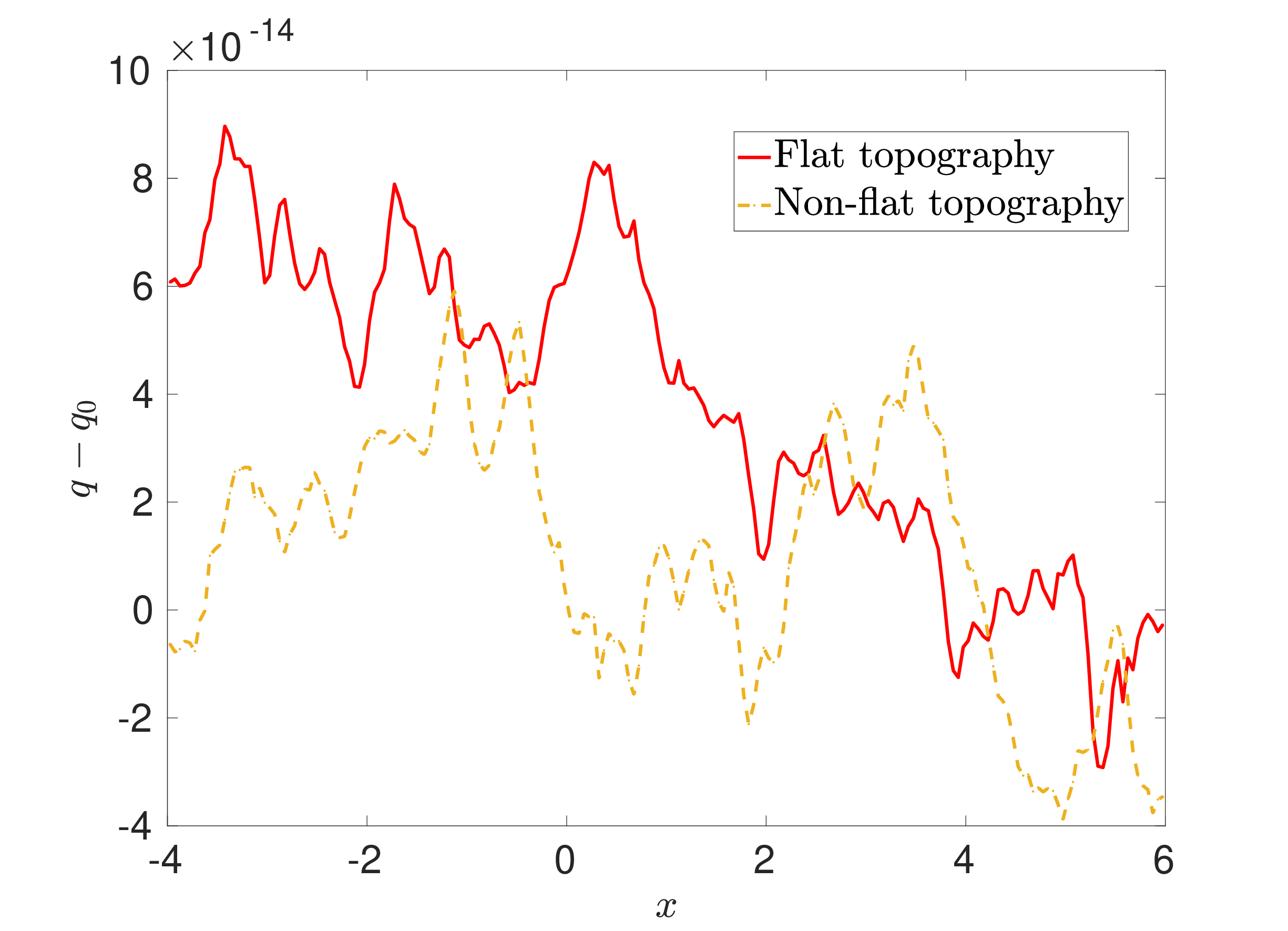}  
	\caption{Test \ref{sssec:WB:SW}: Well-balanced for shallow water with flat  and no-flat  bottom topography . Difference between numerical solution and stationary solution for $\eta$ (left) and $q$ (right) obtained with a third-order semi-implicit staggered scheme adopting a $200$ uniform mesh and CFL$=40$ at time $t=80\ s$.}
	\label{Exp:WB:SW:Test_1_flat}
\end{figure}

% -----------------
% 
% -----------------
%\begin{figure}[!ht]
%     \centering
%     \begin{subfigure}[b]{0.49\textwidth}
%         \centering
%         \includegraphics[width=0.9\textwidth]{Figures/Experiments/WB/SW/m_SW_WB_1_eta.eps} \includegraphics[width=0.9\textwidth]{Figures/Experiments/WB/SW/m_SW_WB_1_q.eps}  
%         \caption{Difference between numerical solution and stationary solution for $\eta$ (left) and $q$ (right) with flat topography.}
%         \label{Exp:WB:SW:flat}
%     \end{subfigure}
%     \hfill
%     \begin{subfigure}[b]{0.49\textwidth}
%         \centering
%         \includegraphics[width=0.9\textwidth]{Figures/Experiments/WB/SW/m_SW_WB_2_eta.eps}
%         \\ \includegraphics[width=0.9\textwidth]{Figures/Experiments/WB/SW/m_SW_WB_2_q.eps}
%         \caption{Difference between numerical solution and stationary solution for $\eta$ (left) and $q$ (right) with non-flat topography.}
%         \label{Exp:WB:SW:no_flat}
%     \end{subfigure}
%     \caption{Test \ref{sssec:WB:SW}: Well-balanced for shallow water with flat  and no-flat  bottom topography . Different between numerical solution and stationary solution for $\eta$ (left) and $q$ (right) obtained with a third-order semi-implicit staggered scheme adopting a $200$ uniform mesh and CFL$=40$ at time $t=80$.}
%     \label{Exp:WB:SW:Test_1_flat}
%\end{figure}
% -------------
% 
\begin{equation}
    \label{WB_SW_1_eta_0} 
    U_0(x) = 
    \begin{bmatrix}
        0.7  \\
        0        
    \end{bmatrix}   . 
\end{equation}

Figure~\ref{Exp:WB:SW:Test_1_flat} show the difference between the numerical solutions and the stationary solutions for $\eta$ and $q$, considering both flat  and non-flat  bottom topography at the final time $t = 80\ s$. These computations were performed on a uniform grid with 200 points and a CFL value of $40$. In both scenarios, the semi-implicit scheme effectively preserves the stationary solution up to machine precision, irrespective of the shape of the fixed bottom.

\subsubsection{Exner model} \label{sssec:WB:EX}
Following a similar configuration of Test \ref{sssec:WB:SW}, $[x_a,x_b]=[-4,6]$ is the interval; $g$ is the gravitational constant $g=9.81$ $m/s^2$; the CFL at initial condition is $40$. In this test it is shown the well-balanced properties of the method when the sediment layer is defined as the sum of a flat bottom topography $b(x)  \equiv 0 $ plus a sediment layer because there is no movement of the fluid and Grass model is considered. We consider $A_g=0.01$  the interaction between fluid and sediment; $m_g = 3$ and $\xi = 1/(1-\psi_0)$ with uniform porosity $\psi_0 = 0.2$. The initial condition is  
% -----------------
% 
\begin{figure}[!ht]
	\centering
	\includegraphics[width=0.5\textwidth]{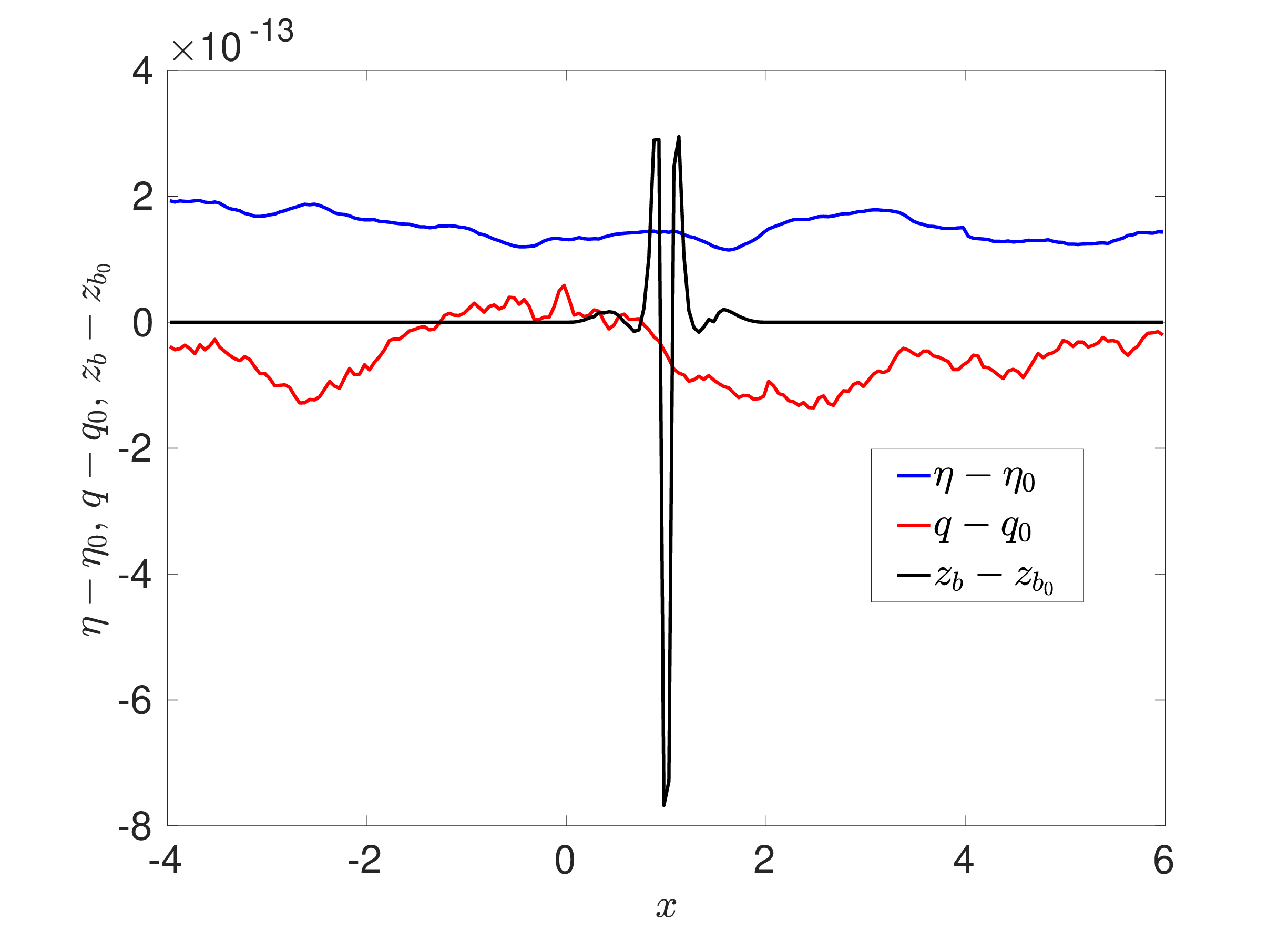}
	\caption{Test \ref{sssec:acc:Ex_flat}: Well-balanced for Saint-Venant-Exner model with flat bottom topography and sediment layer $z_b$ . Difference between numerical solution and stationary solution for $\eta$ (blue line), $z_b$ (black line) and $q$ (red line),  obtained with a third-order semi-implicit staggered scheme adopting a $200$ uniform mesh and CFL$=40$ at time $t=80\ s$.}
	\label{Exp:WB:Ex:Test_1_flat}
\end{figure}
% -----------------
%\begin{figure}[!ht]
%     \centering
%     \begin{subfigure}[b]{0.49\textwidth}
%         \centering
%         \includegraphics[width=0.9\textwidth]{Figures/Experiments/WB/EX/m_EX_WB_eta.eps} \includegraphics[width=0.9\textwidth]{Figures/Experiments/WB/EX/m_EX_WB_zb.eps}  
%         \caption{Different between numerical solution and stationary solution for $\eta$ (top) and $z_b$ (bottom).}
%         \label{Exp:WB:EX:flat}
%     \end{subfigure}
%     \hfill
%     \begin{subfigure}[b]{0.49\textwidth}
%         \centering
%         \includegraphics[width=0.9\textwidth]{Figures/Experiments/WB/EX/m_EX_WB_q.eps}
%         \\ \includegraphics[width=0.9\textwidth]{Figures/Experiments/WB/EX/m_EX_WB_h.eps}
%         \caption{Different between numerical solution and stationary solution for $q$ (top) and $h$ (bottom).}
%         \label{Exp:WB:EX:flat_1}
%     \end{subfigure}
%     \caption{Test \ref{sssec:acc:Ex_flat}: Well-balanced for Saint-Venant-Exner model with flat bottom topography and sediment layer $z_b$ . Difference between numerical solution and stationary solution for $\eta$ and $z_b$ \ref{Exp:WB:EX:flat} and $q$ and $h$ \ref{Exp:WB:EX:flat_1}, respectively (left) and (right),  obtained with a third-order semi-implicit staggered scheme adopting a $200$ uniform mesh and CFL$=40$ at time $t=80$.}
%     \label{Exp:WB:Ex:Test_1_flat}
%\end{figure}
% -------------
% 
\begin{equation}
    \label{WB_EX_1_eta_0} 
    W_0(x) = 
    \begin{bmatrix}
        0.7  \\
        0  \\
        0.1 + 0.2e^{-10(x-1)^2}
    \end{bmatrix}    
\end{equation}

Figure~\ref{Exp:WB:Ex:Test_1_flat} shows the discrepancy between the numerical solutions and the stationary solutions for $\eta$, $q$, $z_b$ at the final time $t = 80\ s$. These computations were performed on a uniform grid with 200 points and a CFL value of $40$. For all the variables, the semi-implicit scheme effectively preserves the stationary solution up to machine precision, also when the the bottom is computed as un unknown of the Saint-Venant-Exner model, instead to defined it as a given fixed bottom topography for the shallow-water system.

\subsection{Riemann problem}\label{ssec:Riemann}
This section focuses on evaluating the performance of the semi-implicit scheme under discontinuous initial conditions. Specifically, a Riemann problem that satisfies the Rankine-Hugoniot conditions is presented to assess the accurate positioning of the shock, and mass conservation 
and to measure conservation error introduced in the momentum (discharge) equation.

\subsubsection{Exact Riemann problem for shallow water}\label{sssec:SW:RP:exact}
Following \cite{Toro2009}, the exact Rankine-Hugoniot conditions satisfy
\begin{align}    \label{RH}
    -v & \jump{h} + \jump{q} =0 \\ 
    -v & \jump{q} + \Jump{ \frac{q^2}{h} + \ha g h^2 } = 0,
\end{align}
where $\jump{w} \equiv w_R - w_L$ and $v\in ]u_R + \sqrt{gh_R},u_L + \sqrt{gh_L}[.$  
% -----------------
\begin{figure}[!ht]
     \centering
         \centering
         \includegraphics[width=0.49\textwidth]{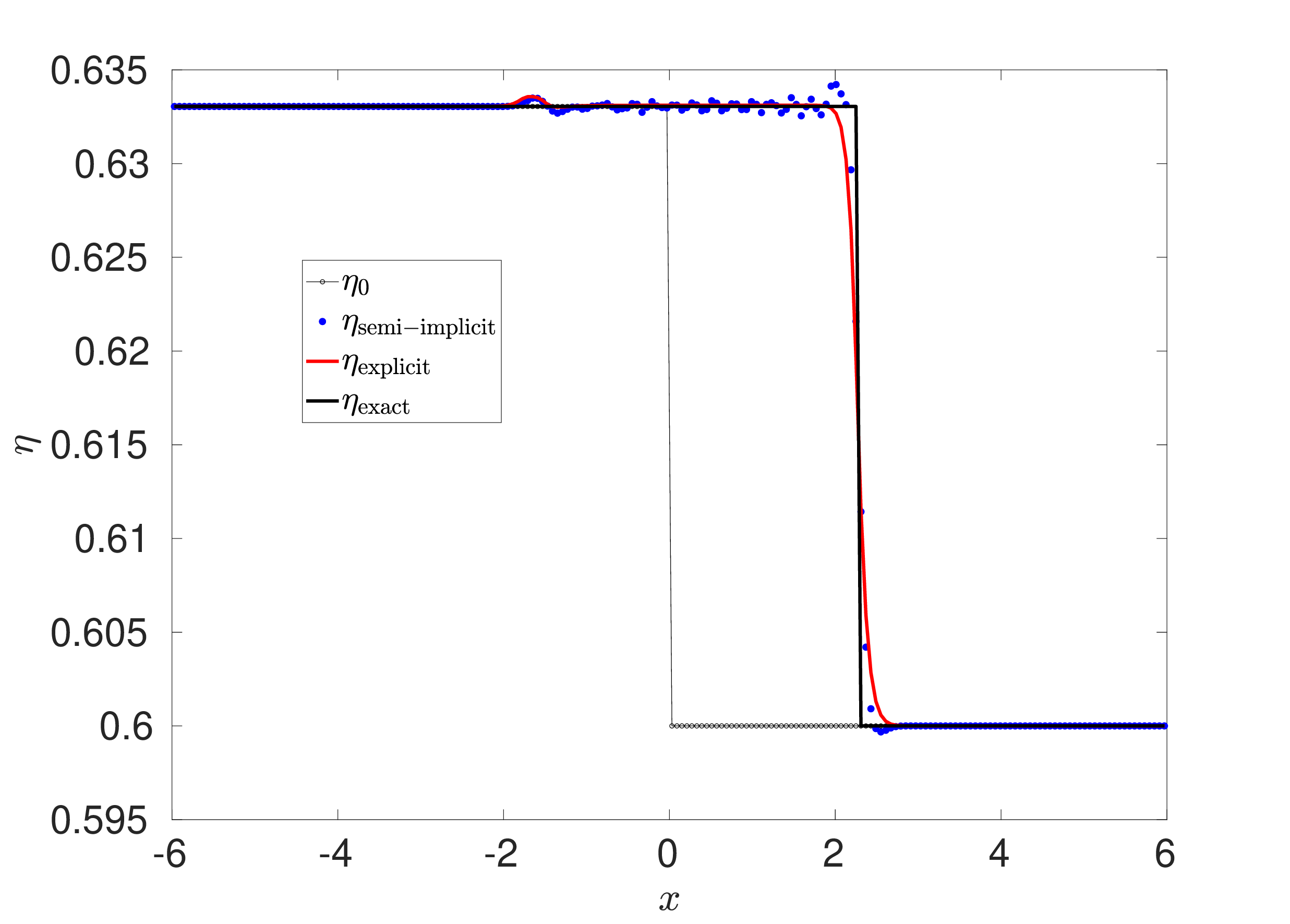} 
         \includegraphics[width=0.49\textwidth]{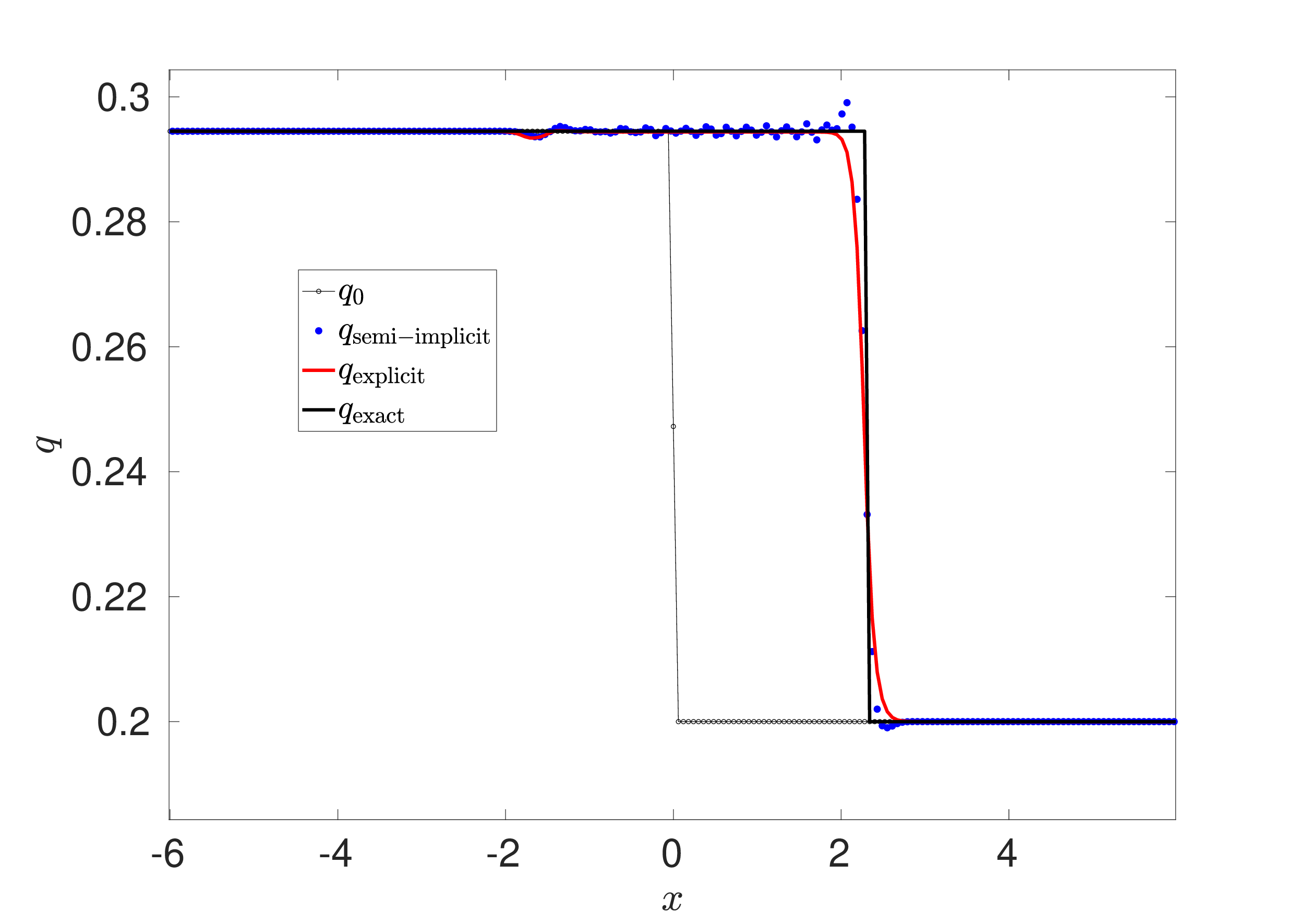}
     \caption{Test \ref{sssec:SW:RP:exact}: Exact Riemann problem for shallow water equations. Numerical solutions for $\eta$ and $q$, respectively,  obtained with a third-order semi-implicit staggered scheme, a first order explicit method and the analytical exact solution adopting a $200$ uniform mesh and CFL$=2.54$ at time $t=0.8\ s$. The explicit CFL is $0.4$ as the MCFL.}
     \label{Exp:SW:RP:exact}
\end{figure}
% ----------
%
% ----------
\begin{figure}[!ht]
     \centering
         \centering
         \includegraphics[width=0.4\textwidth]{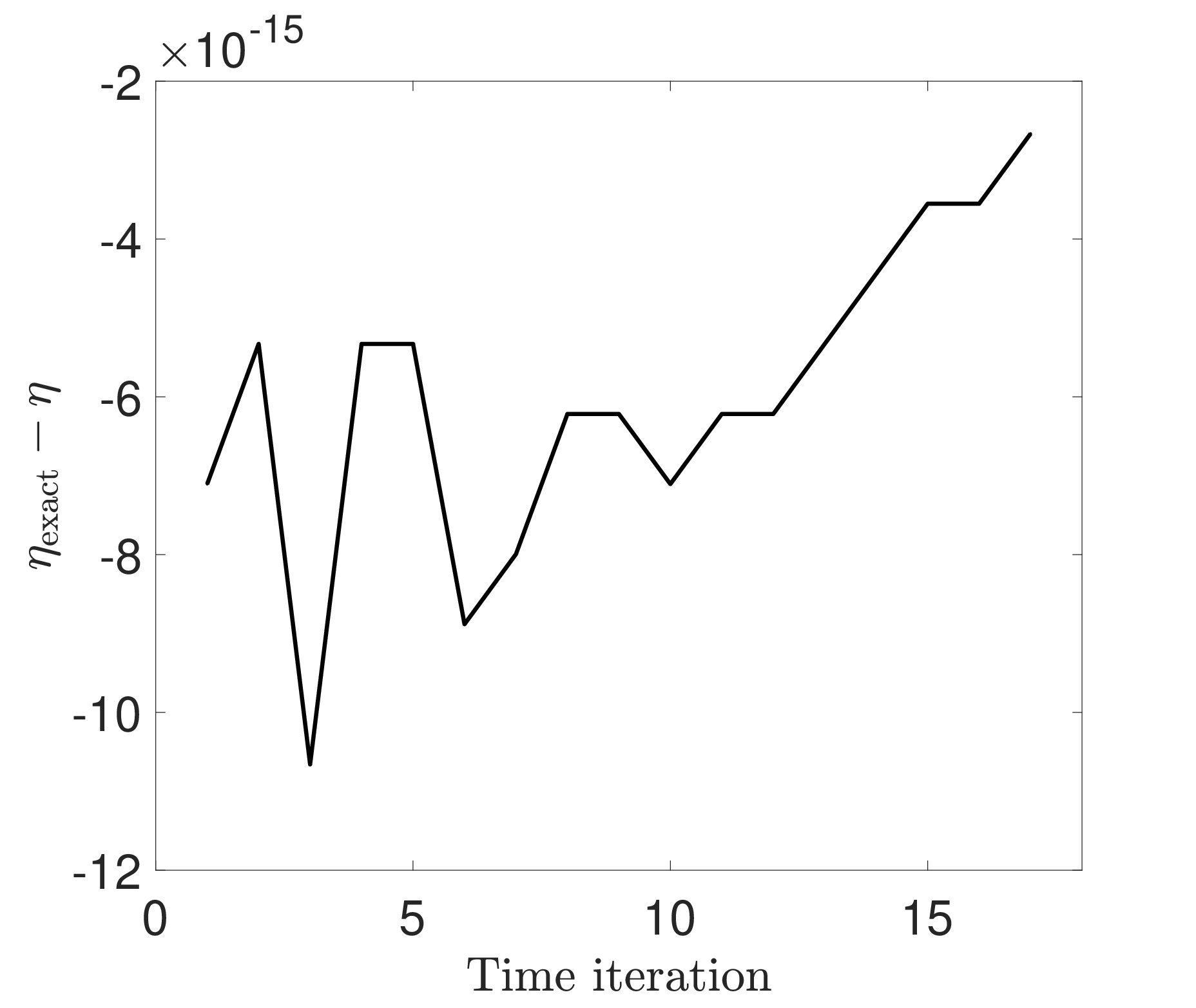} 
         \qquad 
         \includegraphics[width=0.4\textwidth]{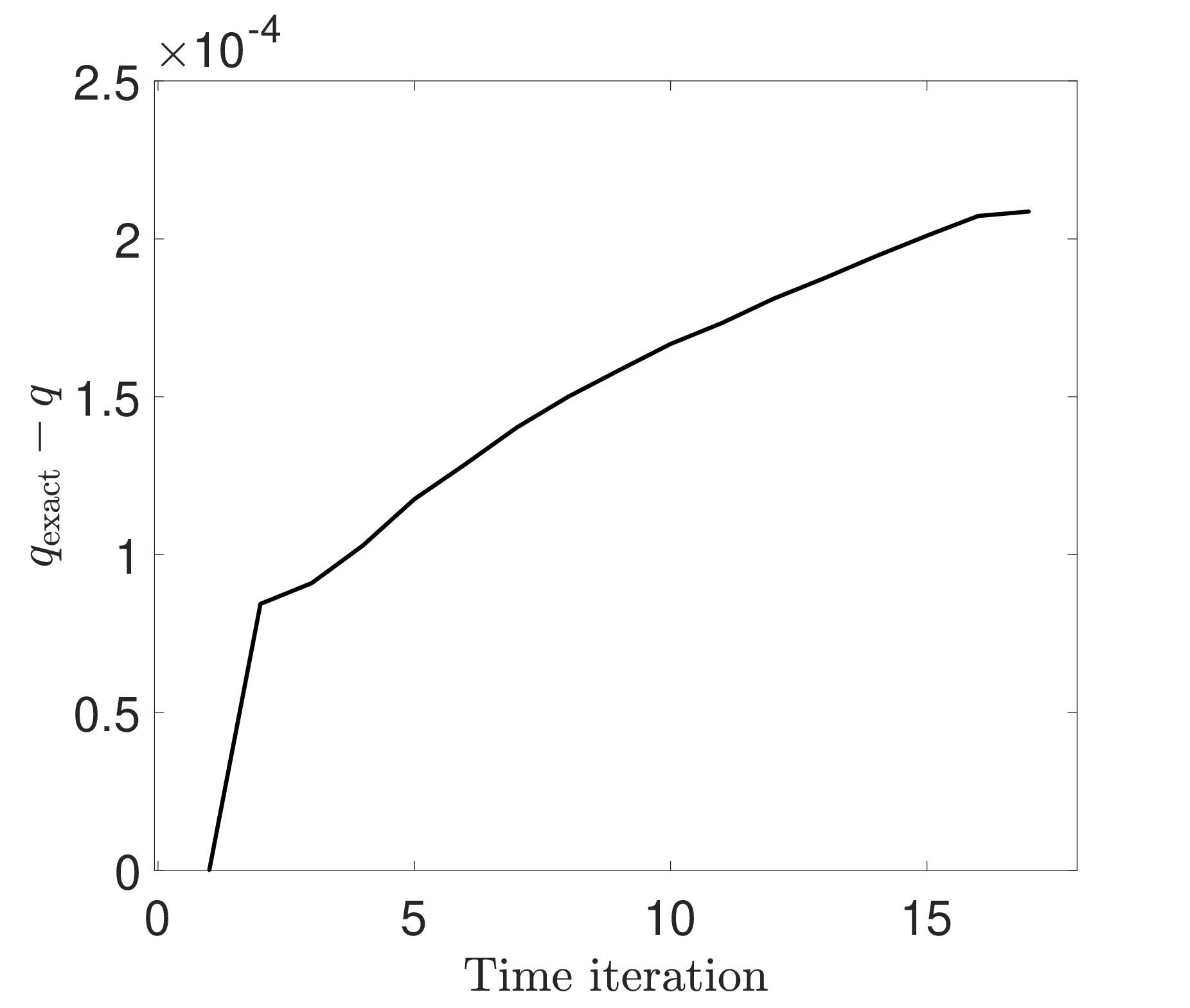}
     \caption{Test \ref{sssec:SW:RP:exact}: Exact Riemann problem for shallow water equations. Difference between integral of exact and numerical solutions for $\eta$ (left) and $q$ (right)  obtained with a third-order semi-implicit staggered scheme adopting a $200$ uniform mesh and CFL$=2.54$ at time $t=0.8\ s$.}
     \label{Exp:SW:RP:exact_con}
\end{figure}
% ----------

The common parameters for this experiment are as follows: the interval $[x_a, x_b] = [-4, 6]$, a CFL number of $2.54$, $h_R = 0.6$, $q_R = 0.2$, and $v = \lambda_R + 0.1$ where $\lambda_R = u_R + \sqrt{gh_R}$.

Figure~\ref{Exp:SW:RP:exact} illustrates the shock evolution for a Riemann problem that satisfies the Rankine-Hugoniot conditions \eqref{RH}. It is evident that the third-order semi-implicit scheme successfully captures the shock evolution, despite exhibiting some numerical oscillations near the discontinuity. The numerical solutions were computed on a uniform mesh with 200 points at a final time $t = 0.8\ s$, using a CFL number of $2.54$ for the semi-implicit scheme and $0.4$ for the explicit first-order method. Both schemes effectively preserve the shock evolution. Specifically, Figure~\ref{Exp:SW:RP:exact_con} shows that the semi-implicit scheme maintains the thickness up to machine precision, as expected. However, it introduces some error in the discharge due to the inherently non-conservative nature of the system.

This experiment reveals that, in the presence of shocks, the semi-implicit scheme can accurately capture the evolution of the shock. However, the approach used for the pressure term $h\partial_x\eta$ through high-order interpolation with the optimal CWENO polynomial is insufficient by itself to prevent and eliminate oscillations. Therefore, advanced semi-implicit shock-capturing techniques, such as \emph{Quinpi} scheme \cite{Quinpi}, should be employed. Nonetheless, this issue is beyond the scope of the present work and will be addressed in future studies.

\begin{comment}
\subsubsection{Non exact Riemann problem for shallow water}\label{sssec:SW:RP}
Following \cite{Toro2009} \ema{Fix it}, the Riemann problem...

\begin{figure}[!ht]
     \centering
     \begin{subfigure}[b]{0.49\textwidth}
         \centering
         \includegraphics[width=\textwidth]{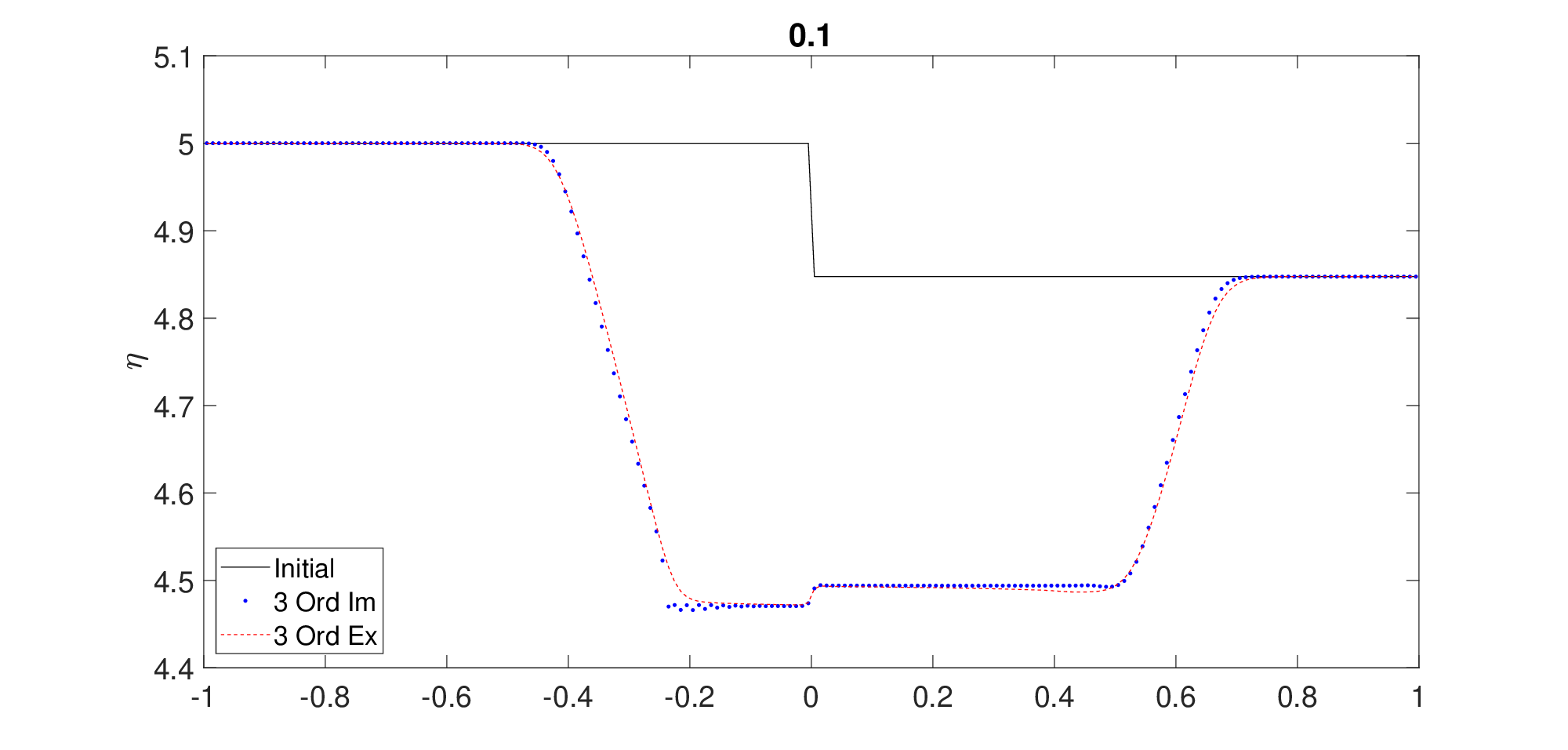} \includegraphics[width=\textwidth]{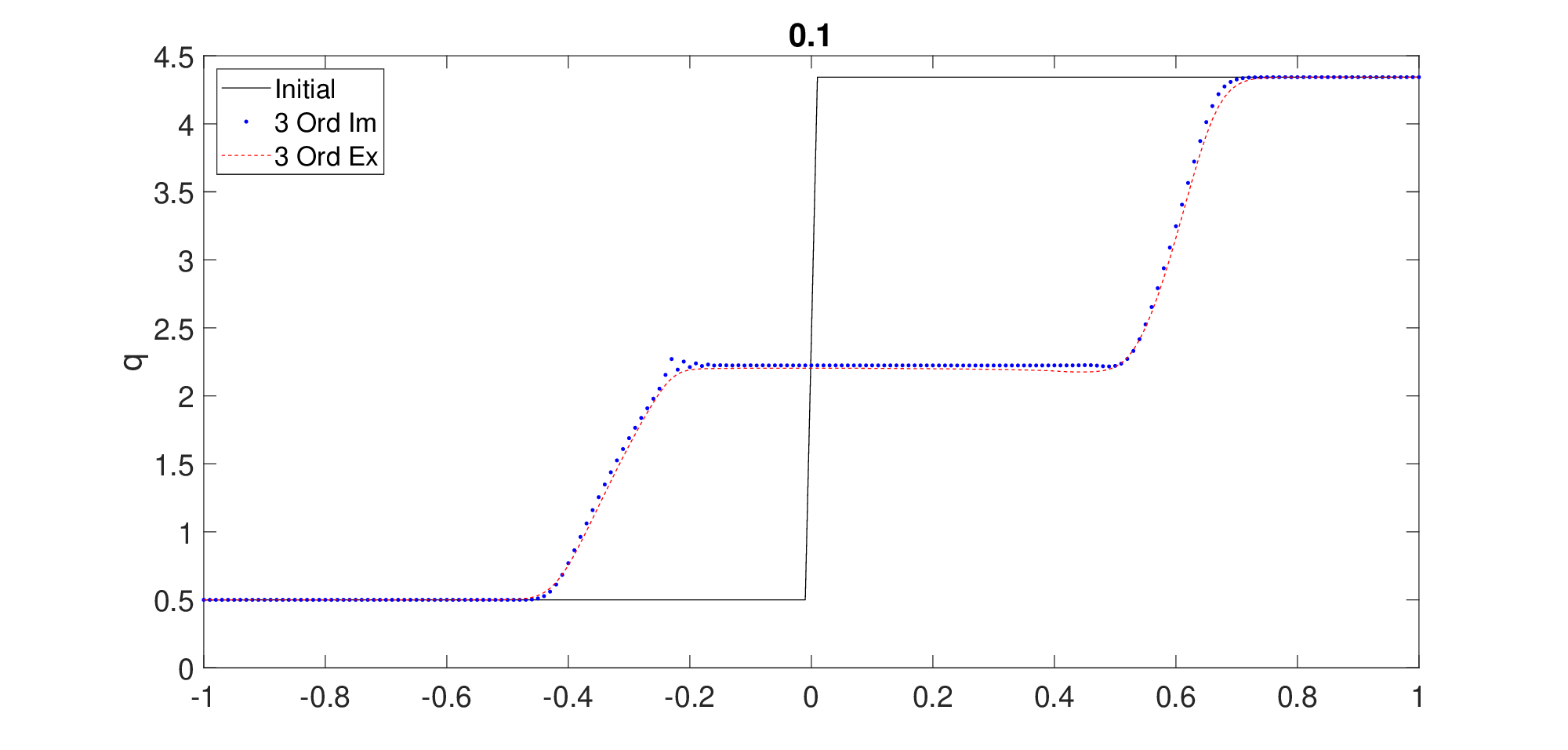}  
         \caption{Numerical solutions for $\eta$ (left) and $q$ (right).}
         \label{Exp:SW:RP:shock_sol}
     \end{subfigure}
     \hfill
     \begin{subfigure}[b]{0.49\textwidth}
         \centering
         \includegraphics[width=\textwidth]{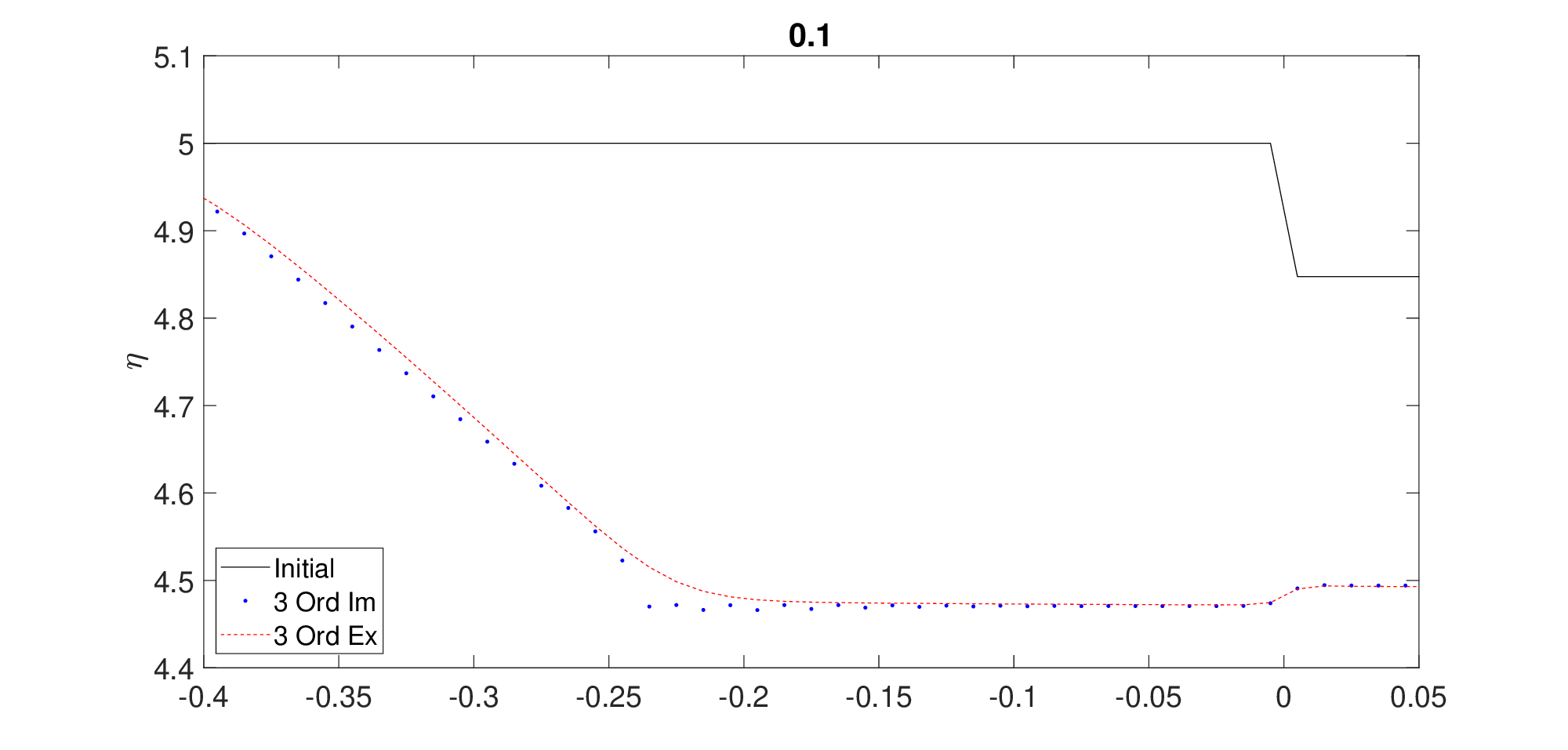}
         \\ \includegraphics[width=\textwidth]{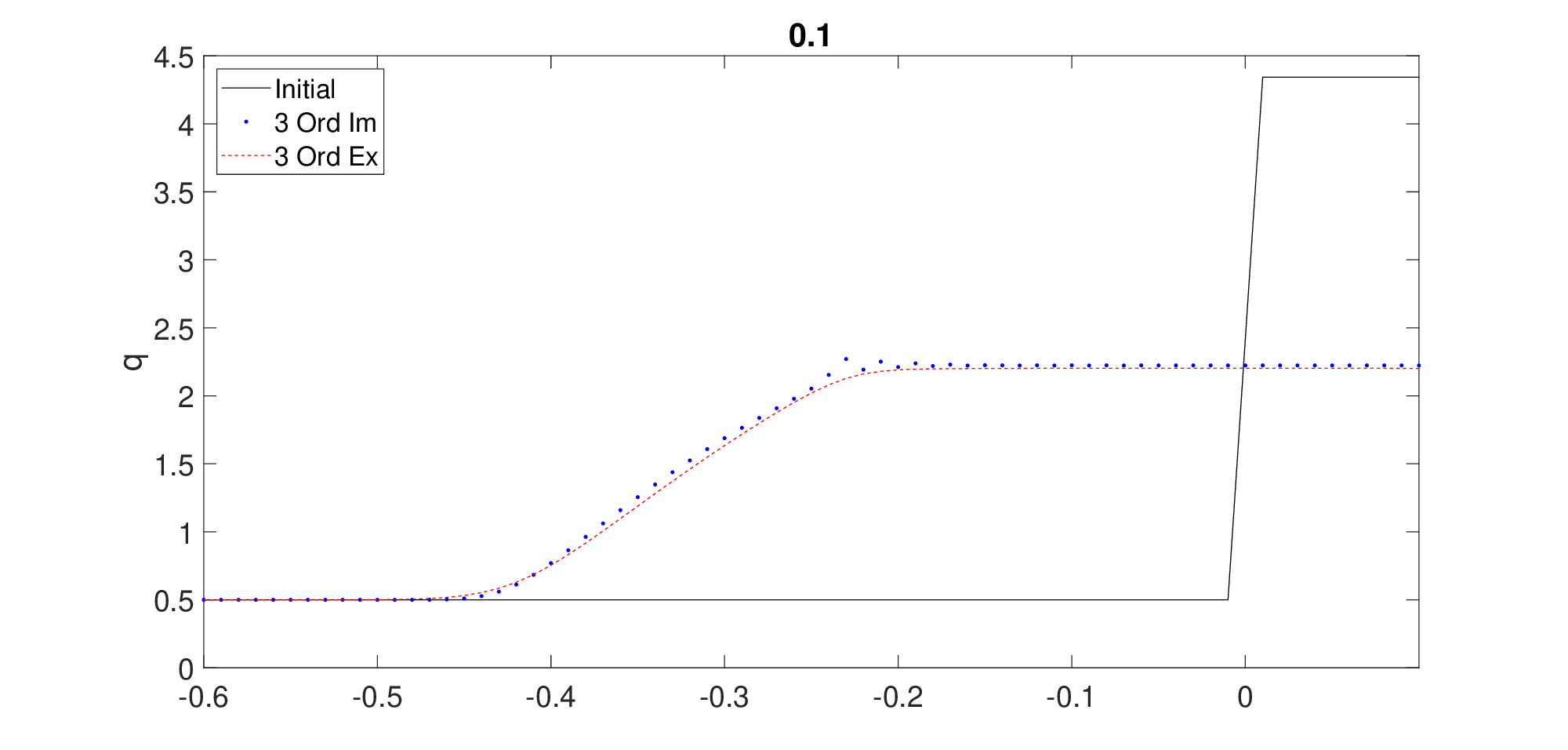}
         \caption{Zoom of the numerical solutions for $\eta$ (left) and $q$ (right).}
         \label{Exp:SW:RP:shock_zoom}
     \end{subfigure}
     \caption{Riemann problem for shallow water with discontinuous flat bottom topography \ref{sssec:SW:RP}. Numerical solutions and zoom for $\eta$ and $q$ \ref{Exp:SW:RP:shock_sol} and  \ref{Exp:SW:RP:shock_zoom}, respectively (left) and (right),  obtained with a third-order semi-implicit staggered scheme adopting a $200$ uniform mesh and CFL$=1.22$ at time $t=0.1$. The explicit solutions have been obtained on the same setting with CFL$_{\rm ex} = 0.4$.}
     \label{Exp:SW:RP:Test_shock_RP}
\end{figure}
\end{comment}

\subsection{Sediment evolution} \label{sssec:SA:long}
Following \cite{Macca2024}, in this section the long time sediment simulation is provided. In particular, the third-order semi-implicit solution, presented below, has been compared with the semi-analytical solution computed by the second order CAT2 \cite{Macca2024CATMOOD} applied to \eqref{svesemianalytical} and the quasi-stationary solution obtained by the second order approximation of quasi-stationary partial differential equation \eqref{Scal_eq}. With this purpose in mind, we have to consider different CFL conditions, and consequently different time steps, one for each scheme that we adopt. For this reason, we define CFL$_{\rm q-stat}$ the CFL condition used for the scalar equation scheme \eqref{Scal_eq}; CFL$_{\rm q-scal}$ the CFL condition adopted for the explicit scheme applied to scalar equation  \eqref{svesemianalytical}; and CFL$_{I\rm MEX}$ the CFL condition adopted for the semi-implicit scheme applied to full system  \eqref{First_ord_Exner}. In particular we set CFL$_{\rm q-scal}=0.9$ for the explicit scalar scheme \eqref{svesemianalytical}; CFL$_{\rm q-stat}=0.45$ for the second order explicit scheme applied to \eqref{Scal_eq}; and  for the semi-implicit methods a larger CFL condition could be used, however, since the term $qu$ \eqref{First order} is treated explicitly, the semi-implicit CFL condition could not be arbitrary larger and a material CFL condition must be satisfied. In our case CFL$_{\rm IMEX} = 9.2$ is adopted. These CFL conditions come out from the natural stability conditions of the different methods.
%
% -------------
\begin{figure}[!ht]
         \centering
         \includegraphics[width=0.32\textwidth]{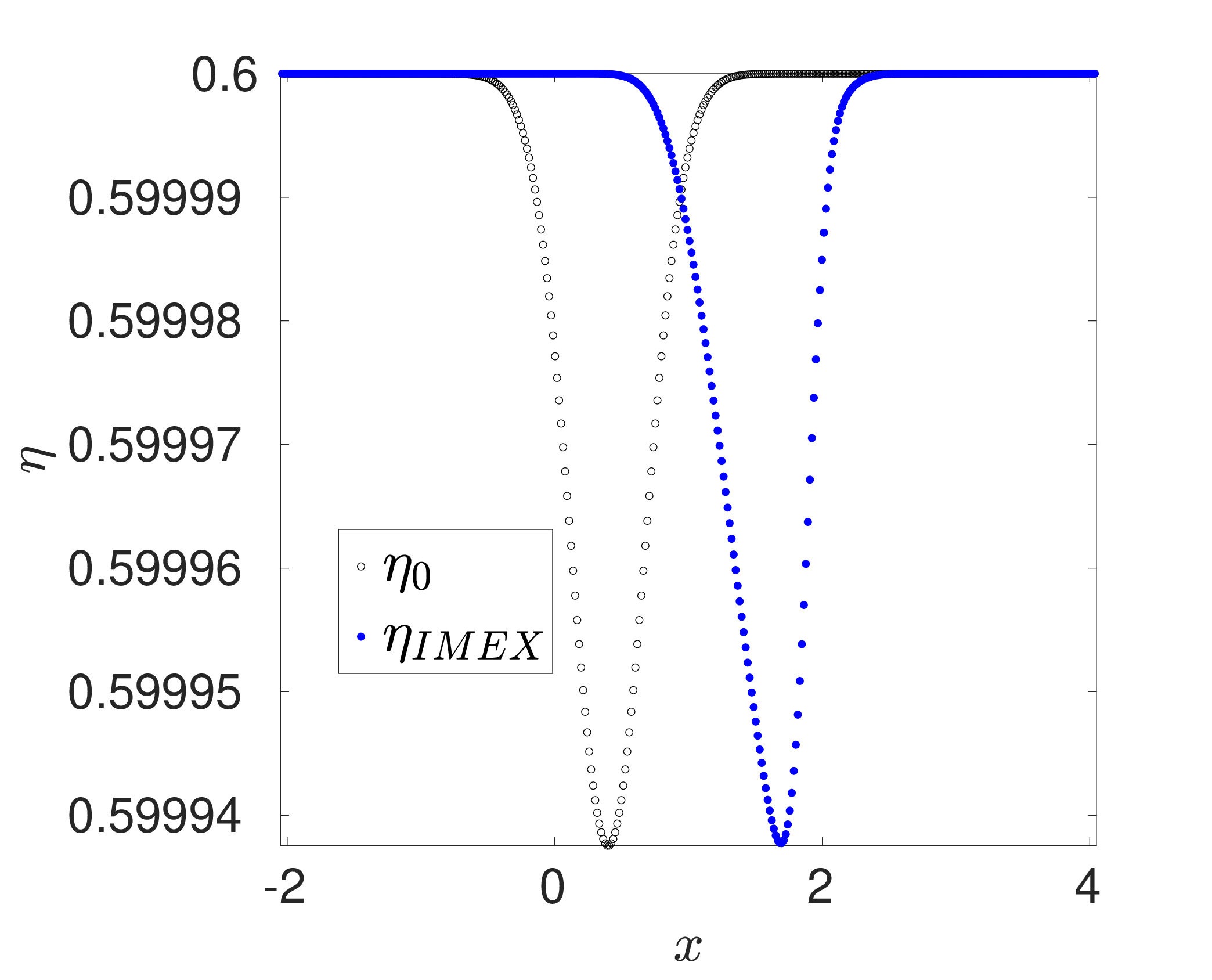}
         \includegraphics[width=0.32\textwidth]{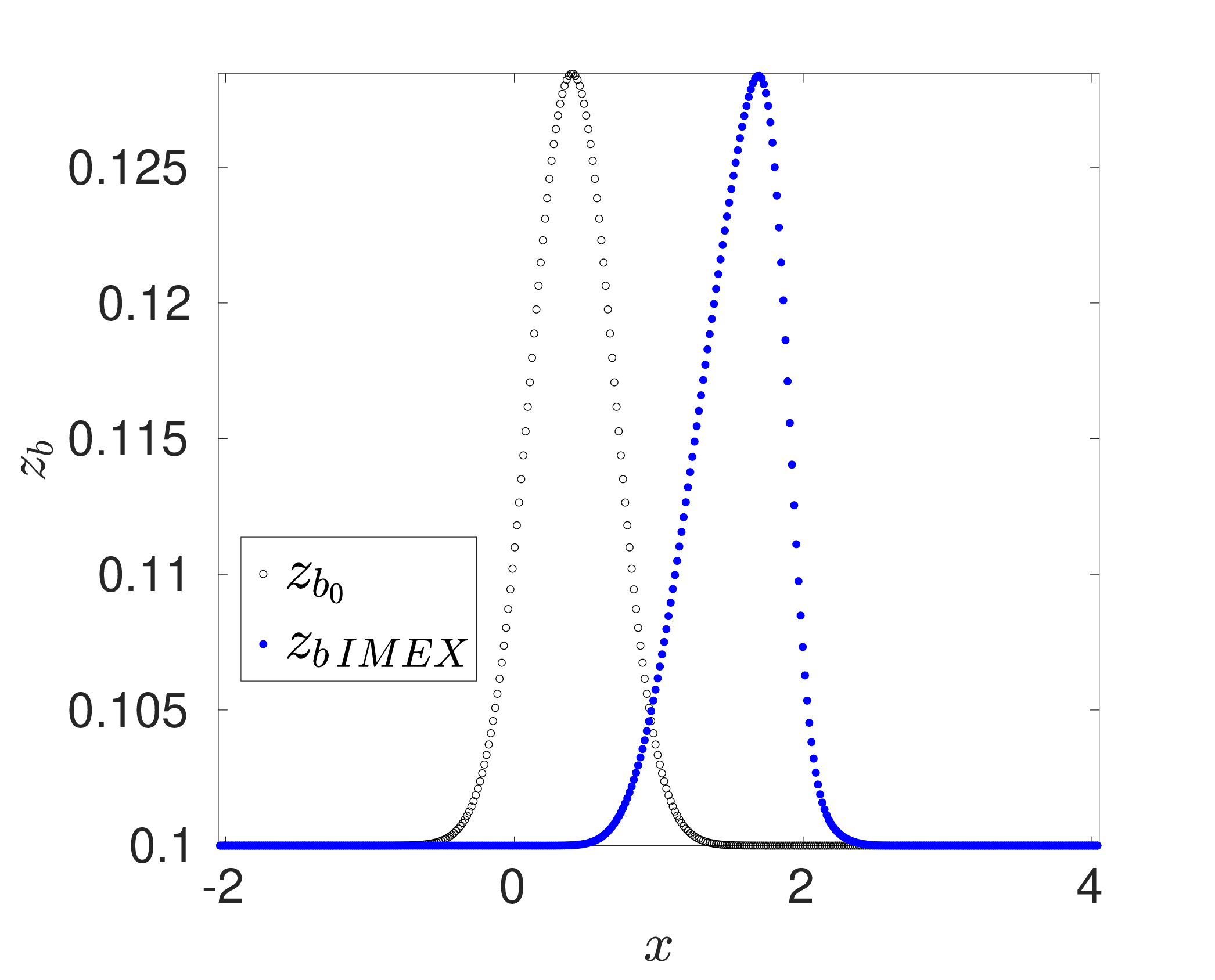}
         \includegraphics[width=0.32\textwidth]{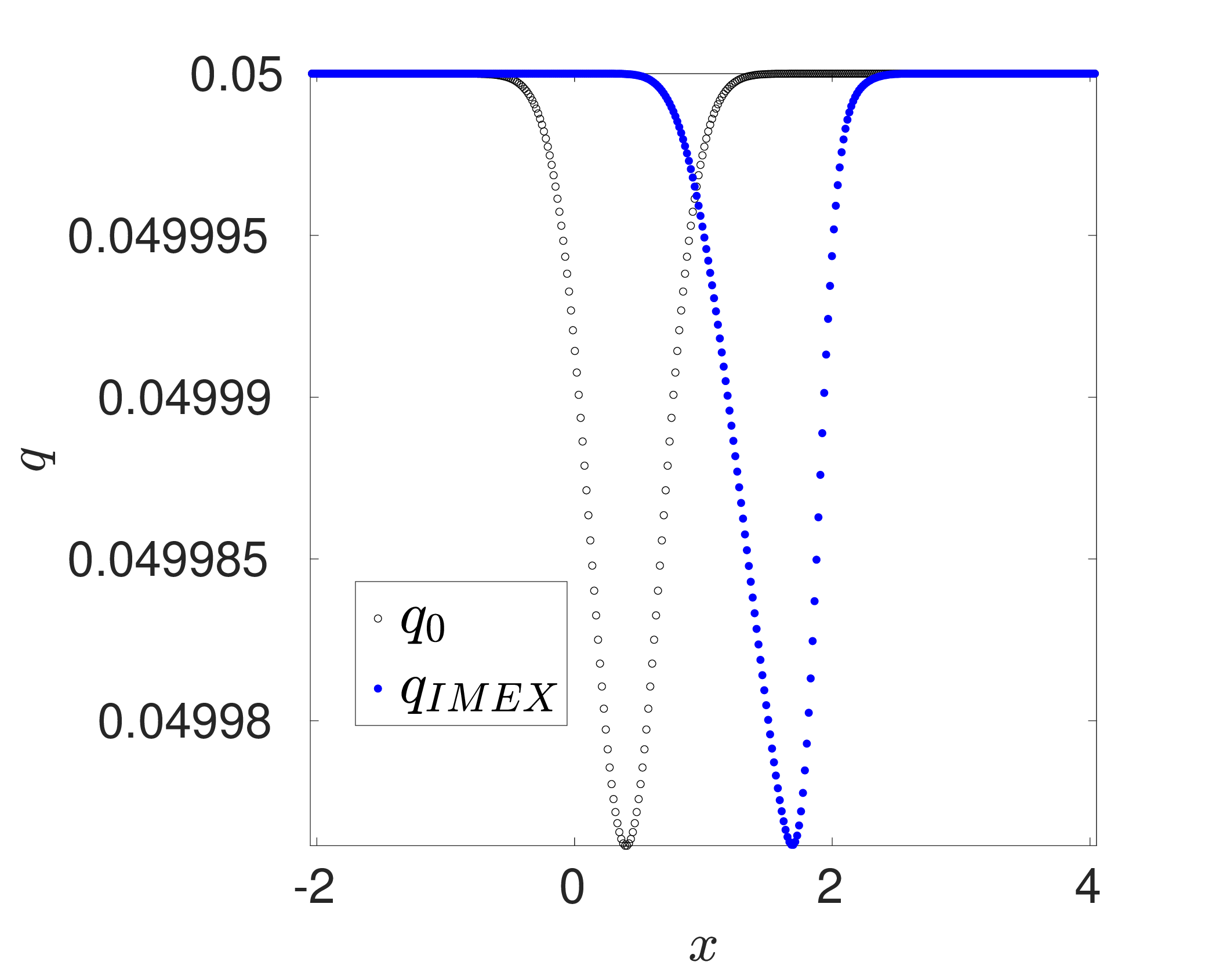}         
     \caption{Test  \ref{sssec:SA:long}: Sediment evolution for Exner model with flat bottom topography and non-flat sediment layer. Numerical solutions for $\eta$ (left), $z_b$ (center) and $q$ (right),  obtained with a third-order semi-implicit staggered scheme adopting a $400$ uniform mesh and CFL$_{\rm IMEX}=9.2$ at time $t=1400\ s$.}
     \label{Exp:SA:test_long_eta_q}
\end{figure}
% -----------
\begin{figure}[!ht]\centering
     \includegraphics[width=0.8\textwidth]{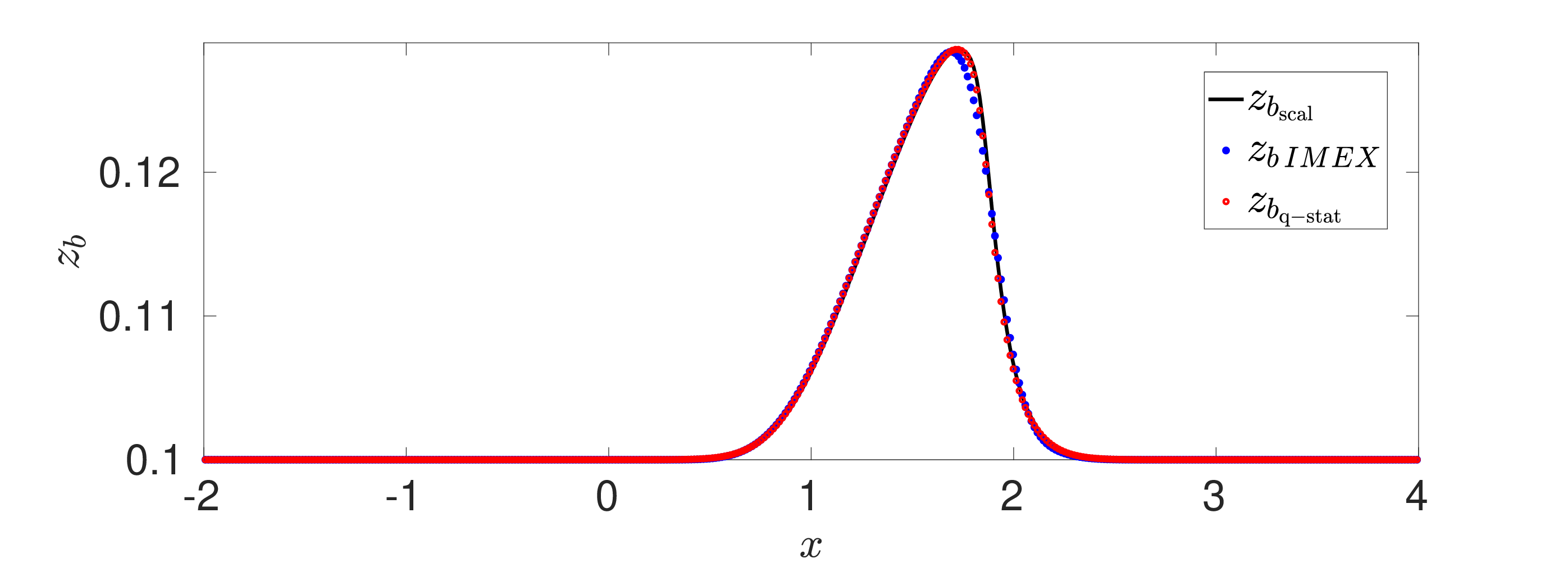} %\\ 
     \caption{Test \ref{sssec:SA:long}: Sediment evolution for Exner model with flat bottom topography and non-flat sediment layer, and initial condition \eqref{u0}. Comparison between $z_b$ computed with the third-order semi-implicit scheme, CAT2 applied to \eqref{Scal_eq} and second order method applied to \eqref{svesemianalytical}  adopting a $400$ uniform mesh and CFL$_{\rm IMEX}=8.7$, CFL$_{\rm q-stat}=0.45$ and CFL$_{\rm q-scal}=0.9$ at time $t=1400\ s$.}
     \label{Exp:SA:z_b}
\end{figure}
% -------------
\begin{figure}[!ht]\centering
     %\includegraphics[width=0.8\textwidth]{Figures/Experiments/Semi_Analyt/Scalar/m_Exn_zb_long_comparison_2.eps}
     %\\ 
     \includegraphics[width = 0.8\textwidth]{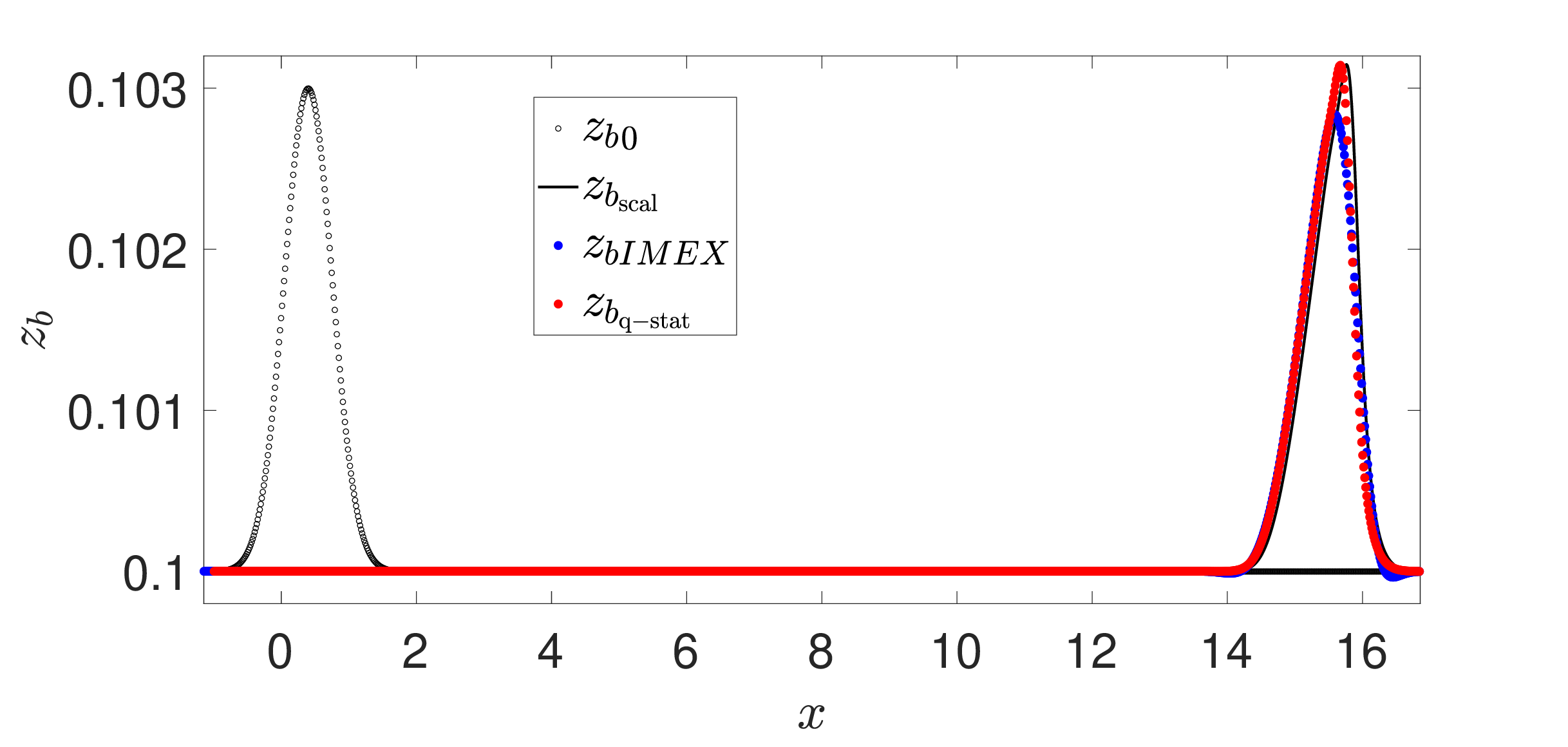}
     \caption{Test \ref{sssec:SA:long}: Sediment evolution for Exner model with flat bottom topography and non-flat sediment layer, and initial condition \eqref{u0_1}. Comparison between $z_b$ computed with the third-order semi-implicit scheme, CAT2 applied to \eqref{Scal_eq} and second order method applied to \eqref{svesemianalytical}  adopting a $1200$ uniform mesh and CFL$_{\rm IMEX}=9.2$, CFL$_{q-stat}=0.45$ and CFL$_{q-scal}=0.9$ at time $t=20000\ s$.}
     \label{Exp:SA:z_b_2}
\end{figure}
% -----------
\begin{figure}[!ht]
         \centering
         \includegraphics[width=0.49\textwidth]{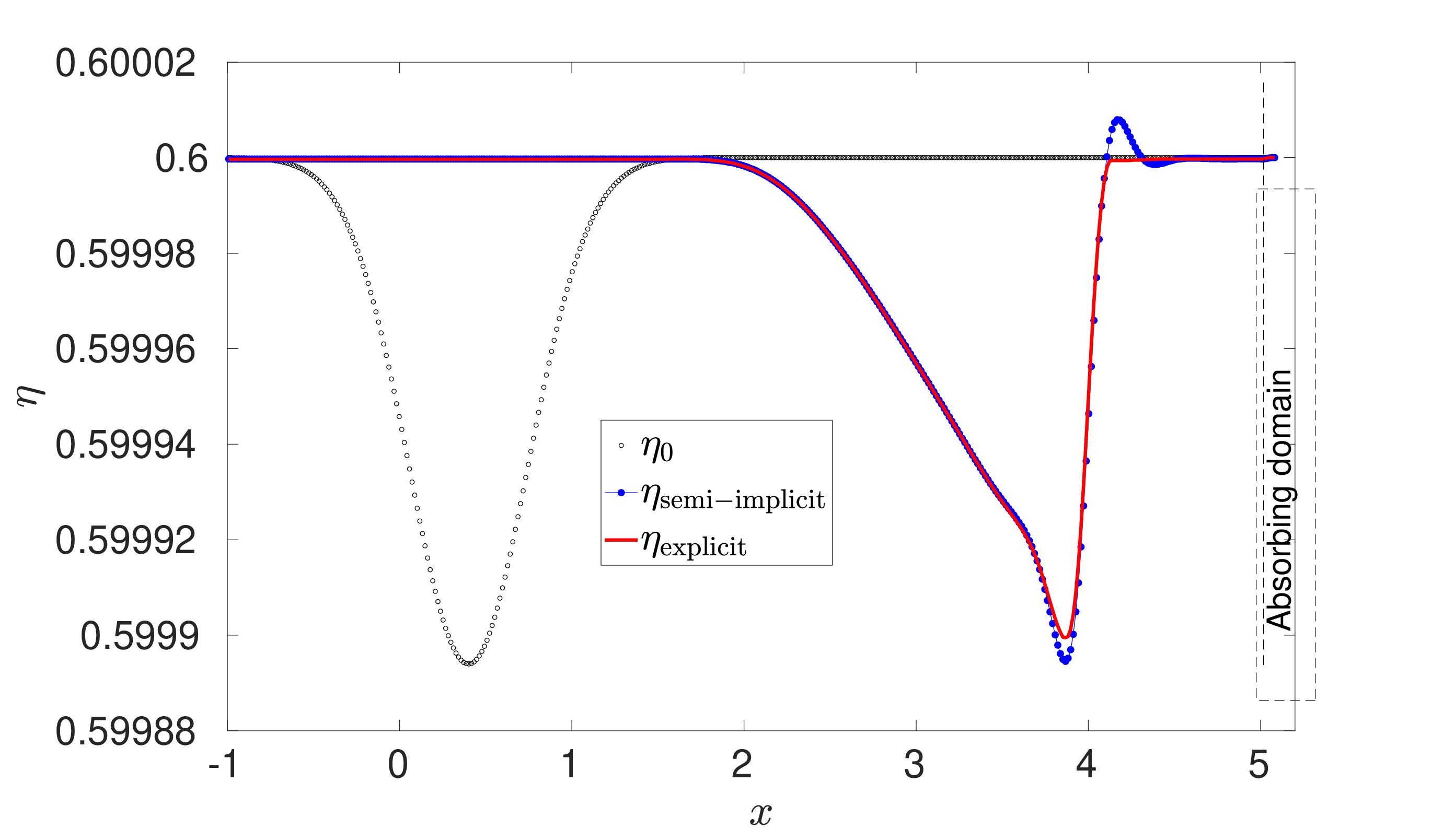}
         \includegraphics[width=0.49\textwidth]{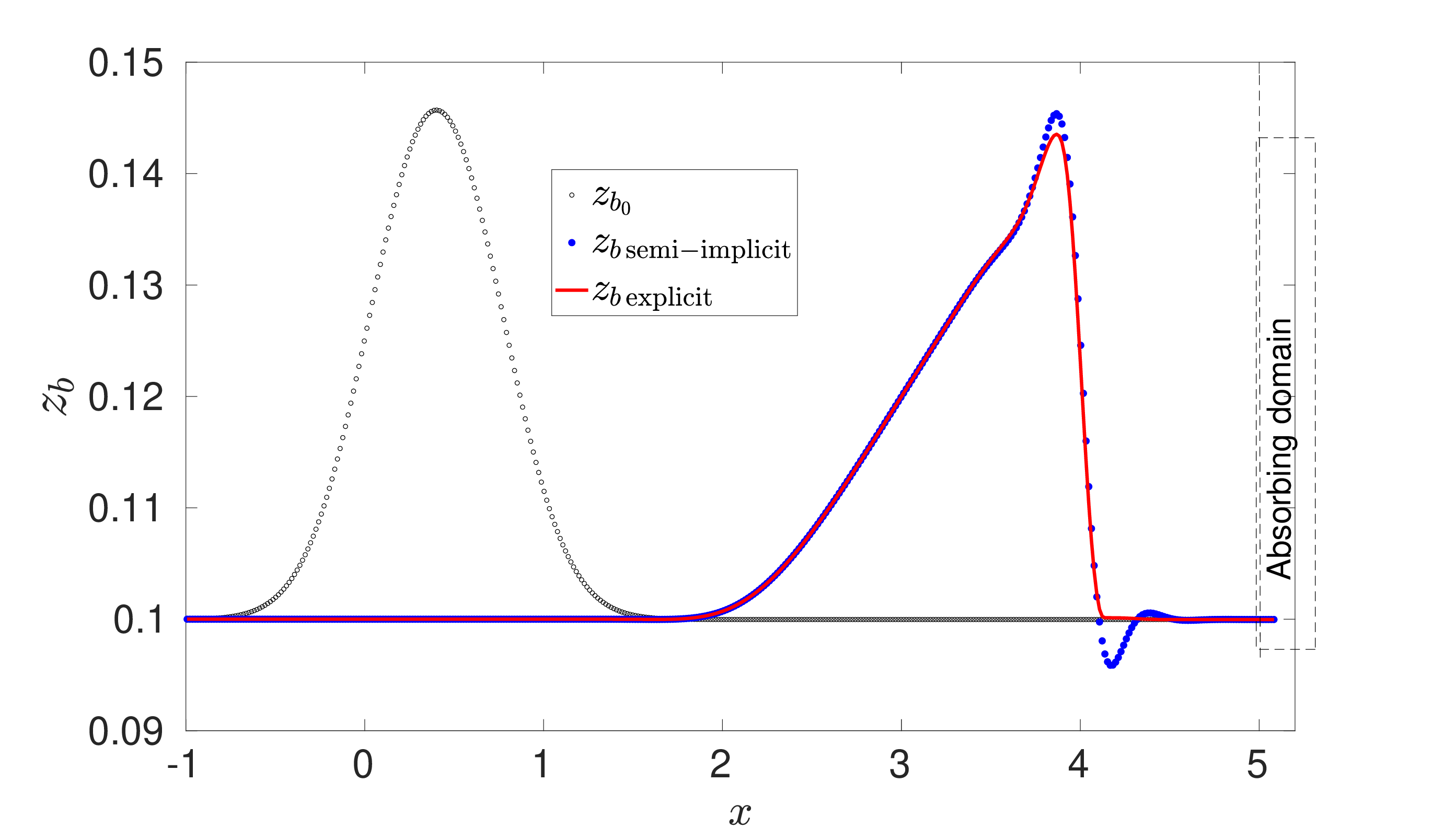}    
         \caption{Test  \ref{sssec:SA:long}: Sediment evolution for Exner model with flat bottom topography and non-flat sediment layer. Numerical solutions for $\eta$ (left), $z_b$ (right),  obtained with a third-order semi-implicit staggered scheme adopting a $400$ uniform mesh and CFL$_{\rm IMEX}=9.49$ at time $t=3500\ s$.
         }
     \label{Exp:SA:test_long_shock}
\end{figure}
% --------------
%

The common settings of this experiment are: $[x_a,x_b]=[-1,4]$ the interval; $A_g = 0.1;$ $\xi = \frac{1}{1-\rho_0},$ with $\rho_0=0.2;$ $m_g=3;$ $t_{end}= 1400\ s;$ and, initial conditions are so set: $b(x) \equiv 0,$ $h_0(x) = 0.5,$    
\begin{equation}
    \label{u0} 
    u_0(x) = 0.1 + 6\times10^{-3}e^{-\frac{(x-0.4)^2}{0.5^2}}
\end{equation}
and $z_b(x_a) = 0.1.$

Given $u_0(x)$, all variables can be derived as in Section~\ref{App_A_2}. In particular, the constant $C$ is computed as $C = G(u_0(a)) + g(h_0(a) + z_b(a) + b(a))$ where $G$ is a solution of \eqref{G'} and $Q$ is obtained through $Q = q_0(a) + q_b(a).$ Then, free boundary conditions are imposed at boundaries.

Figure~\ref{Exp:SA:test_long_eta_q} shows the numerical results for $\eta$ and $q$ obtained with the third-order semi-implicit staggered scheme at final time $t = 1400\ s$ on a uniform mesh with 400 points and CFL$_{\rm IMEX}=9.2.$ Figure~\ref{Exp:SA:z_b} exhibits the sediment evolution obtained with the semi-implicit scheme, the second order CAT2 method applied to reduced equation \eqref{svesemianalytical} and second order scheme applied to quasi-simple wave equation \eqref{Scal_eq} with different CFL conditions CFL$_{\rm IMEX}=9.2$, CFL$_{\rm q-stat}=0.45$ and CFL$_{\rm q-scal}=0.9$ at time $t=1400\ s$. It particular, the semi-analytical numerical solutions of the approximate partial differential equations are in agreement with the solution of the full Exner system \eqref{Ex_sis}. 

Nonetheless, it is worth mentioning that similar results can be obtained when considering long-time simulations, specifically with $t_{\rm end} = 20000\ s$ seconds on the interval $[x_a,x_b]=[-1,17]$, under similarly settings except for $ u_0(x)$ that in this case is given by 
\begin{equation}
    \label{u0_1} 
    u_0(x) = 0.1 + 6\times10^{-4}e^{-\frac{(x-0.4)^2}{0.5^2}}.
\end{equation}
These results are illustrated in Figure~\ref{Exp:SA:z_b_2}.

Finally, given that the semi-implicit scheme is not inherently shock-capturing, it is important to highlight its oscillatory behavior near discontinuities, particularly after the dune motion generates a significant jump. This phenomenon is evident when considering a similar test in the interval $[x_a, x_b] = [-1, 5]$, with $t_{\rm end} = 3500\ s$, CFL$_{\rm IMEX} = 9.49$ and CFL$_{\textrm{EXP}} = 0.45$ \cite{Loubere2024}.
%{\color{red}the following settings: the interval $[x_a, x_b] = [-1, 5]$; $A_g = 0.1$; $\xi = \frac{1}{1-\rho_0}$ with $\rho_0 = 0.2$; $m_g = 3$; and $t_{\rm end} = 3500\ s$ with CFL$_{\rm IMEX} = 9.49$ and CFL$_{\textrm{EXP}} = 0.45$ \cite{Loubere2024}.} 
The initial conditions are set as $b(x) \equiv 0$, $h_0(x) = 0.5$,  
\begin{equation}
    \label{u0_2} 
    u_0(x) = 0.1 + 10^{-2}e^{-\frac{(x-0.4)^2}{0.5^2}},
\end{equation}
and $z_b(x_a) = 0.1$. The obtained results for the free surface and sediment layer are in Figure~\ref{Exp:SA:test_long_shock}, where we see small oscillations near the shock.
The oscillation decreases when using smaller CFL numbers. As already mentioned, this problem would be probably overcome by adopting the implicit approach in \cite{Quinpi}.

\subsection{Semi-Analytical ODE-type}\label{sssec:SA:Ode}
In this section, the numerical solution computed with the third-order semi-implicit scheme has been compared with the semi-analytical solution ODE-type introduced in Subsection~\ref{subsec:semi-analitical-sol-SVE}. In particular, fixing $A_g = 0.001,$ $\xi = 1,$ and initial condition 
\begin{equation}
    \label{semi_analy_IC}
    W_0(x) = \begin{bmatrix}
        h_0 \\ h_0u_0(x) \\ z_{b_0}(x)
    \end{bmatrix} 
\end{equation}
in which $h_0\equiv 10;$ $u_0(x) = 0.01(x+1)$ and $z_{b_0} = 0.03A_gu_0(x)^2.$ The computational domain $[x_a,x_b] = [0,400]$ while the absorbing domain $[x_b,x_c] = [400,450]$ has been adopted to avoid reflective phenomena to the right boundary condition. 
%\begin{figure}[!ht]
%     \centering
%         \includegraphics[width=0.49\textwidth]{Figures/Experiments/Semi_Analyt/ODE/m_Test_1_h.eps}
%         \includegraphics[width=0.49\textwidth]{Figures/Experiments/Semi_Analyt/ODE/m_Test_1_u.eps}
%     \caption{Test \ref{sssec:SA:Ode}: Semi-Analytical ODE-type. Exact ODE-type solution and numerical one for $h$ (left) and $u$ (right  obtained with a third-order semi-implicit staggered scheme  adopting a $200$ uniform mesh and CFL$_{\rm IMEX}=1.87$ at time $t=2000$ $s$.}
%     \label{Exp:SA:test_ode_h_u}
%\end{figure}
\begin{figure}[!ht]
	\centering
	\includegraphics[width=0.45\textwidth]{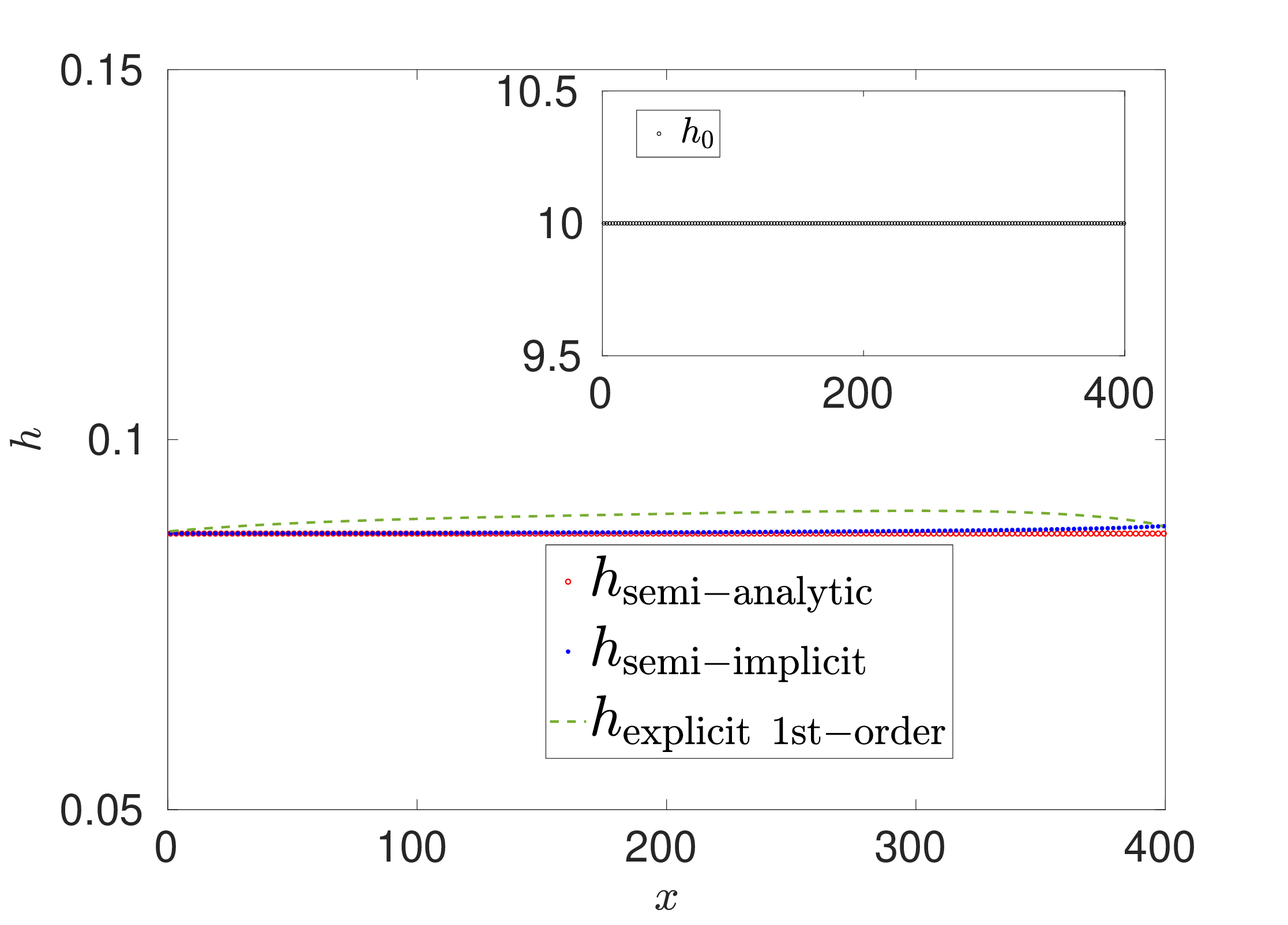}
	\qquad 
	\includegraphics[width=0.45\textwidth]{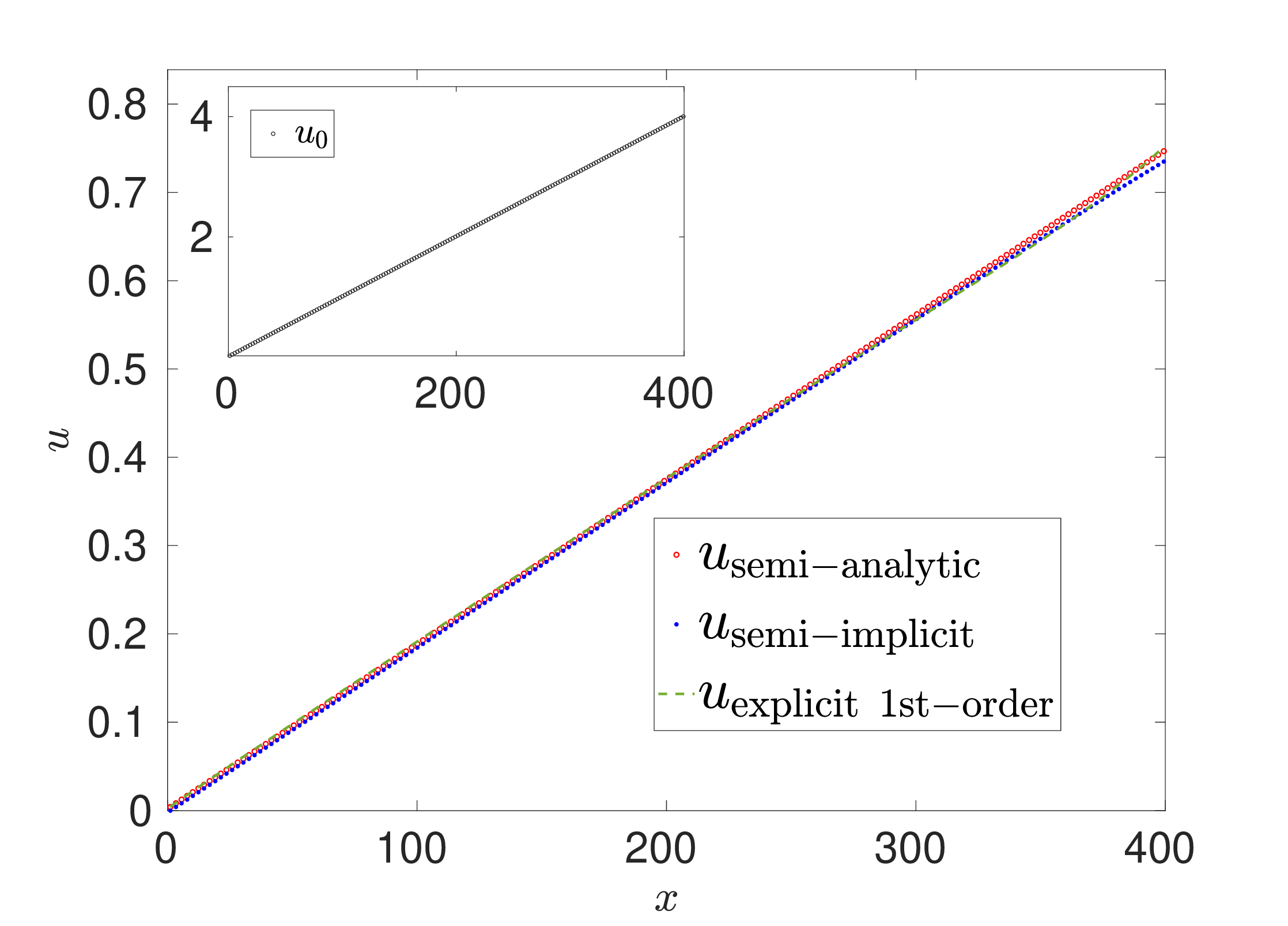}
	\caption{Test \ref{sssec:SA:Ode}: Semi-Analytical ODE-type. Exact ODE-type solution and numerical one for $h$ (left) and $u$ (right)  obtained with a third-order semi-implicit staggered scheme adopting a $200$ uniform mesh and CFL$_{\rm IMEX}=1.87$ at time $t=2000\ s$.}
	\label{Exp:SA:test_ode_h_u}
\end{figure}
\begin{figure}[!ht]\centering
     \includegraphics[width=0.8\textwidth]{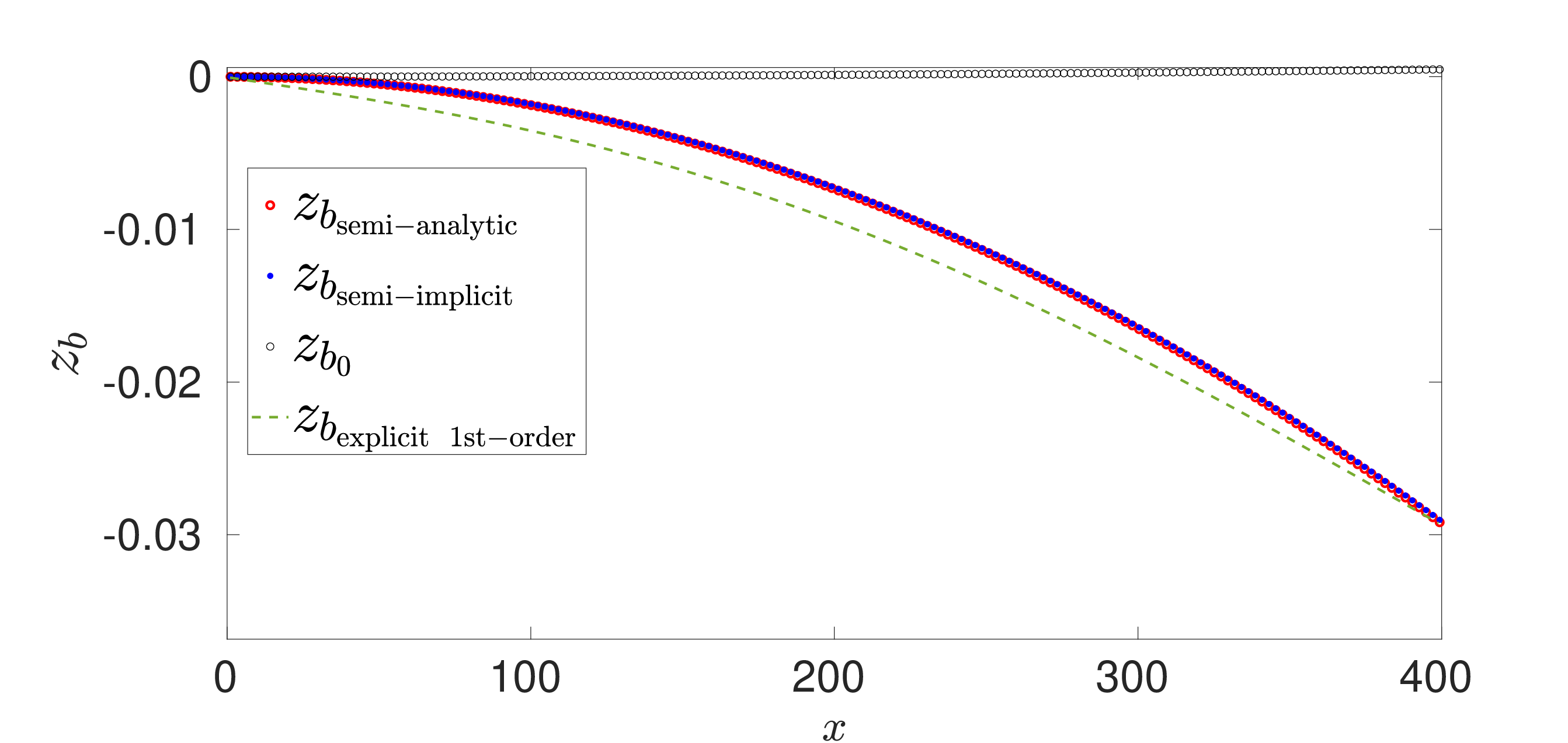}
     \caption{Test \ref{sssec:SA:Ode}. Semi-Analytical ODE-type. Exact ODE-type solution and numerical one for $z_b$  obtained with a third-order semi-implicit staggered scheme  adopting a $200$ uniform mesh and CFL$_{\rm IMEX}=1.87$ at time $t=2000\ s$.}
     \label{Exp:SA:ODE_z_b}
\end{figure}
% -------------
% 

Figures~\ref{Exp:SA:test_ode_h_u}-\ref{Exp:SA:ODE_z_b} show the initial conditions, the numerical solutions of the semi-analytic ODE-type system and the solution of the third-order semi-implicit scheme for $h,$ $u$ and $z_b$, which are computed on the domain $[x_a,x_b]=[0,400]$ with 200 uniform cells at final time $t = 2000\ s$. In this case, we use CFL$=1.87$ such that the MCFL$=0.4$. We can observe how the numerical solutions give a good approximation of the semi-analytical solution, where only some small differences can be appreciated near the end of the domain after $2000\ s$ of simulation. In this case the reference solution has been computed with Matlab \emph{ode45} solver. Let us remark that this is a hard test since it is a long domain and a very slow process, so we need a very long time of simulation. Actually, the erosion process at the end of the domain is 3 $cm$ approximately while the longitudinal domain is 400 $m$. In this figures, we also show the results obtained using an first-order (time-space) explicit method with the Rusanov flux also with 200 cells. We see how the third-order method notably improves the results for this test.

\subsection{Exner waves}\label{sssec:Ex:Waves}
Following a quantitative and qualitative analysis of the third-order semi-implicit numerical scheme, it remains to assess its ability to accurately capture sediment evolution, despite introducing errors by failing to resolve surface waves, which require sufficiently refined spatial and temporal discretization to be well-captured. In this regard, building on the approach of \cite{MaccaRussoBumi}, surface waves are generated to evaluate the scheme’s accuracy. The procedure followed is as outlined below:
%-------------
\begin{figure}[!ht]
     \centering
     \centering
     \includegraphics[width=0.49\textwidth]{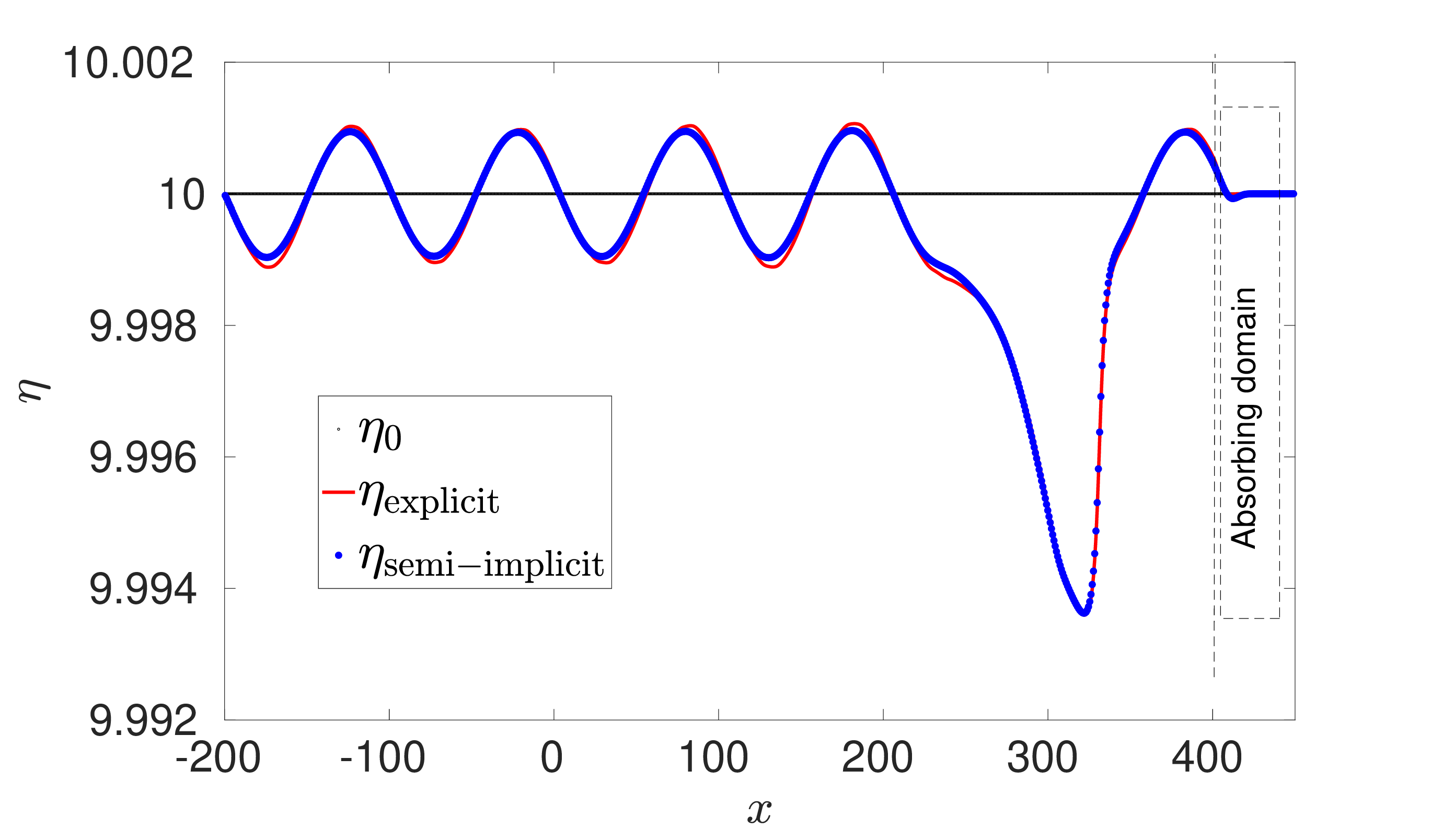}
      \includegraphics[width=0.49\textwidth]{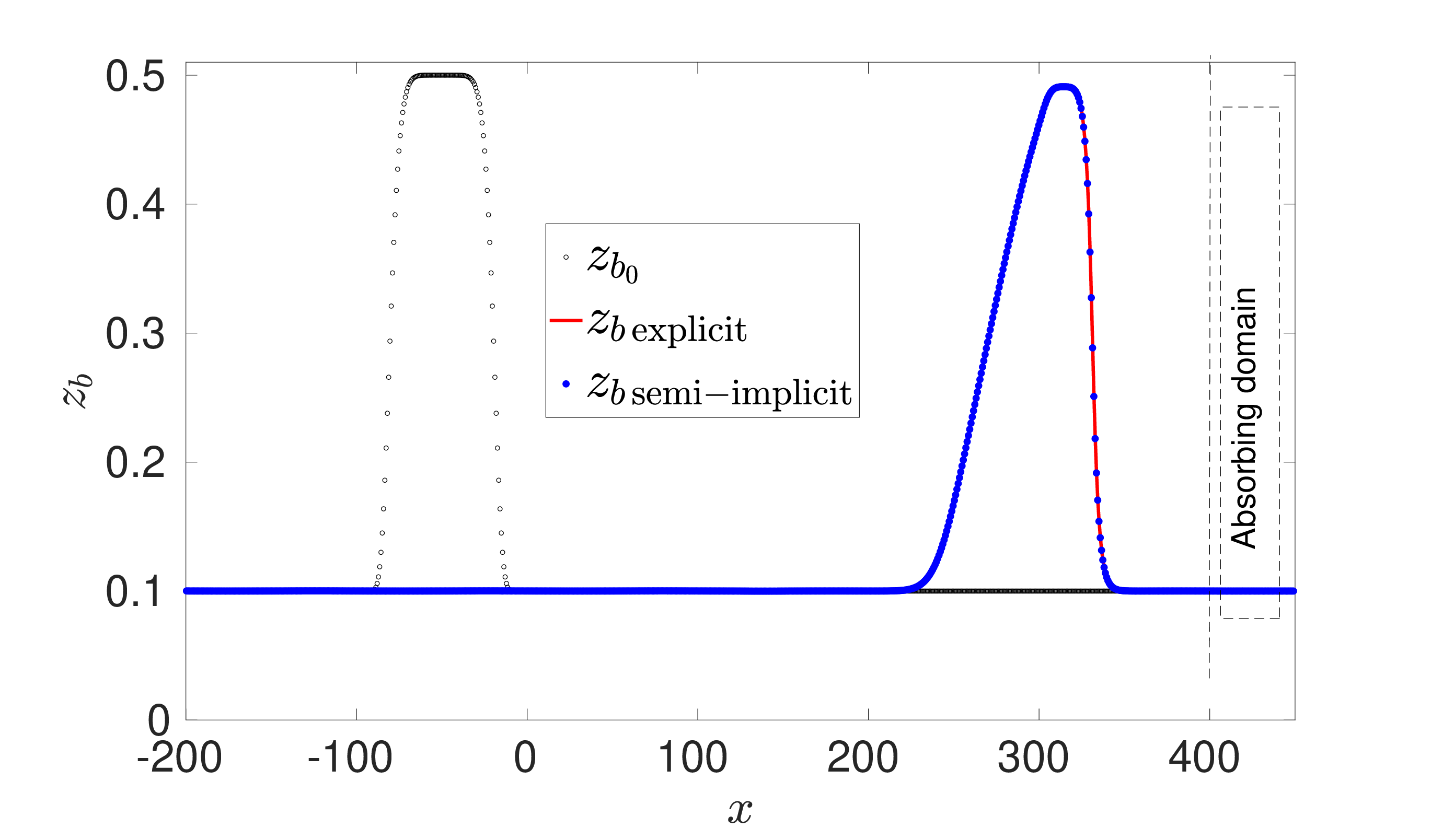}
      \caption{Test \ref{sssec:Ex:Waves}. Numerical solutions for $\eta$ and $z_b$ obtained with third-order semi-implicit and explicit scheme on a $800$ uniform mesh at final time $t_{\rm final}=5000\ s;$ where CFL$_{\rm IMEX} = 6$ and such that the MCFL$\leq0.4$; and CFL$_{\rm explicit} = 0.45$. An additional absorbing domain $[x_b,x_c] = [400,450]$ has been adopted to avoid wave reflections due to the boundary condition. The waves angular frequency $\omega$ is $0.7$ s$^{-1}$. }
     \label{Exp:Waves:low}
\end{figure}
%-------------
\begin{itemize}
    \item Initially, the explicit and semi-implicit schemes were compared both qualitatively and quantitatively, with a wave frequency set such that the spatial and temporal discretization was sufficient to correctly capture the waves, even if in the semi-implicit time discretization the CFL is larger than 1;
    \item Subsequently, the wave frequency was increased, and numerical solutions were computed for both the explicit and semi-implicit schemes. For the explicit solution, the space-time discretization ensured that all waves were perfectly captured adopting a sufficiently fine-mesh, whereas the semi-implicit solution was obtained using a coarse discretization in space and time.
\end{itemize}
These choices were made to rigorously evaluate both the accuracy and computational efficiency of the semi-implicit scheme.

%-------------
\begin{figure}[!ht]
     \centering
     \begin{subfigure}[b]{0.48\textwidth}
         \centering
         \includegraphics[width=\textwidth]{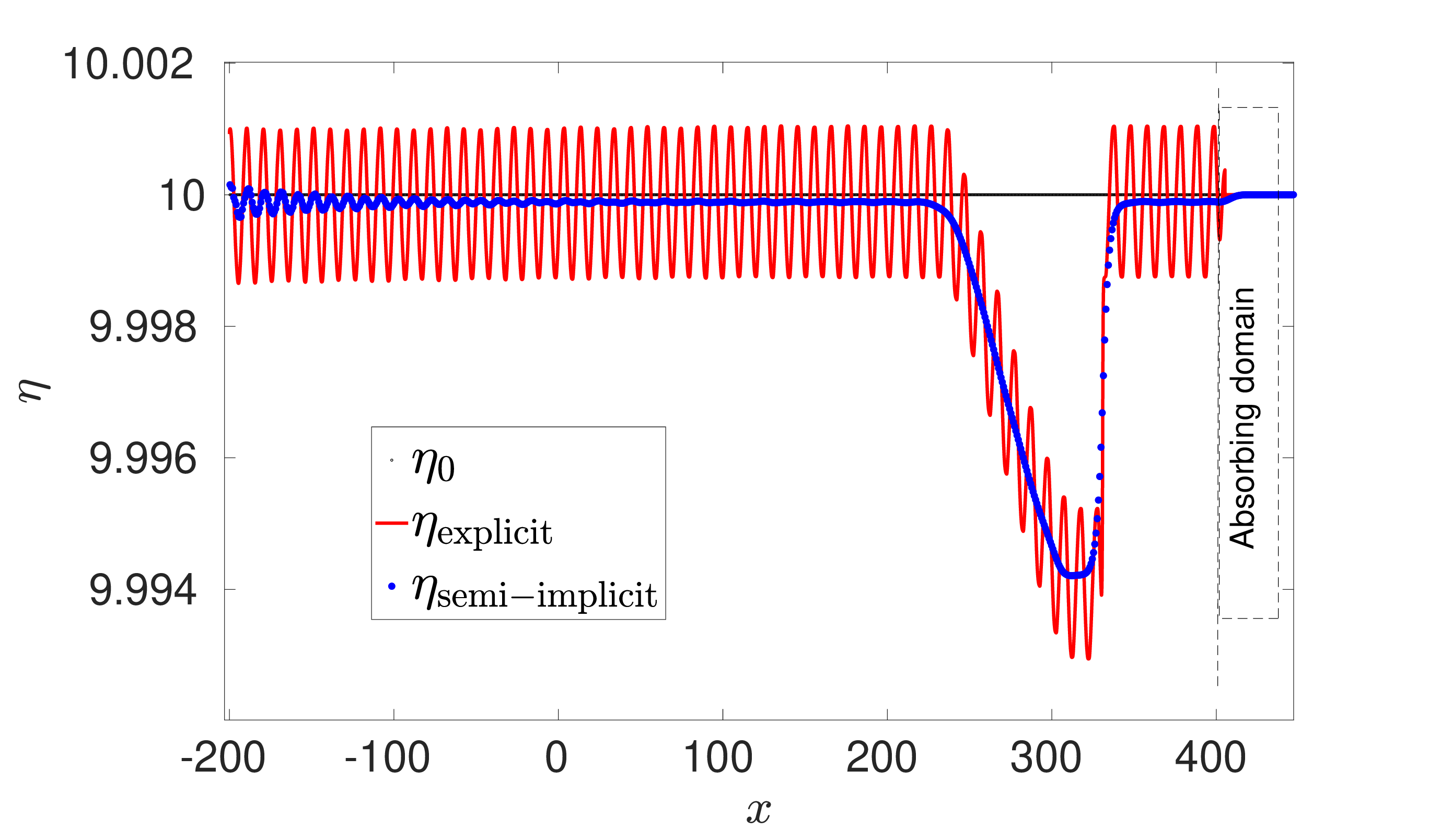}
         \caption{$\eta$}
         \label{Exp:Waves:eta:high}
     \end{subfigure}
     \hfill
     \begin{subfigure}[b]{0.48\textwidth}
         \centering
         \includegraphics[width=\textwidth]{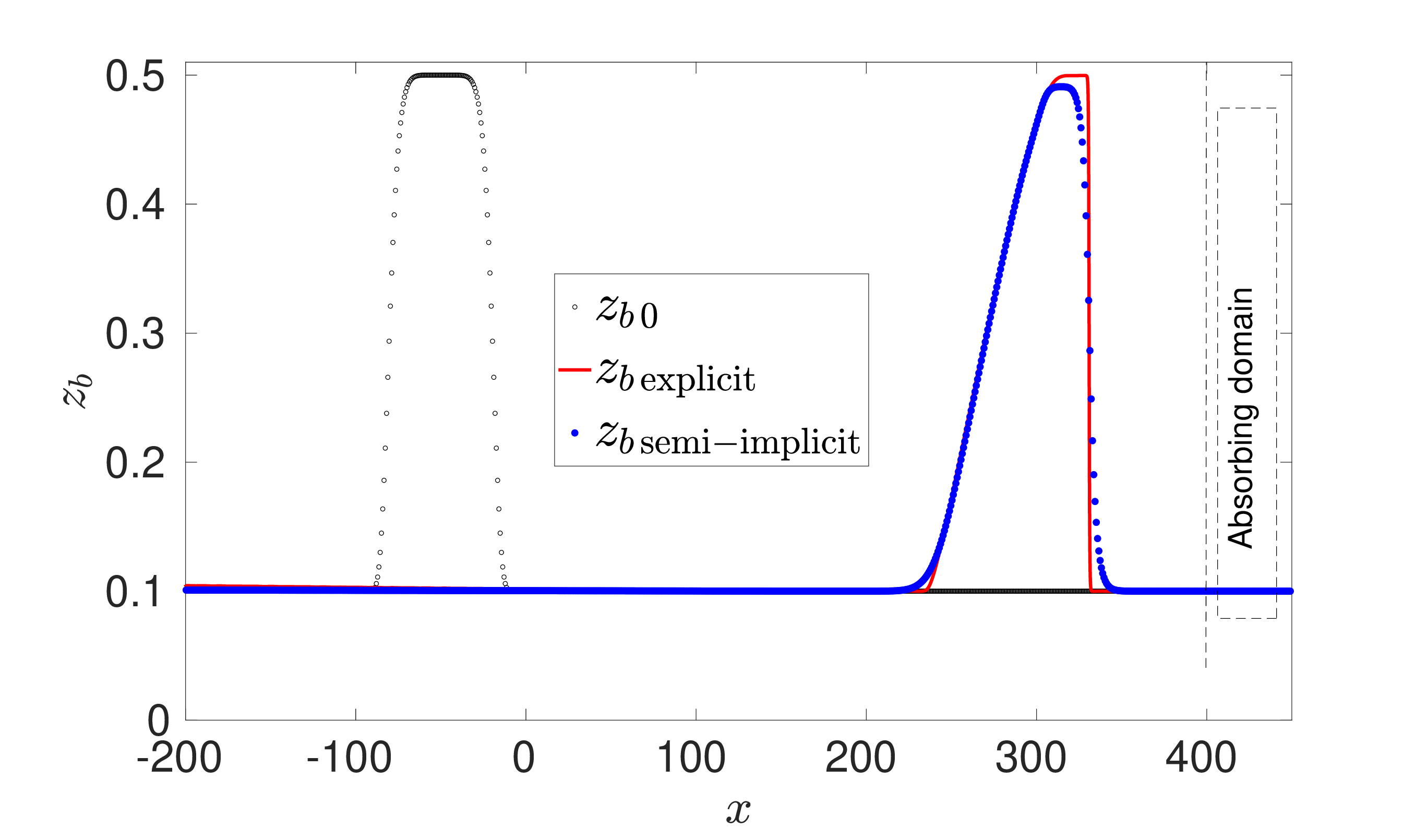}
         \caption{$z_b$}
         \label{Exp:Waves:zb:high}
     \end{subfigure}
     \caption{Test \ref{sssec:Ex:Waves}. Numerical solutions for $\eta$ and $z_b$ obtained with third-order semi-implicit and explicit scheme on a, respectively, $800$ and $10000$ uniform mesh at final time $t_{\rm final}=5000\ s;$ where CFL$_{\rm IMEX} = 6$ and such that the MCFL$=0.4$; and CFL$_{\rm EXPLICIT} = 0.45$. An additional absorbing domain $[x_b,x_c] = [400,450]$ has been adopted to avoid wave reflections due to the boundary condition. The waves frequency is $7$  s$^{-1}$.  %\giovanni{Questo esempio è il migliore dell'articolo!} \ema{Sono pienamente d'accordo.}
     }    \label{Exp:Waves:high}
\end{figure}
%--------------

For this purpose, the conditions of the test are defined as follows: the spatial domain is set as $[x_a,x_b] = [-200, 400]$ the absorption domain $[x_b,x_c] = [400,450]$ to prevent wave reflection due to the boundary condition; the CFL number at the initial condition is fixed to $4$; a flat bottom topography is considered with $b(x) \equiv 0.01 \, \text{m}$; and the parameters are $A_g = 0.1$, $m_g = 3$, and $\xi = 1/(1 - \rho_0)$ with $\rho_0 = 0.2$. The initial condition is given by:

\begin{equation}
    \label{IC_waves} 
    W_0(x) = \begin{bmatrix}
    	\eta_0(x) \\ q_0(x) \\ z_{b_0}(x)
    	\end{bmatrix}
    =
    \begin{bmatrix}
        10 \, \text{m} \\ 
        10 \, \text{m}^2/\text{s} \\ 
        0.1 + 2e^{-x^8/10^{14}} \, \text{m}
    \end{bmatrix}.
\end{equation}
The boundary conditions are imposed as:
\begin{equation}
    \label{BC_waves}
    \begin{bmatrix}
    u(x_a,t) \\
    h(x_a,t)
    \end{bmatrix} = 
    \begin{bmatrix}
    u_0 + \mathcal{A} \sin(x_a - \omega t) \\
    \frac{q(x_a,0) + q_b(x_a,0)}{u(x_a,t)} - \xi A_g u(x_a,t)^{m_g-1}
    \end{bmatrix},
\end{equation}
where $\mathcal{A} = 0.001$ and $\omega\in\{0.7,7\}.$

Figures~\ref{Exp:Waves:low}-\ref{Exp:Waves:high} present the numerical solutions for the sediment and free-surface evolution obtained using the third-order explicit and semi-implicit schemes, with wave angular frequencies $\omega= 0.7$ s$^{-1}$, and $\omega=7$ s$^{-1}$, respectively. It is important to highlight that, $\omega = 0.7$ s$^{-1}$, a mesh with 800 nodes is sufficient to accurately capture the surface waves, whereas when $\omega = 7$ s$^{-1}$, the explicit scheme adopts 10000 nodes to get a proper wave resolution. Nonetheless, the semi-implicit scheme, applied in both cases on a 800 uniform mesh, successfully captures the sediment evolution, despite being more dissipative compared to the explicit solution. Moreover, a CFL$_{\rm IMEX} = 6$ was adopted for the semi-implicit scheme, compared to 0.45 for the explicit one.

\section{Conclusions}\label{se:conclusions}
In this paper, a third-order semi-implicit scheme for the Saint Venant and Saint-Venant-Exner models has been introduced. To our knowledge, this is the first semi-implicit scheme following the technique of \cite{Casulli1992} with order $s>2$. A key ingredient is the combination of CWENO reconstructions and a centered third-order polynomial preserving the average of the free surface to implicitly discretize the free surface gradient in the momentum conservation equation, as explained in Subsection \ref{Sec:space_reconst}. After, a third-order IMEX time discretization is considered. The proposed scheme is well balanced for \emph{lake-at-rest} states, which has been proven both analytically and numerically. Absorbing boundary conditions to impose an outflow condition have also been used. The high performance of this method has been illustrated in the numerical tests section, where several test cases with/without sediment layer have been considered. Among others, an accuracy test have been performed. The results of the semi-implicit method have been also compared with a novel time-dependent semi-analytical solution introduced in Subsection \ref{subsec:semi-analitical-sol-SVE}, and the results of an explicit version of the proposed scheme. In the future, it would be interesting to apply this method to enhanced sediment models as those in \cite{GarresDiaz2022}, including also gravitational effects.

\section*{Acknowledgments}
This research has been partially supported by Junta de Andalucía research project ProyExcel\_00525 and by the European Union - NextGenerationEU program and by grant PID2022-137637NB-C22 funded by  MCIN/AEI/10.13039/501100011033 and ``ERDF A way of making Europe''.
This research has received funding from the European Union’s NextGenerationUE – Project: Centro Nazionale HPC, Big Data e Quantum Computing, “Spoke 1” (No. CUP E63C22001000006). E. Macca was partially supported by: GNCS No. CUP E53C23001670001 Research Project “Metodi numerici per le dinamiche incerte”; GNCS No. CUP E53C24001950001 Research Project "Soluzioni Innovative per Sistemi Complessi: Metodi Numerici e Approcci Multiscala"; PRIN 2022 PNRR “FIN4GEO: Forward and Inverse Numerical Modeling of hydrothermal systems in volcanic regions with application to geothermal energy exploitation”, No. P2022BNB97; PRIN 2022 “Efficient numerical schemes and optimal control methods for time-dependent partial differential equations”, No. 2022N9BM3N - Finanziato dall’Unione europea - Next Generation EU – CUP: E53D23005830006. . E. Macca and G. Russo would like to thank the Italian Ministry of Instruction, University and Research (MIUR) to support this research with funds coming from PRIN Project 2022  (2022KA3JBA, entitled “Advanced numerical methods for time dependent parametric partial differential equations and applications”). E. Macca and G. Russo are members of the INdAM Research group GNCS.

\appendix
%\bigskip 

\section*{\bf Appendix}
\section{Semi-analytical solution: Scalar PDE}\label{App_A}
In this section, we present two semi-analytical solutions for the Exner system with the Grass formula that are obtained by solving the scalar equation corresponding to the sediment evolution under the hypothesis of a steady flow surrounding the sediment. 

\subsection{Type 1}
A semi-analytical solution of the Exner model \eqref{Ex_sis} is derived by assuming a weak interaction between the fluid and the sediment layer. In the Grass model \eqref{sediment_equation}, this assumption holds when $A_g$ is a small number (e.g., $A_g = 10^{-3}$), implying that the morphodynamic timescale is much larger than the hydrodynamic timescale. A second assumption is that the water free surface and the water discharge are nearly constant. We remark that, although the Grass formula is adopted here, this approach is also valid for general definitions of the solid transport discharge (see \cite{GarresDiaz2022}).

Then, let $\eta_0,q_0\in\mathbb{R}$ denote the constant free surface level and the water discharge. Neglecting the effect of the friction term, the sediment layer of the Exner model \eqref{Ex_sis} can be computed as the solution of the scalar partial differential equation:
\begin{equation}\label{svesemianalytical}
\partial_t z_b + \xi A_g\partial_x \left(   \frac{q_0^{m_g}}{(\eta_0-z_b)^{m_g}}\right)=0.
\end{equation}
with initial condition $z(x,t) = z_0(x)$ a known function.

\subsection{Type 2} \label{App_A_2}
This case is more complex than the previous one. 
Here, quasi-stationary state, which is not neccessary constant, will be considered for the hydrodynamic variables. Again, we assume $A_g \ll 1$ (weak coupling), when the motion of the sediment occurs on a much longer timescale than that of surface waves. Consequently, surface waves propagate over a bathymetry defined by the bottom and the sediment, which is nearly constant over time. Thus, to detect the slow motion of the sediment, a reasonable approximation involves monitoring the sediment motion in a sequence of quasi-stationary states. This is achieved by setting the time derivative to zero in the first two equations of the Exner model \eqref{Ex_sis}. Our starting point is therefore to neglect the time derivative of $\eta$ and $q$ in the first two equations, corresponding to stationary flow when the sediment is immobile.
\begin{equation}
    \label{Start_point}
    \begin{cases}
        (q+q_b)_x = 0\\
        (qu)_x + gh(h + z_b + b)_x = 0\\
        (z_b)_t + (q_b)_x = 0
    \end{cases}
\end{equation}

We shall use the first two equations to express all quantities $\eta$, $q$, and $z_b$ in terms of $u$. From the first equation of \eqref{Start_point} we get:
\begin{equation}
    \label{q+q_b=Q}
    q + q_b = Q,
\end{equation} 
and assuming $u > 0$,
\begin{equation}
    \label{h}
    h = \frac{Q}{u} - \xi A_g u^{m_g-1},
\end{equation}
where, for simplicity a net flux is assumed, i.e., $Q > 0$.

Dividing the second equation by $h$, assuming $h$ is non-zero (as without water flux there would be no sediment flux, so $q$ and hence $h$ are non-zero), we obtain:
\begin{equation}
    \label{2_mod}
    \frac{u}{q}(qu)_x + g(h+z_b+b)_x=0.
\end{equation}

Let define $G(u)$ a solution of the following ODE 
\begin{equation}
    \label{G_x}
    \frac{\partial G}{\partial x} = \frac{u}{q}(qu)_x.
\end{equation}
Since $\frac{\partial G}{\partial x} = \frac{dG}{d u}u_x$, it follows that $\frac{u}{q}(qu)_x = G'u_x$, and therefore,
\begin{equation}
    G'(u)u_x = \frac{u}{q}(q'u + q)u_x \quad \Rightarrow \quad G'(u) = \frac{q'}{q}u^2 + u.
\end{equation}

As a consequence of the first equation in system \eqref{Start_point}, $q' = -q_b' = -m_g \xi A_g u^{m_g-1}$, thus $G'$ takes the form:
\begin{equation}
    \label{G'}
    G'(u) = \frac{Q - (m_g + 1) \xi A_g u^{m_g}}{Q - \xi A_g u^{m_g}}u.
\end{equation}

From equation \eqref{2_mod} we obtain $G + g(h + z_b + b) = C$, where $C$ is a constant, hence $z_b = \frac{(C - G)}{g} - h - b$. Furthermore, from the third equation of \eqref{Start_point} we have $z_b' u_t + q_b' u_x = 0$, which implies:
\begin{equation}
    \label{z_b'}
    z_b' = -\frac{G'}{g} + \frac{Q + (m_g - 1) \xi A_g u^{m_g}}{u^2}.
\end{equation}

Finally, combining all the results obtained, we derive the non-linear scalar equation:
\begin{equation}
    \label{Scal_eq}
    u_t + \lambda(u)u_x = 0,
\end{equation}
where
\begin{equation}
    \label{lambda}
    \lambda(u) = \frac{m_g \xi A_g u^{m_g-1}}{ \frac{Q + (m_g - 1) \xi A_g u^{m_g}}{u^2} - \frac{G'(u)}{g}}.
\end{equation}

Equations \eqref{Scal_eq} and \eqref{lambda}, together with equations \eqref{q+q_b=Q}, \eqref{h}, and \eqref{z_b'}, provide a solution for sediment transport in the quasi-static approximation. This solution will lose validity upon shock formation.

\section{CWENO coefficients}\label{App_B}
For completeness, this section will briefly review the CWENO(2,3) parameters used in this study \cite{levy1999central}.
The goal of CWENO(2,3) is to recontruct a piecewise smooth function
$u(x)$ in each cell $i$, $i=1,\ldots,N$, from its cell averages. 
In each cell $i$ it makes use of the cell averages in cells $(i-1,i,i+1)$. 
Let be $\Omega = \{L,C,R\}$. The reconstruction in cell $i$ is given by 
\[
    R_i(x) = \sum_{r\in \Omega} w^rP^r(x)
\]
where 
for all $r\in\Omega$ 
\begin{equation}
\omega^r = \frac{\alpha^r}{\sum_{r\in\Omega} \alpha^s}, \quad \quad \alpha^r = \frac{d^r}{(I[P^r] + \tau)^2}
\end{equation}
in which, $\tau$ is a small number to prevent too small denominators (in our case we used $\tau = \ema{2.2204e-16}$); $d^C = 0.5$ and $d^R = d^L = (1-d^C)/2$.
Here $P^L$ and $P^R$ are the first degree polynomials that match the cell average of a function $u$ we want to reconstruct in cell $i$, respectively in cells $(i-1,i)$ and in cells $(i,i+1)$. The polynomial $P_{\rm opt}$ is the second degree one that matches the cell averages of $u$ in cells $(i-1,i,i+1)$, while polynomial $P^C$ is computed by the relation $P_{\rm opt} = d^L P^L + d^C P^C + d^R P^R$. The smoothness indicators are defined as:
\begin{equation}
I[P^r] := \sum_{l=1}^{2}\int_{x_{i -\frac{1}{2}}}^{x_{i+\frac{1}{2}}} \Delta x^{2l-1}\left( \frac{\partial^l P^r(x)}{\partial^l x}\right)^2dx, \quad r\in\Omega.
\end{equation}
Notice that $P_L$ and $P_R$ are polynomials of degree 1, therefore their second derivative is zero. 

\section{Fully-Discrete high-order scheme}\label{App_C}
To present the fully–discrete semi–implicit IMEX–RK scheme, we now detail the time integration at each implicit–explicit stage. Building upon the semi–discrete first–order formulation (\ref{eta_1-3},\ref{q_1-3}) and a generic double Butcher tableau, the $\ell$-th stage of the high–order semi–implicit method can be written as:

\begin{comment}\giovanni{Nelle formule seguenti preferirei usare una sommatoria anzicché la somma per elencazione dei termini. La formula dovrebbe risultare un pochino più compatta. In questo modo non dobbiamo immaginare quale sia la dimensione corretta delle parentesi quadre, visto che il numero parentesi quadre che si aprono su una riga e si chiudono sulla successiva sarà ridotto drasticamente.
Infine, mi pare che manchino i termini con $a^I_{\ell,j}$ con $j<\ell$ nelle ultime due righe della (C.4). Per il resto mi sembra che vada bene. Stesso discorso sulle sommatorie vale per la soluzione numerica. }
\end{comment}
\begin{align}
    \label{l-stages}
    & \eta_{E,i}^{(\ell)} = \eta_i^n \\
    & q_{E,i+\ha}^{(\ell)} = q_{i+\ha}^n -  \frac{\dt}{\dx}\sum_{j=1}^{\ell-1}a_{\ell,j}^E\Bigl(\F_{i+1}(U_E^{(j)}) - \F_{i}(U_E^{(j)})\Bigr) \\
    & \eta_{I,i}^{(\ell)} = \eta_i^n -  \frac{\dt}{\dx}\sum_{j=1}^\ell a_{\ell,j}^I\Bigl(q_{I,i+\ha}^{(j)} - q_{I,i-\ha}^{(j)}\Bigr) \\ 
    & q_{I,i+\ha}^{(\ell)} = q_{i+\ha}^n - \frac{\dt}{\dx}\sum_{j=1}^{\ell-1}a_{\ell,j}^I\Bigl(\F_{i+1}(U_E^{(j)}) - \F_{i}(U_E^{(j)})\Bigr) +  \\
    & \quad \qquad - g\dt\sum_{j=1}^\ell a_{\ell,j}^I \Bigl[ \Bigl(\xi_{E,i+\ha,i-1}^{L,(j)} - \xi_{E,i+\ha,i-1}^{R,(j)}\Bigr)\eta_{I,i-1}^{(j)} + \Bigl(\xi_{E,i+\ha,i}^{L,(j)} - \xi_{E,i+\ha,i}^{R,(j)}\Bigr)\eta_{I,i}^{(j)}  + \nonumber \\ 
    & \quad\qquad \qquad\qquad+ \Bigl(\xi_{E,i+\ha,i+1}^{L,(j)} - \xi_{E,i+\ha,i+1}^{R,(j)}\Bigr)\eta_{I,i+1}^{(j)} + \Bigl(\xi_{E,i+\ha,i+2}^{L,(j)} - \xi_{E,i+\ha,i+2}^{R,(j)}\Bigr)\eta_{I,i+2}^{(j)} \Bigr]\nonumber
\end{align}
for $\ell = 1,\ldots,s$, where $s$ is the total number of stages in the tableau. The subscript $_E$ in the $\xi$-coefficients denotes the explicit reconstruction of $h$, distinguishing it from the implicit discretization of $\partial_x\eta\,$.

Finally, the numerical solution is:
\begin{align}    
    & \eta^{n+1}_i = \eta_i^n - \frac{\dt}{\dx}\sum_{\ell = 1}^s b_\ell\Bigl(q_{I,i+\ha}^{(\ell)} - q_{I,i+\ha}^{(\ell)}\Bigr)  \\ 
    & q^{n+1}_{i+\ha} = q_{i+\ha}^n - \frac{\dt}{\dx}\sum_{\ell = 1}^s b_\ell\Bigl(\F_{i+1}(U_E^{(\ell)}) - \F_{i}(U_E^{(\ell)})\Bigr) + \label{num_sol_imex}\\
    & \quad\quad\; - g\dt\sum_{\ell = 1}^s b_\ell\Bigl[\Bigl(\xi_{E,i+\ha,i-1}^{L,(\ell)} - \xi_{E,i+\ha,i-1}^{R,(\ell)}\Bigr)\eta_{I,i-1}^{(\ell)} + \Bigl(\xi_{E,i+\ha,i}^{L,(\ell)} - \xi_{E,i+\ha,i}^{R,(\ell)}\Bigr)\eta_{I,i}^{(\ell)} \Bigr.  + \nonumber \\ 
    & \quad\qquad\qquad\qquad\quad + \Bigl.\Bigl(\xi_{E,i+\ha,i+1}^{L,(\ell)} - \xi_{E,i+\ha,i+1}^{R,(\ell)}\Bigr)\eta_{I,i+1}^{(\ell)} + \Bigl(\xi_{E,i+\ha,i+2}^{L,(\ell)} - \xi_{E,i+\ha,i+2}^{R,(\ell)}\Bigr)\eta_{I,i+2}^{(\ell)} \Bigr] \nonumber
\end{align}

\bibliographystyle{ieeetr}
\bibliography{biblio}

\end{document}